\documentclass[smallextended]{svjour3}       
\smartqed  

\usepackage{amssymb,amsmath,amsfonts,amsxtra}
\usepackage{bm,mathrsfs,braket}
\usepackage{mathtools,nccmath,bbm,upgreek}
\usepackage{dsfont}

\usepackage{graphicx,epsfig,epstopdf}
\usepackage{tikz}
\usetikzlibrary{arrows,calc,decorations.pathreplacing,arrows.meta,positioning}
\graphicspath{{./pics/}}

\usepackage{booktabs,array,multirow,threeparttable}

\usepackage{algorithm,algorithmic}
\usepackage{verbatim,comment}

\usepackage{caption}
\usepackage{subcaption}

\usepackage[colorlinks,linktocpage,linkcolor=blue]{hyperref}

\usepackage{url,times,marvosym,eurosym}
\usepackage{float,enumerate}

\allowdisplaybreaks

\renewcommand{\d}{{\rm d}}

\newtheorem{assumption}{Assumption}
\numberwithin{equation}{section}

\def\d{{\mathrm d}}

\def\0{{\textbf{0}}}

\def\gl{\gamma_\mathrm{lin}}

\def\<{{\langle }}
\def\>{{\rangle }}
\def\II{(\Om)}
\def\dH#1{\dot H^{#1}(\Omega)}

\def\Dom{\text{Dom}}

\newcommand{\fy}{\varphi}

\def\II{(\Omega)}

\begin{document}

\title{Linear Convergence of Parareal Algorithm for Semilinear Parabolic Equations\thanks{G. Li is supported by Hong Kong Research Grants Council (Project 17317022). S. Wu is supported by National Natural Science Foundation of China (12171080,
12292982) and by Natural Science Foundation of Jilin Province (JC010284408).
Z. Zhou is supported by National Natural Science Foundation of China (12422117, 12426312).}}

\titlerunning{Linear Convergence of Parareal Algorithm for Semilinear Parabolic Equations}        

\author{Guanglian Li \and Qingle Lin \and  Shu-lin Wu \and Zhi Zhou}

\authorrunning{G. Li, Q. Lin, S-L. Wu, and Z. Zhou} 

\institute{G. Li \at
              Department of Mathematics, The University of Hong Kong, Pok Fu
              Lam Road, Hong Kong SAR, China. \\
              \email{lotusli@maths.hku.hk}           
           \and
          Q. Lin, Z. Zhou \at
           Department of Applied Mathematics, The Hong Kong Polytechnic University, Kowloon, Hong Kong, P. R. China.\\
          \email{qingle.lin@connect.polyu.hk, zhizhou@polyu.edu.hk}
           \and
          S-L. Wu \at
           College of Mathematics and Statistics, Northeast Normal University, Changchun, China.\\
           \email{wusl393@nenu.edu.cn}
}

\date{Received: date / Accepted: date}

\maketitle

\begin{abstract}
Long-time simulations of evolution equations present substantial computational challenges due to the inherently sequential nature of conventional time-stepping schemes. The parareal method, a leading parallel-in-time (PinT) algorithm, offers a promising approach to overcome the challenge by introducing concurrency in the time domain. While its convergence theory is well-established for linear problems, extending the theory to nonlinear problems, particularly when the problem data have only limited regularity, remains a significant challenge. 
In this work, we provide the convergence analysis of the parareal algorithm for solving semilinear parabolic equations with an $H^2$ initial data. We employ stable rational approximations and first-order linearization as coarse propagators,  establish the linear convergence of the parareal algorithm and provide a sharp estimate for the convergence factor. The analysis combines the error-splitting technique from the superlinear convergence analysis of the parareal method, a refined linear convergence theory for linear parabolic equations, and \textsl{a priori} error estimates that are optimal with respect to the regularity of the problem data. The analysis shows the close connection between the convergence behavior of nonlinear models and their linear counterparts. Numerical experiments fully support the theoretical findings.

\vskip5pt

\keywords{parareal algorithm \and semilinear parabolic equation\and convergence rate \and single step coarse solver \and limited-regularity data}
\end{abstract}
 
 \section{Introduction}
 
 In this work, we aim to establish a sharp convergence estimate for the parareal algorithm for semilinear parabolic problems.
 Let $T > 0$ be a fixed terminal time and $\Omega\subset \mathbb{R}^d$ ($d=1,2,3$) be a convex polygonal domain. Let $A$ be a self-adjoint second-order elliptic operator with $\Dom(A) = H^2(\Omega)\cap H_0^1(\Omega)$, $u_0 \in \Dom(A)$ the initial data and $f$  a given nonlinear reaction term. Consider the following semilinear parabolic problem for $u \in C((0,T]; \Dom(A)) \cap C([0,T]; L^2(\Omega))$:
 \begin{equation}\label{eqn:pde}
 	\left\{
 	\begin{aligned}
 		u'(t) + A u(t) &= f(u(t)), \quad 0 < t < T, \\
 		u(0) &= u_0.
 	\end{aligned}
 	\right.
 \end{equation}
 The numerical solution of the evolution model  \eqref{eqn:pde} often employs time-stepping schemes. These schemes typically advance sequentially, step by step, which can lead to significant computational bottlenecks, particularly for long-time simulations. The past few decades have witnessed growing interest in Parallel-in-Time (PinT) algorithms that enjoy parallelism in the time domain. Comprehensive discussions on PinT methods can be found in the monograph \cite{martin-pint} and the reviews \cite{GWZ:Acta,gander201550}.
 
 In the seminal work \cite{LionsMadayTurinici:2001}, Lions, Maday, and Turinici proposed the parareal algorithm for solving evolution models in a parallel-in-time manner. It works by dividing the time domain into subintervals, using a computationally cheap coarse propagator (CP) to generate an initial estimate, and then refining the solution in parallel within each subinterval using an accurate but expensive fine propagator (FP). The CP provides a rough, low-resolution approximation of the solution, while the FP solves the problem with high accuracy.  The parareal method has become a foundational tool to achieve more speedup particularly when parallelization  in space saturates.    
 It has been successfully applied to a wide range of problems, including option pricing \cite{BalMaday:2002,PagesPironneau:2016}, multiscale models \cite{Engblom:2009,ArielKim:2016}, stochastic models \cite{Bossuyt:2025,BrehierWang:2020}, nonlocal models \cite{XuHesthaven:2015,Li:2024}, and optimal control problems \cite{Fang:2022,GanderKwok:2020} etc. 
 
 
 The convergence analysis of the parareal algorithm for linear PDEs, particularly linear parabolic equations, has been extensively studied \cite{GanderVandewalle:2007,WuZhou:2015,wu2015convergence2,Dobrev:2017,hessenthaler2020multilevel,legoll2013micro}. When the initial condition has only limited regularity, existing results indicate that the behavior of parareal becomes worse \cite{dai2013stable} and may converge linearly with the iteration number, where the linear convergence factor highly relies on the choice of CP and FP \cite{WuZhou:2015,jin2025optimizing}. This observation has motivated recent efforts to optimize the CP, either based on the specific model \cite{FriedhoffSouthworth:2021,DeSterck:2021} or relative to a fixed FP \cite{jin2025optimizing}. 
 While the convergence frameworks for linear problems have shed valuable insights into the algorithm's strengths and limitations, a fully rigorous convergence analysis for nonlinear parabolic problems remains unavailable. There are two primary strategies for deriving the convergence results for nonlinear ordinary differential equations (ODEs). The first approach relies on assumptions regarding the stability and the order condition of the CP~\cite{gander2008nonlinear,pentland2023error,gander2019new}. Under these assumptions, the parareal error is decomposed into a stability error from the current iteration and an approximation error from the previous iteration, allowing them to be estimated independently.
 This approach has been extended to nonlinear parabolic problems under strong regularity conditions on the exact solution \cite{bal2005convergence,dai2013stable}. However, the decoupled treatment significantly loosens the error bound when applied to nonlinear parabolic problems with limited regularity data, yielding an estimate that fails to recover the sharp linear bound when the nonlinear term vanishes. The second approach depends on the one-sided Lipschitz assumption on the nonlinear function \cite{gander2013analysis}. Rather than bounding the difference between the CPs and the FPs from the previous iteration directly, this method separately estimates the deviations of the CPs and FPs, which amplifies the error bound, particularly when the nonlinearity is weak or absent. To the best of our knowledge, the bounds for nonlinear problems in these works cannot recover that for linear problems when the nonlinear term vanishes.
 In the special case where the CP is chosen as a linearized exponential integrator, the analysis becomes more tractable, and the convergence can be established with relative simplicity. Brehier and Wang \cite{BrehierWang:2020} demonstrated the convergence of the parareal method for solving semilinear stochastic PDEs under this choice of CP. However, their approach does not extend to standard linearized time-stepping schemes, which are more commonly employed as coarse propagators in practical applications. These challenges highlight the need for further investigation into the convergence of the parareal algorithm for semilinear parabolic problems.

 In this work, we study convergence properties of the parareal algorithm for  semilinear parabolic equations with initial data in the domain $ \Dom(A) $. Previous studies have reported that, for nonlinear problems, the convergence behavior of the parareal algorithm often resembles that of linear problems when the coarse propagator (CP) is sufficiently well-resolved \cite{jin2025optimizing}. However, to the best of our knowledge, no theoretical guarantees have been provided to substantiate the empirical observation. In Theorem \ref{thm:main_thm_L2}, we establish that when the CP has sufficient resolution, the convergence factor of the parareal iteration is given by $ \gamma_\text{lin} + {C_f} \Delta T |\ln \Delta T|$, where $ \gamma_\text{lin} $ is the convergence factor for the corresponding linear parabolic equation, $ \Delta T $ is the time step size of the CP, and {$ C_f $} is a constant that depends on the nonlinearity of the problem (e.g., boundedness and Lipschitz constant, etc.). In case that the source term $f$ is independent of $u$, the constant {$ C_f $} vanishes, and the convergence factor reduces to $ \gamma_\text{lin} $. Furthermore, when the CP achieves high accuracy, i.e., as $ \Delta T \to 0 $, the convergence factor also approaches $ \gamma_\text{lin} $. 
 This result is derived by combining the error-splitting framework from the superlinear convergence analysis of the parareal method \cite{GanderVandewalle:2007}, a refined linear convergence theory for linear parabolic equations \cite{GanderVandewalle:2007,WuZhou:2015}, and nonsmooth data error analysis \cite{thomee2007galerkin}. The analysis does not rely on any specific structure of the CPs and is more broadly applicable than existing approaches. The numerical experiments fully support the theoretical findings.
 
 The rest of the paper is organized as follows. In Section \ref{sec:parareal}, we describe the parareal algorithm. In Section \ref{sec:linear conv}, we prove the linear convergence. In Section \ref{sec:numerical}, we numerically illustrate the theoretical findings. Throughout,  $(\cdot,\cdot)$ denotes the $L^2(\Omega)$ inner product or duality pairing between $H^{-1}(\Omega)$ and $H_0^1(\Omega)$.   Let $\{(\lambda_j, \fy_j)\}_{j=1}^\infty$ denote the eigenpairs of $A$, where $\{\fy_j\}_{j=1}^\infty$ forms an orthonormal basis in $L^2(\Omega)$. 
 We denote by $\dH q$ the Hilbert space induced by the norm
 $\|v\|_{\dH q}^2:=\|A^\frac{q}{2}v\|_{L^2\II}^2=\sum_{j=1}^{\infty}\lambda_j^q ( v,\fy_j )^2$ for $ q\ge {-2}$. 
 For $q\in[-2,0)$, the negative norm is equivalent to the norm of the dual space of $\dot H^{-q}(\Omega)$.
 $\mathcal{L} (L^2 (\Omega ))$ denotes the space of linear bounded operators from $L^2 (\Omega) $ to $L^2 (\Omega )$, with the operator norm $\| \cdot \|_{\mathcal{L}(L^2 (\Omega))}$. 
 
 \section{Preliminaries}\label{sec:parareal}
 
 To discretize problem \eqref{eqn:pde} in time, we divide the time interval $(0,T)$ into $N$ equal
 subintervals, each of length ${\Delta t} = T/N$. Let $\Delta T = J \Delta t$ ($J\in \mathbb{N}$) be the coarse step size, $N_c = T/\Delta T\in \mathbb{N}$, and  $T_n = n\Delta T$. 
 We present the parareal algorithm for problem \eqref{eqn:pde} in Algorithm \ref{alg:para}. In practice, the CP $\mathcal{G}$ is often an inexpensive low-order  method, whereas the FP $\mathcal{F}$ is a high-order but expensive time integrator. Given any $v\in L^2\II$, the CP, denoted by $\mathcal{G}_{\Delta T}(t,v)$, evolves the initial state $v$ from time $t$ to $t+\Delta T$, and the FP is denoted by $\mathcal{F}_{\Delta T}(t,v)$.  For uniform time meshes, we can write  
 \begin{equation*} 
 	\mathcal{G}_{\Delta T} (t,v) =: \mathcal{G}(v)\quad \text{and}\quad \mathcal{F}_{\Delta T} (t,v) =: \mathcal{F}(v).
 \end{equation*}
 Let FP be an exact solver: for $v \in L^2\II$, $\mathcal{F}(v)$ is given by $\mathcal{F}(v) = w(\Delta T)$, with $w(\Delta T)$ solving problem \eqref{eqn:pde} with the initial condition $v$ at time $\Delta T$:
 \begin{equation}\label{eqn:fp}
 	\mathcal{F}(v) := w(\Delta T) = e^{- A \Delta T} v + \int_{0}^{\Delta T} e^{ -A(\Delta T-s) } f( w(s)) \, \d s.
 \end{equation}
 Throughout, the CP in the parareal algorithm is a single step scheme: for any $v \in  L^2\II$,
 \begin{equation}\label{eqn:cp}
 	\mathcal{G}( v) =R( \Delta TA) v + \Delta TP( \Delta TA) f(v),
 \end{equation}
 where $R(s)$ and $P(s)$ are rational functions approximating $e^{-s}$ and $s^{-1}(1 - e^{-s})$, respectively {\cite{thomee2007galerkin}}. 
 
 \begin{algorithm}[hbt!]
 	\center
 	\caption{Parareal algorithm.}
 	\begin{algorithmic}[1]\label{alg:para}
 		\STATE \textbf{Initialization}:
 		Compute
 		$U^0_{n+1} = \mathcal{G}_{\Delta T}(T_{n}, U^0_{n})$ with $U_0^0 =u_0$, $n =0,1,...,N_c - 1$.
 		\FOR {$k=0,1,\ldots,K$}
 		\STATE \textbf{Parfor}: On each subinterval $[T_n,T_{n+1}]$, sequentially compute the fine correction:
 		\begin{equation*}  
 			U_{n+1}^{k} = \mathcal{F}_{\Delta T}(T_n,  U_{n}^{k}). 
 		\end{equation*}
 		\STATE Perform sequential corrections, i.e., find $U^{k+1}_{n+1}$ by       \begin{equation}\label{eqn:parareal}
 			U^{k+1}_{n+1} = \mathcal{G}_{\Delta T}(T_{n}, U^{k+1}_{n})  +  U_{n+1}^{k} -  \mathcal{G}_{\Delta T}( T_n, U^{k}_{n}), 
 		\end{equation}
 		with $U^{k+1}_0 = u_0$, for $n =  0,1,...,N_c - 1$.\vskip5pt
 		\STATE Check the stopping criterion.
 		\ENDFOR
 	\end{algorithmic}
 \end{algorithm}
 
 We make the following assumption on the nonlinear function $f$ in problem \eqref{eqn:pde}. 
 The assumption can be relaxed to a strip along the exact solution $u$ in the 
 $H^2\II$ norm. 
 \begin{assumption}\label{assum:f}
 	The function {$f: \mathbb{R} \rightarrow \mathbb{R}$} is smooth and  bounded, with its derivatives up to the third order also bounded.
 \end{assumption}
 
 We have the following estimates on $f$. The proof is given in Section \ref{sec:proof2} in the appendix.
 \begin{lemma}\label{lem:f}
 	Let the function \( f \) satisfy Assumption \ref{assum:f}. Then the following estimates hold.
 	\begin{align}
 		\left\|( f'(u) - f'(v))(h) \right\|_{\dot{H}^{-2}(\Omega)} &\leq  C \| u - v \|_{L^2(\Omega)} \| h \|_{L^2(\Omega)}, && \forall u, v \in L^2(\Omega), \, h \in L^2(\Omega), \label{eqn:basic-est1} \\
 		\left\|( f'(u) - f'(v))(h) \right\|_{L^2(\Omega)} &\leq   C \| u - v \|_{L^2(\Omega)} \| h \|_{H^2(\Omega)}, && \forall u, v \in L^2(\Omega), \, h \in H^2(\Omega), \label{eqn:basic-est2} \\
 		\left\| f(u) - f(v) \right\|_{\dot{H}^1(\Omega)} &\leq  C \| u - v \|_{H^2(\Omega)}, && \forall u, v \in H^2(\Omega). \label{eqn:basic-est3}
 	\end{align}
 	Additionally, for \( u \in H^2(\Omega) \), the estimate $\left\| f'(u) \right\|_{H^2(\Omega)} \leq  C (1 + \| u \|_{H^2(\Omega)})$ holds. 
 \end{lemma}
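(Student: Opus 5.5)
The four estimates will be proved separately, each by reducing a fractional or negative Sobolev norm to a pointwise bound via duality, Sobolev embeddings for $d\le 3$, and the bounds on $f,f',f'',f'''$ from Assumption~\ref{assum:f}.

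\emph{Estimate \eqref{eqn:basic-est1}.} I will argue by duality. Since $\dot H^{-2}(\Omega)$ is the dual of $\dot H^2(\Omega)=\Dom(A)\subset H^2(\Omega)\hookrightarrow L^\infty(\Omega)$ for $d\le 3$, for any $\phi\in\dot H^2(\Omega)$ the mean value theorem gives $|f'(u)-f'(v)|\le \|f''\|_\infty|u-v|$ pointwise. Hence
\begin{equation*}
|((f'(u)-f'(v))h,\phi)|\le \|f''\|_\infty\|u-v\|_{L^2}\|h\|_{L^2}\|\phi\|_{L^\infty}\le C\|u-v\|_{L^2}\|h\|_{L^2}\|\phi\|_{\dot H^2},
\end{equation*}
using $L^2\cdot L^2\subset L^1$ and the embedding $H^2\hookrightarrow L^\infty$. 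Taking the supremum over $\phi$ yields \eqref{eqn:basic-est1}.

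\emph{Estimate \eqref{eqn:basic-est2}.} The same pointwise bound gives directly
\begin{equation*}
\|(f'(u)-f'(v))h\|_{L^2}\le \|f''\|_\infty\|u-v\|_{L^2}\|h\|_{L^\infty}\le C\|u-v\|_{L^2}\|h\|_{H^2}.
\end{equation*}

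\emph{Estimate \eqref{eqn:basic-est3}.} This needs a genuine derivative. Since $u-v\in H^2$ vanishes on the boundary (the relevant functions lie in $\Dom(A)$), so does $f(u)-f(v)$ when $f(0)$ terms cancel, so the $\dot H^1$ norm is equivalent to $\|\nabla(f(u)-f(v))\|_{L^2}$. I will split
\begin{equation*}
\nabla(f(u)-f(v))=f'(u)\nabla(u-v)+(f'(u)-f'(v))\nabla v.
\end{equation*}
The first term is bounded by $\|f'\|_\infty\|u-v\|_{H^1}$. For the second, $|f'(u)-f'(v)|\le C|u-v|$, so by H\"older it is at most $C\|u-v\|_{L^\infty}\|\nabla v\|_{L^2}\le C\|u-v\|_{H^2}\|v\|_{H^1}$.

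\emph{Main obstacle.} The constant in \eqref{eqn:basic-est3} then depends on $\|v\|_{H^2}$. This is the delicate point: the claimed bound is uniform, which is not literally achievable for arbitrary $H^2$ data, and I expect that this is why the paper remarks that the assumption can be relaxed to a strip around $u$. I would therefore either prove the estimate with $C$ depending on a bound on $\|v\|_{H^2}$ (the functions of interest being bounded in $H^2$), or read the statement that way. The boundary-trace issue is minor: for $\dot H^1$ one needs $f(u)-f(v)\in H_0^1$, which holds since both traces vanish and $f(u)-f(v)=0$ where $u=v=0$.

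\emph{Final estimate.} Compute $D^2 f'(u)=f'''(u)\nabla u\otimes\nabla u+f''(u)D^2u$. Then
\begin{equation*}
\|f'''(u)|\nabla u|^2\|_{L^2}\le C\|\nabla u\|_{L^4}^2\le C\|u\|_{H^2}^2,
\end{equation*}
using $H^1\hookrightarrow L^4$ for $d\le3$, and $\|f''(u)D^2u\|_{L^2}\le C\|u\|_{H^2}$. Together with the lower-order terms this gives $C(1+\|u\|_{H^2}+\|u\|_{H^2}^2)$. The quadratic term is the real obstruction to the stated linear bound. As before, I would absorb it using a uniform $H^2$ bound on the functions considered, so that $\|u\|_{H^2}^2\le M\|u\|_{H^2}$; the alternative is interpolation, $\|\nabla u\|_{L^4}^2\le C\|u\|_{L^\infty}\|u\|_{H^2}$ via Gagliardo--Nirenberg. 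The interpolation route, which is cleaner, requires an $L^\infty$ bound on $u$; when $u$ is only a function of an $H^2$ solution that bound follows from the embedding, and the constant then depends on that bound.
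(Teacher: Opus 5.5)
Your proposal is correct and follows the paper's proof essentially step for step: duality with $H^2(\Omega)\hookrightarrow L^\infty(\Omega)$ for \eqref{eqn:basic-est1}, the $L^2$--$L^\infty$ H\"older bound for \eqref{eqn:basic-est2}, the product-rule splitting of $\nabla(f(u)-f(v))$ for \eqref{eqn:basic-est3}, and the chain rule with $\|\nabla u\|_{L^4(\Omega)}^2$ for the final bound, where the paper uses exactly the Gagliardo--Nirenberg route you mention. Your caveat is also right: \eqref{eqn:basic-est3} and the bound on $\|f'(u)\|_{H^2(\Omega)}$ cannot hold with a uniform $C$ (take $f=\sin$ and arguments of the form $M\psi$ with $M\to\infty$), and the paper's proof silently absorbs $\|u\|_{\dot H^1(\Omega)}$ and $\|u\|_{L^\infty(\Omega)}$ into $C$, which is harmless in its applications because the relevant $\dot H^2(\Omega)$ norms are uniformly bounded there (and your splitting, which only involves $\|v\|_{H^1(\Omega)}$, lets one place that dependence on the exact solution).
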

 
 Under Assumption \ref{assum:f}, the solution $u(t)$ is uniformly bounded in the $\dH2$ norm, if $u_0\in \dH2$. 
 \begin{lemma}\label{lem:regu_U_n}
 	Let $u$ be the solution to  \eqref{eqn:pde} with $u_0 \in \dH2$. There holds $ \| u ( t  ) \|_{\dH2} \leq C_T\| u_{0}\|_{\dH2}$.
 \end{lemma}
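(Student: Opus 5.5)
We will argue directly from the mild (Duhamel) formulation
\begin{equation*}
u(t)=e^{-tA}u_0+\int_0^t e^{-(t-s)A}f(u(s))\,\d s .
\end{equation*}
We apply $A$ to both sides and estimate the two resulting terms separately. The first term is immediate: since $e^{-tA}$ commutes with $A$ and is a contraction on $L^2(\Omega)$, we get $\|Ae^{-tA}u_0\|_{L^2(\Omega)}\le \|u_0\|_{\dH2}$.

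The difficulty is the convolution term. The crude bound $\|Ae^{-(t-s)A}\|\le C(t-s)^{-1}$ is not integrable, so we must exploit spatial regularity of $f(u(s))$. First we record an $L^2$ bound. Assumption \ref{assum:f} gives $\|f(u)\|_{L^2(\Omega)}\le C|\Omega|^{1/2}$, so the standard contraction argument in $C([0,T];L^2(\Omega))$ (with $f$ Lipschitz) yields a unique mild solution with $\|u(t)\|_{L^2}\le C_T$.

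Next we gain a derivative on $f(u)$. Since $f'$ is bounded, the chain rule gives $\nabla f(u)=f'(u)\nabla u$, so $\|f(u)\|_{H^1}\le C(1+\|u\|_{H^1})$. Note that $f(u)$ need not vanish on $\partial\Omega$ unless $f(0)=0$, so we should only use $f(u)\in H^1(\Omega)\subset \dot H^{1/2-\epsilon}(\Omega)$; alternatively we can use \eqref{eqn:basic-est3} of Lemma \ref{lem:f} applied with $v=0$ after subtracting the constant $f(0)$, whose smoothing is harmless. With $\|f(u)\|_{\dot H^{\sigma}}\le C(1+\|u\|_{H^1})$ for some $\sigma\in(0,1/2)$, the smoothing estimate gives
\begin{equation*}
\|Ae^{-(t-s)A}f(u(s))\|_{L^2}\le C(t-s)^{-1+\sigma/2}\bigl(1+\|u(s)\|_{\dH1}\bigr),
\end{equation*}
which is integrable in $s$. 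So everything reduces to an a priori $\dH1$ bound for $u$, uniform on $[0,T]$.

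To obtain the $\dH1$ bound, we repeat the Duhamel argument with $A^{1/2}$ in place of $A$. This gives
\begin{equation*}
\|u(t)\|_{\dH1}\le \|u_0\|_{\dH1}+C\int_0^t (t-s)^{-1/2}\,\|f(u(s))\|_{L^2}\,\d s\le C_T .
\end{equation*}
Substituting this back, we conclude
\begin{equation*}
\|u(t)\|_{\dH2}\le \|u_0\|_{\dH2}+C_T .
\end{equation*}
To reach the stated form $C_T\|u_0\|_{\dH2}$, we absorb the additive constant into the constant $C_T$, allowing it to depend on $f$ and $\|u_0\|$, as the paper's constants implicitly do. If a genuinely linear dependence is required, that holds when $f(0)=0$: then $|f(u)|\le C|u|$, and all the bounds above become linear in $\|u_0\|$, closing via a singular Gronwall inequality.

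We expect the main obstacle to be the boundary-compatibility issue in the second step: $f(u)$ need not lie in $\dot H^1$, so we cannot simply transfer the $H^1$ regularity. We will handle this with the fractional space $\dot H^{\sigma}$, $\sigma<1/2$, which coincides with $H^{\sigma}(\Omega)$ and needs no boundary trace, or alternatively with the maximal-regularity type identity $\int_0^t Ae^{-(t-s)A}g\,\d s=(I-e^{-tA})g$ applied to $g=f(u(t))$, plus a Hölder-in-time difference term.
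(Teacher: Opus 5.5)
Your argument is correct, but it takes a different route from the paper's.

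\textbf{The paper's route.} The paper differentiates the Duhamel formula in time, which gives a linear Volterra equation
$\partial_t u(t) = -e^{-tA}Au_0 + e^{-tA}f(u_0) + \int_0^t e^{-sA}f'(u(t-s))\,\partial_t u(t-s)\,\d s$.
Gronwall then bounds $\|\partial_t u(t)\|_{L^2(\Omega)}$, and the $\dot H^2(\Omega)$ bound is read off from $Au = f(u)-\partial_t u$.

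\textbf{Your route.} You never differentiate in time. You trade time regularity for spatial regularity:
\begin{itemize}
\item a cheap $\dot H^1(\Omega)$ bound from $\|A^{1/2}e^{-\tau A}\|\le C\tau^{-1/2}$ and the boundedness of $f$;
\item then $f(u)\in H^1(\Omega)$, which makes $\|Ae^{-(t-s)A}f(u(s))\|_{L^2(\Omega)}$ integrable in $s$.
\end{itemize}
Of your two fixes for the boundary issue, the splitting $f(u)=(f(u)-f(0))+f(0)$ is the cleaner one. It gives $f(u)-f(0)\in H^1_0(\Omega)=\dot H^1(\Omega)$, hence the kernel $(t-s)^{-1/2}$ directly, and the constant contributes only $(I-e^{-tA})f(0)$. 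The fractional-space identification is also fine, just less direct.

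\textbf{What your route buys.} It needs only that $f$ be bounded and Lipschitz. It also avoids the step, done only formally in the paper, of differentiating the mild solution in time. Finally, it makes explicit that the true bound is $C_T(1+\|u_0\|_{\dot H^2(\Omega)})$. The paper silently absorbs the terms $\|e^{-tA}f(u_0)\|_{L^2(\Omega)}$ and $\|f(u)\|_{L^2(\Omega)}$ into the constant. As you note, the literally linear bound fails for $u_0=0$ with $f(0)\neq 0$, and it does hold when $f(0)=0$.

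\textbf{What the paper's route buys.} As a by-product it yields $\|\partial_t u(t)\|_{L^2(\Omega)}\le C(1+\|u_0\|_{\dot H^2(\Omega)})$. This estimate is reused in the proof of Lemma \ref{lem:f_-2} to bound $\|\overline{w}_\theta(s)-w_\theta\|_{L^2(\Omega)}$. With your approach you would recover it afterwards from $u'=f(u)-Au$.
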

 \begin{proof}
 	By the solution representation
 	$u(t) = e^{-At} u_0 + \int_0^t e^{-As} f(u(t-s)) \,\d s$, we have
 	\begin{equation*}
 		\partial_tu(t) = -e^{-At} Au_0 + {e^{-At}f(u_0)} + \int_0^t e^{-As}f'(u(t-s)) \partial_tu(t-s) \,\d s.
 	\end{equation*}
 	From Assumption \ref{assum:f}, we deduce 
 	\begin{equation*}
 		\|\partial_tu(t)\|_{L^2 \II } \le \|u_0\|_{\dH2} +C \int_0^t  \|\partial_tu (t-s)\|_{L^2 \II } \,\d s.
 	\end{equation*}
 	Then Gronwall's inequality implies 
 	$\|\partial_tu(t)\|_{L^2 \II } \le C_T\|u_0\|_{\dot{H}^2 \II }$  for all $t\in[0,T]$.
 	This and equation \eqref{eqn:pde} imply the desired result.
 \qed \end{proof}
 
 With the exact solver for the FP and the stability function $R$ of  the CP, we define 
 \begin{equation}\label{eqn:gamma}
 	\gamma(s)=\frac{e^{-s}-R(s)}{1-|R(s)|} \quad\mbox{and}\quad \gl =\sup_{s\in( 0,\infty)} |\gamma(s)|.
 \end{equation}
 
 \begin{remark}\label{rem:gamma_lin}
 	The convergence factor $\gl$ characterizes the convergence rate of the parareal algorithm for linear problems with the CP given in \eqref{eqn:cp}  \cite{WuZhou:2015,jin2025optimizing}. 
 	For the exact FP, there holds
 	\begin{equation}\label{eqn:gl}
 		\gl = 
 		\begin{cases} 
 			0.298, & \text{if CP = BE},~R(s)=\frac{1}{1+s}, \\
 			0.082, & \text{if CP = two-stage Lobatto IIIC},~R(s)=\frac{2}{2+2s+s^2}, \\
 			0.016, & \text{if CP = OCP \cite{jin2025optimizing}},~R(s)=\frac{1-0.17922s}{1+0.82078s+0.42444s^2}.
 		\end{cases}
 	\end{equation}
 	Here BE denotes the backward Euler propagator, and  OCP denotes the optimized coarse propagator 
 	for the exact FP, proposed in \cite{jin2025optimizing}.
 \end{remark} 
 
 We impose the following assumption on $R$ and $P$ in \eqref{eqn:cp}. 
 \begin{assumption}\label{assum:R}
 	The rational functions $R$ and $P$ in  \eqref{eqn:cp}
 	satisfy the following conditions:
 	\begin{enumerate}
 		\item[{\rm(i)}] $R(0)=1$ and $R'(0)=-1$. 
 		$R(x) \in (-1,1)$ and $|xR(x)|\leq C$ for all  $x\in (0,\infty]$.
 		\item[{\rm(ii)}] 
 		$P(0)=1$. $|P(x)|$ and $|xP(x)|\leq C$ for $x\in (0,\infty]$.
 		\item[{\rm(iii)}] Let $Q\left( s,r \right) =\left( R\left( s \right) \right)^{r-1} \left( e^{-s}-R\left( s \right) \right)$. For $r\in[1,\infty)$, let $\Lambda_{r} =\left\{ s>0,\partial_{s} Q\left( s,r \right) =0 \right\}$ and
 		\begin{equation}\label{hr}
 			h(r) =\min_{s\in \Lambda_{r}} |R\left( s \right) -\left( 1-r^{-1} \right) |.
 		\end{equation}
 		The function $h$ has a finite number of simple zeros, and $h(1)\neq0$.
 		\item[{\rm(iv)}] Let $\{s_k^\ast\}_{k=1}^N$ be zeros of $\gamma'(s)$ with $R(s)> 0$. The function $|\gamma(s)|$ attains its unique supremum at $s_0 \in \{s_k^\ast \}_{k=1}^N$ and $e^{-s_0}+R'(s_0)\neq 0$.
 	\end{enumerate}
 \end{assumption}
 
 \begin{remark}\label{rem:ass}
 	In Assumption \ref{assum:R}, conditions (i) and (ii) indicate that the CP \eqref{eqn:cp} is a first-order solver, while conditions (iii) and (iv) are employed to establish Theorem \ref{thm:inv_gamma_lin}. Fig.~\ref{fig:assum} illustrates the function $h(r)$ in (iii) for three CPs in Remark \ref{rem:gamma_lin}. From the derivative $\partial_s Q(s,r)$ in \eqref{eqn:Q,gamma} (in the appendix), the conditions $\partial_s Q(s,r) = 0$ and $\gamma'(s)=0$ imply $h(r)=0$. That is, the extremal points of $|\gamma(s)|$ are the zeros of $h(r)$. Let  $\{r_k^\ast\}_{k=1}^K$ are zeros of $h(r)$. Under condition (iii), they are simple roots, and hence there holds $h'(r_k^*\pm)\neq 0$. Therefore, for any $r_0 > 1$, there exist $\delta >0$ and $a>0$ such that 
 	\begin{equation}\label{eqn:f_lower bound}
 		h(r) \geq \sum_{k=1}^K \chi_\delta^{(k)} (r)\cdot a|r-r_k^\ast|,\quad \forall r\in \bigcup_{k=1}^K [r_k^* - \delta, r_k^* + \delta],
 	\end{equation}
 	where $\chi_\delta^{(k)}$ is the characteristic function of the interval $[r_k^\ast -\delta , r_k^\ast + \delta]$. The dotted lines in Fig.~\ref{fig:assum} denote the lower bounds. The inequality \eqref{eqn:f_lower bound} is used in Step (iii) of the proof of Theorem \ref{thm:inv_gamma_lin} to locally bound $R(s)$ and $1-R(s)$ using linear approximations. Condition (iv) is used in Step (ii) to ensure that  $h(r)\neq 0$  implies $ \gamma'(s) \neq 0 $ when $\partial_s Q(s,r)=0$ in \eqref{eqn:Q,gamma}. The condition $R(s) > 0$ ensures that $\gamma'(s)$ is well-defined. These conditions can be readily verified for specific CPs.
 	
 	\begin{figure}[htbp!]
 		\centering
 		\begin{tabular}{ccc}
 			\includegraphics[width=0.3\textwidth]{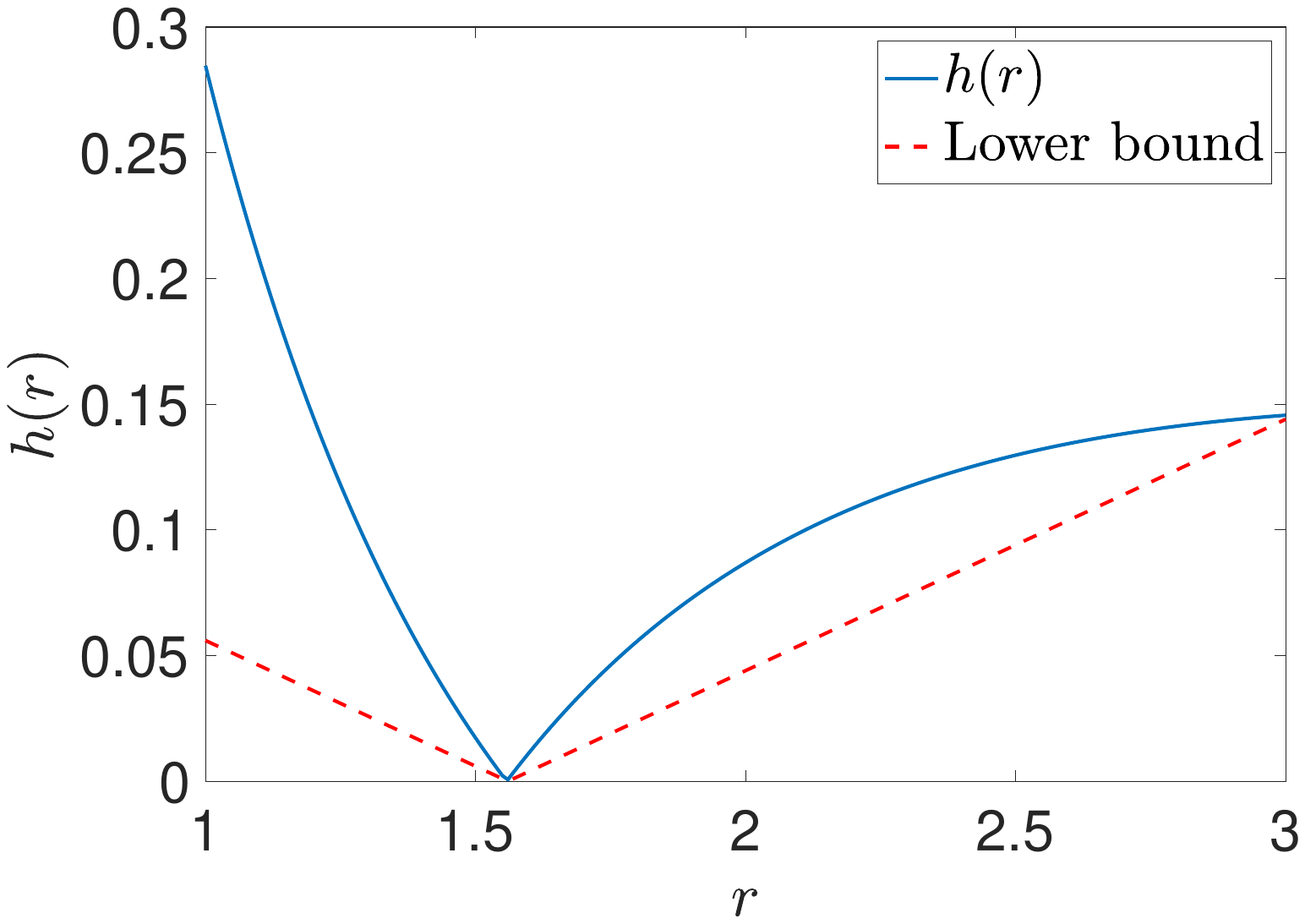}
 			&       \includegraphics[width=0.3\textwidth]{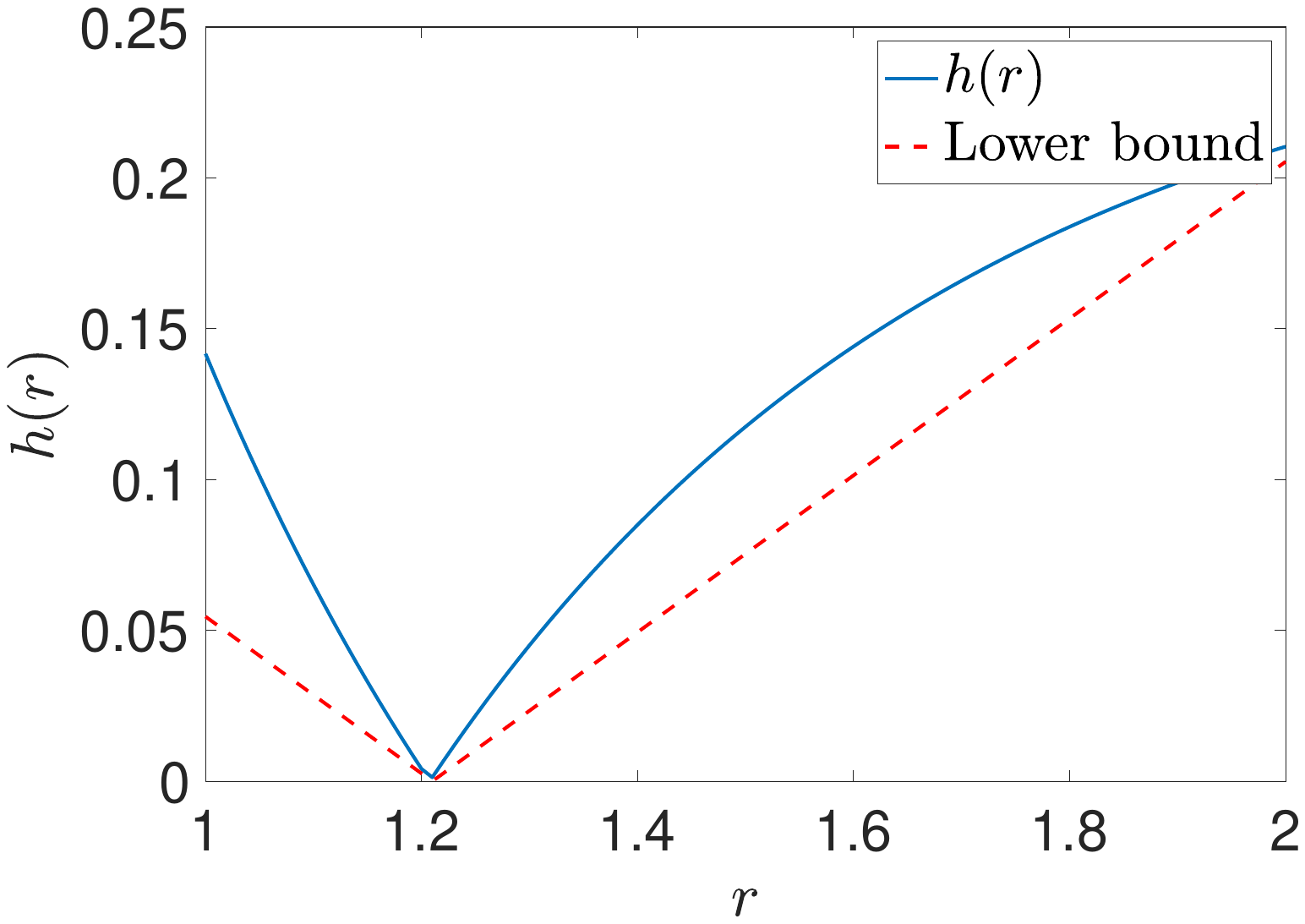}
 			&      \includegraphics[width=0.3\textwidth]{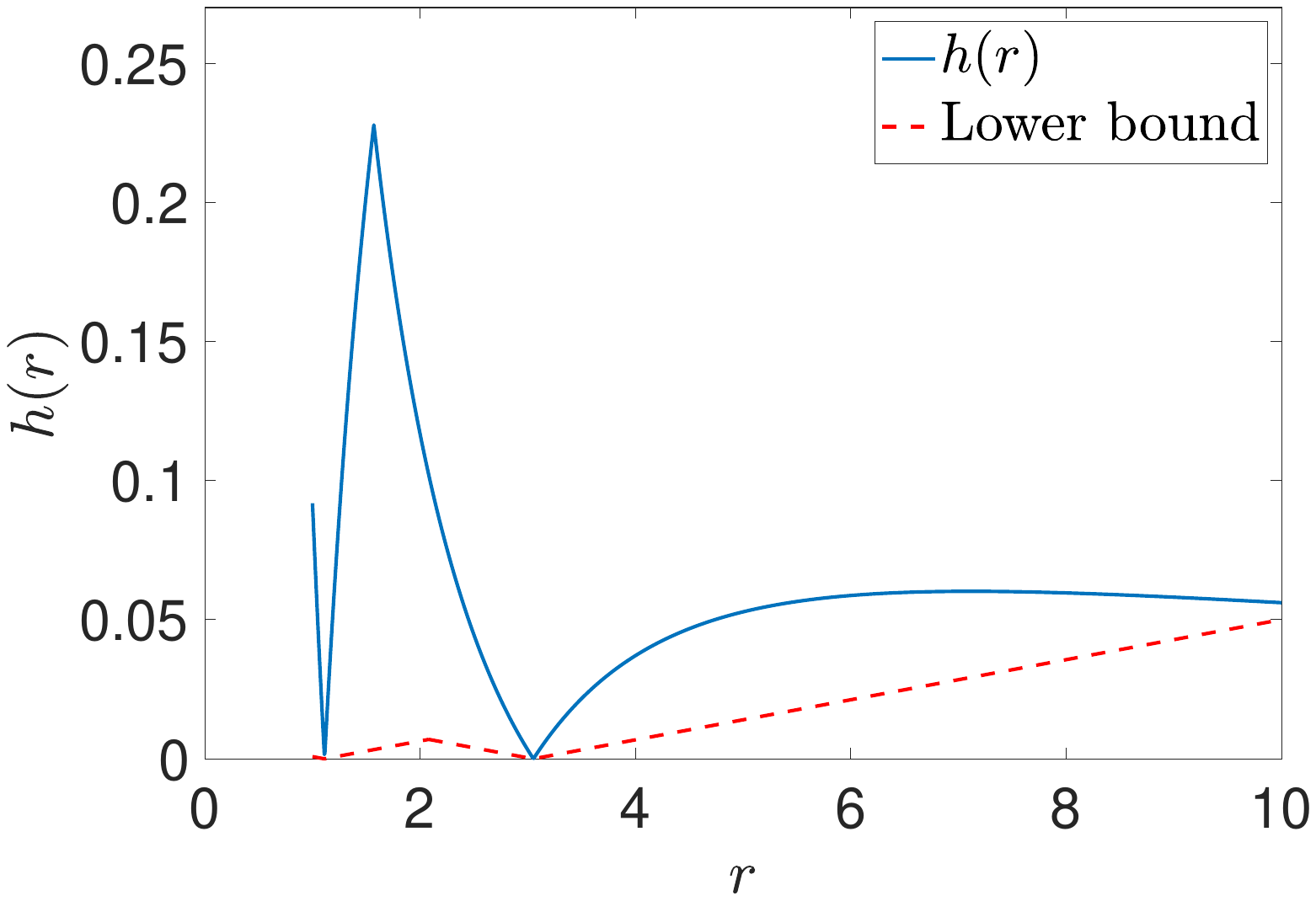}\\
 			\includegraphics[width=0.3\textwidth]{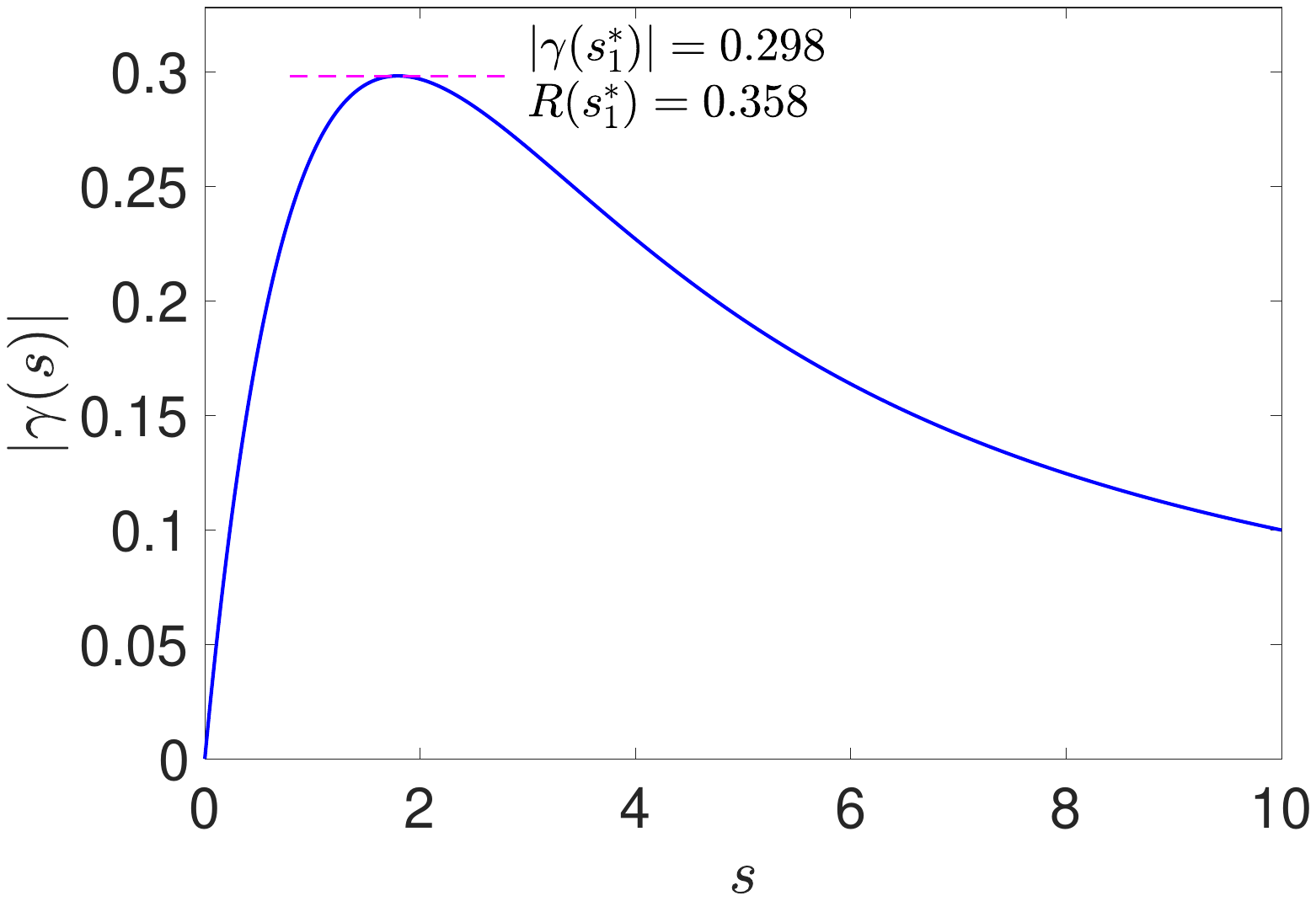}
 			&    \includegraphics[width=0.3\textwidth]{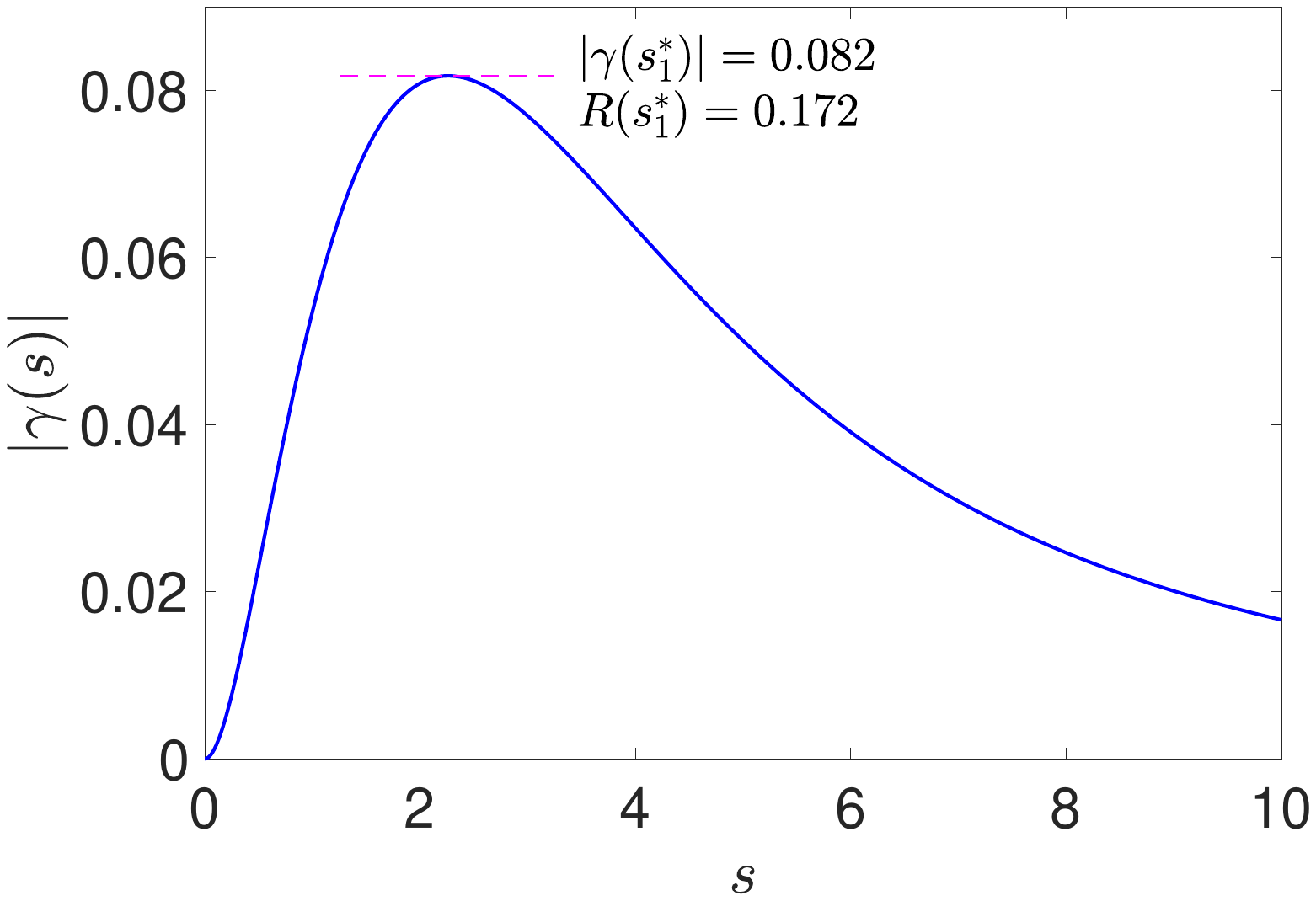}
 			&    \includegraphics[width=0.3\textwidth]{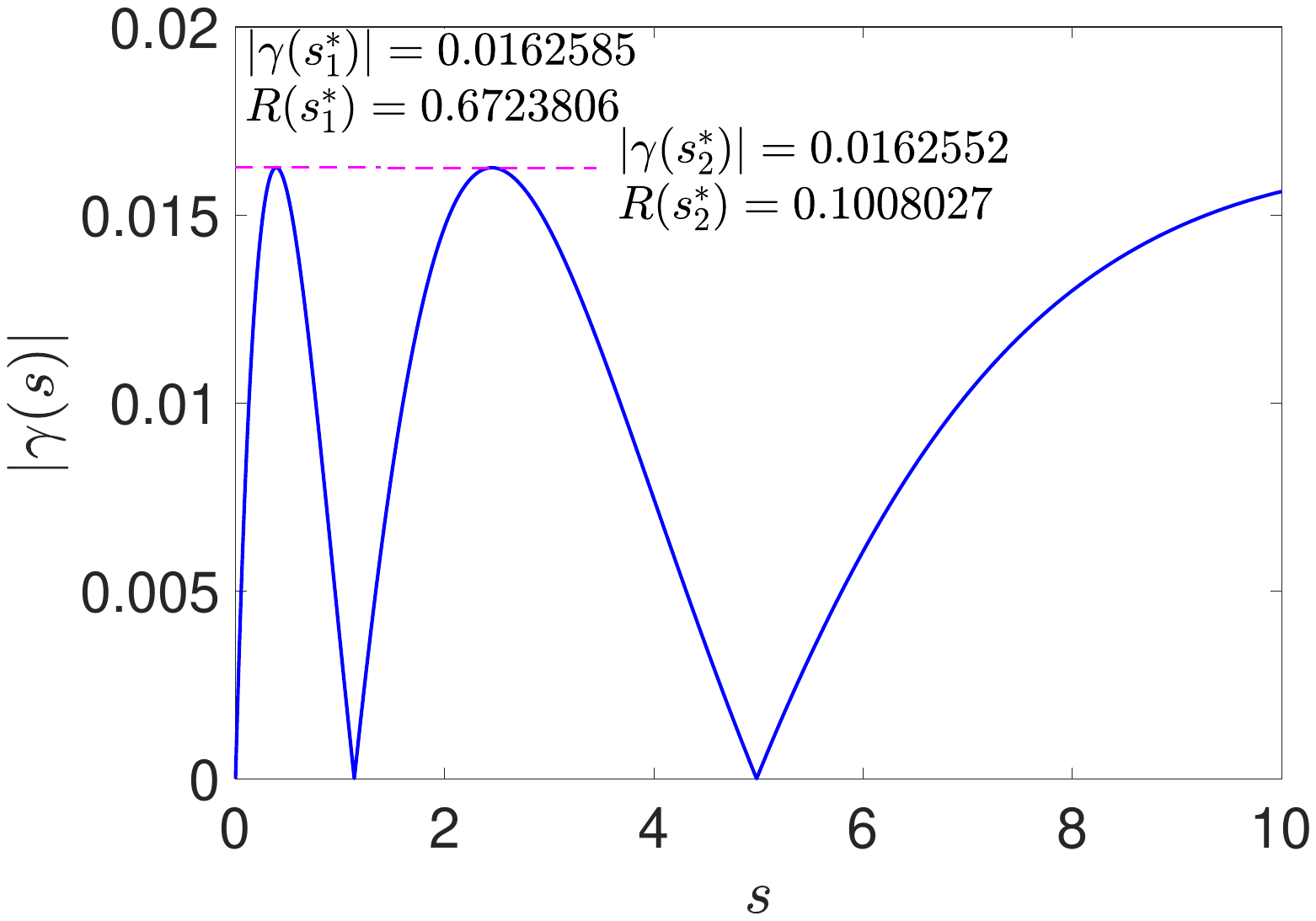}\\
 			(a) BE & (b) Two-stage Lobatto IIIC  & (c) OCP
 		\end{tabular}    
 		\caption{The plots  of $h(r)$ and $|\gamma(s)|$ when CP is BE, two-stage Lobatto IIIC method, and OCP.}
 		\label{fig:assum}
 	\end{figure}
 \end{remark}

 \section{Linear convergence of the parareal algorithm}
 \label{sec:linear conv}
 In this section, we establish the linear convergence of the parareal algorithm for problem \eqref{eqn:pde}. 
 We define the operators
 \begin{equation} \label{eqn:al-beta}
 	\alpha = R(\Delta TA)\quad \mbox{and}\quad  \beta = e^{-\Delta TA} - R(\Delta TA),
 \end{equation}
 and define $\text{Res}(u,v)$ by
 \begin{equation}\label{eqn:def_Res}
 	\begin{aligned}
 		(\mathcal{F} - \mathcal{G})(u) - (\mathcal{F} - \mathcal{G})(v) &= (e^{ -\Delta TA} - R(\Delta TA))(u-v) + \text{Res}(u,v) \\
 		&= \beta(u - v) + \text{Res}(u,v).  
 	\end{aligned}
 \end{equation}
 Let $E_n^k: = U_{n}^k-U_n$ be the parareal error. In the analysis of linear PDEs, Gander and Vanderwalle \cite{GanderVandewalle:2007} derived an explicit relationship between $E_{n+1}^{k+1}$ and the initial errors $E_i^0$, $i=0,\cdots,n-k$. Inspired by the analysis in \cite{GanderVandewalle:2007}, our key analysis strategy lies in extracting the linear part from the parareal iteration. The next lemma gives one crucial recursion for $E_{n+1}^{k+1}$.
 
 \begin{lemma}\label{lem:expression_para}
 	With the FP in \eqref{eqn:fp} and the CP in \eqref{eqn:cp}, the parareal error $E_n^k$ satisfies
 	\begin{equation}
 		\begin{aligned}
 			E_{n+1}^{k+1} &= \sum_{j=0}^{k} \sum_{i=j}^{n-1} \binom{i}{j} \alpha^{i-j} \beta^{j} \Delta TP(\Delta TA)(f( U_{n-i}^{k+1-j})-f( U_{n-i})) \label{eqn:para_whole}\\
 			&\quad + {\beta^{k+1} \sum_{i=k}^{n} \binom{i}{k} \alpha^{i-k} E_{n-i}^{0}} + \sum_{j=0}^{k} \sum_{i=j}^{n-1} \binom{i}{j} \alpha^{i-j} \beta^{j} ~\mathrm{Res}(U_{n-i}^{k-j}, U_{n-i}). 
 		\end{aligned}
 	\end{equation}
 \end{lemma}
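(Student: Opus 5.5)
The plan is to derive a one-step error recursion and then unroll it twice: first in $n$, which produces the sums over $i$ weighted by powers of $\alpha$, and then in $k$, which produces the powers of $\beta$ and the binomial coefficients. The exact solution satisfies $U_{n+1}=\mathcal{F}(U_n)$, so we may write $U_{n+1}=\mathcal{G}(U_n)+(\mathcal{F}-\mathcal{G})(U_n)$. Subtracting this from the parareal update \eqref{eqn:parareal} and using \eqref{eqn:cp} and \eqref{eqn:def_Res} gives
\begin{equation*}
E_{n+1}^{k+1}=\alpha E_n^{k+1}+\Delta T P(\Delta TA)\bigl(f(U_n^{k+1})-f(U_n)\bigr)+\beta E_n^k+\mathrm{Res}(U_n^k,U_n).
\end{equation*}
Here $E_0^{k+1}=0$ for every $k$, since the initial value is imposed exactly.

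Write $g_n^{k+1}:=\Delta TP(\Delta TA)(f(U_n^{k+1})-f(U_n))$ and $r_n^k:=\mathrm{Res}(U_n^k,U_n)$. The recursion has the form $E^{k+1}=\alpha S E^{k+1}+\beta S E^k+S(g^{k+1}+r^k)$, where $S$ is the index shift and $E^{k+1}_0=0$. I would then prove the formula by induction on $k$, with an inner induction on $n$ (equivalently, by the generating-function identity $(1-\alpha z)^{-1}$ applied repeatedly).

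First, solving in $n$ with $E_0^{k+1}=0$ gives
\begin{equation*}
E_{n+1}^{k+1}=\sum_{i=0}^{n}\alpha^{i}\bigl(\beta E_{n-i}^k+g_{n-i}^{k+1}+r_{n-i}^k\bigr).
\end{equation*}
Next, substitute the same expression for $E^k_{n-i}$, and continue down to the level $E^0$. The operators $\alpha$ and $\beta$ are functions of $A$, so they commute. Collecting terms then uses the identity $\sum_{m}\binom{m}{j-1}=\binom{i}{j}$ (the hockey-stick identity). This identity accounts for the coefficient $\binom{i}{j}\alpha^{i-j}\beta^j$ multiplying $g^{k+1-j}_{n-i}$ and $r^{k-j}_{n-i}$, and for the coefficient $\beta^{k+1}\binom{i}{k}\alpha^{i-k}$ multiplying $E^0_{n-i}$.

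Organized as an induction on $k$, the argument runs as follows. The case $k=0$ is exactly the unrolled formula with $j=0$. For the step, assume \eqref{eqn:para_whole} holds at level $k$, insert it into the unrolled formula at level $k+1$, and reindex. The key computation is
\begin{equation*}
\sum_{l}\binom{l}{j}=\binom{i+1}{j+1}-\text{type sums},
\end{equation*}
which follows from Pascal's rule.

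The main obstacle is bookkeeping the summation ranges: $i$ runs from $j$ to $n-1$ in the $g$ and $\mathrm{Res}$ sums, and from $k$ to $n$ in the $E^0$ sum. These ranges should come out correctly once the vanishing terms are taken into account. Specifically, $g_0^{\cdot}=0$ and $r_0^{\cdot}=0$ because $U_0^{k}=U_0=u_0$, which removes the $i=n$ terms. Similarly, terms with $i<j$ drop out because $\binom{i}{j}=0$. Matching these ranges through the double reindexing is the step I would check most carefully.
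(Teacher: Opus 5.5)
Your proposal is correct and takes essentially the same route as the paper. Both derive the one-step recursion $E_{n+1}^{k+1}=\alpha E_n^{k+1}+g_n^{k+1}+\beta E_n^k+r_n^k$, unroll it in $n$ using $E_0^{k+1}=0$, and then repeatedly substitute the lower-iteration errors, with the binomial coefficients coming from the hockey-stick identity. Your induction on $k$ is a more formal version of the paper's ``repeat the procedure $k-1$ times''. The garbled display should read $\sum_{l=j}^{p-1}\binom{l}{j}=\binom{p}{j+1}$, and together with $g_0^{\cdot}=r_0^{\cdot}=E_0^{\cdot}=0$ this gives exactly the stated summation ranges.
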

 \begin{proof}
 	The parareal algorithm and the exact solution $U_{n+1}$ respectively satisfy
 	$$
 	U_{n+1}^{k+1}=\mathcal{G}( U_{n}^{k+1} ) +\mathcal{F}(U_{n}^{k}) -\mathcal{G}( U_{n}^{k})\quad \mbox{and}\quad U_{n+1}=\mathcal{G}(U_{n}) +\mathcal{F}(U_{n}) -\mathcal{G}(U_{n}).
 	$$ Thus the error $E_{n+1}^{k+1}=U_{n+1}^{k+1}-U_{n+1}$ satisfies
 	\begin{align*}
 		E_{n+1}^{k+1} &=(\mathcal{G}( U_{n}^{k+1}) -\mathcal{G}( U_{n} ) ) + (( \mathcal{F}( U_{n}^{k}) - \mathcal{G}(U_{n}^{k})) - ( \mathcal{F}( U_{n}) - \mathcal{G} (U_{n}))) \\
 		&= \alpha E_{n}^{k+1} + \Delta TP( \Delta TA)( f( U_{n}^{k+1}) - f( U_{n})) \\
 		&\quad + ( \mathcal{F}(U_{n}^{k}) -\mathcal{G}(U_{n}^{k})) - (\mathcal{F}( U_{n}) - \mathcal{G}(U_{n})) \\
 		&= \alpha^{n+1} E_0^{k+1} + \sum_{i=0}^{n-1} \alpha^{i} \Delta TP( \Delta TA)(f(U_{n-i}^{k+1}) - f( U_{n-i})) \\
 		&\quad + \sum_{i=0}^{n-1} \alpha^{i} (( \mathcal{F} -\mathcal{G})( U_{n-i}^{k}) -( \mathcal{F} - \mathcal{G}) ( U_{n-i})),
 	\end{align*}
 	with $E_0^{k+1} = 0$. 
 	Then the error $E_{n+1}^{k+1}$ satisfies
 	\begin{align*}
 		E_{n+1}^{k+1} &= \sum_{i=0}^{n-1} \alpha^{i} \Delta T P( \Delta TA )( f( U_{n-i}^{k+1}) - f( U_{n-i}))  + \sum_{i=0}^{n-1} \alpha^{i} \beta E_{n-i}^{k} + \sum_{i=0}^{n-1} \alpha^{i} ~\text{Res}( U_{n-i}^{k}, U_{n-i}).
 	\end{align*}
 	Next we expand the linear part $\sum_{i=0}^{n-1} \alpha^{i} \beta E_{n-i}^{k}$ using the parareal iteration:
 	\begin{align*}
 		\sum_{i=0}^{n-1} \alpha^{i} \beta E_{n-i}^{k} &= \sum_{i=0}^{n-1} \sum_{j=0}^{n-i-1} \alpha^{i+j} \beta \Delta TP( \Delta TA)( f( U_{n-i-1-j}^{k}) - f( U_{n-i-1-j})) \\
 		&\quad+ \sum_{i=0}^{n-1} \sum_{j=0}^{n-i-1} \alpha^{i+j} \beta^{2} E_{n-i-1-j}^{k-1} + \sum_{i=0}^{n-1} \sum_{j=0}^{n-i-1} \alpha^{i+j} \beta~ \text{Res}( U_{n-i-1-j}^{k-1}, U_{n-i-1-j}) \\
 		&= \beta \sum_{i=0}^{n-1} \binom{i+1}{1}\alpha^{i} \Delta TP(\Delta TA)( f( U_{n-1-i}^{k}) - f(U_{n-1-i})) \\
 		&\quad + {\beta^{2} \sum_{i=0}^{n-1} \binom{i+1}{1}\alpha^{i} E_{n-1-i}^{k-1}} + \beta \sum_{i=0}^{n-1} \binom{i+1}{1} \alpha^{i} ~\text{Res}( U_{n-1-i}^{k-1}, U_{n-1-i} ).
 	\end{align*}
 	This identity connects $E_{n+1}^{k+1}$ with the errors at the $k-1,k$ and $k+1$th iterations:
 	\begin{align*}
 		E_{n+1}^{k+1} &= \sum_{i=0}^{n-1} \alpha^{i} \Delta TP( \Delta TA)( f( U_{n-i}^{k+1})-f( U_{n-i}))  + \sum_{i=0}^{n-1} \alpha^{i}~\text{Res}( U_{n-i}^{k}, U_{n-i})\\
 		&\quad+ \beta \sum_{i=1}^{n} \binom{i}{1} \alpha^{i-1} \Delta TP( \Delta TA) ( f( U_{n-i}^{k}) - f( U_{n-i})) \\
 		&\quad + {\beta^{2} \sum_{i=1}^{n} \binom{i}{1} \alpha^{i-1} E_{n-i}^{k-1}} + \beta \sum_{i=1}^{n} \binom{i}{1} \alpha^{i-1}~\text{Res}( U_{n-i}^{k-1}, U_{n-i}).
 	\end{align*}
 	Repeating the procedure $k-1$ times gives the desired identity.
 \qed \end{proof}
 
 Lemma \ref{lem:expression_para} provides the key recursion for the convergence analysis. We shall analyze each summation in \eqref{eqn:para_whole} separately in several technical lemmas:  Lemma~\ref{lem:lin} handles the linear part, and Lemmas~\ref{lem:f-f} and \ref{lem:Res} analyze the first summation and the last summation, respectively. 
 
 \subsection{Preliminary technical estimates}\label{subsec:prelim}
 Now we derive several preliminary estimates to bound the summations in Lemma \ref{lem:expression_para}. In particular, we establish the linear decay of the summation \begin{equation}\label{eqn:summation}
 	\sum_{i=j}^{\infty} \binom{i}{j}\sup_{s\in (0,\infty)}{ |R(s)|^{i-j}|e^{-s}-R(s)|^j},
 \end{equation}
 which enables bounding the first and third summations in \eqref{eqn:para_whole}. First, we estimate \eqref{eqn:summation} when $j\leq 2$. The bound \eqref{eqn:coro_ineq_2} involves the convergence factor $\gl$, but decays slower with respect to $i$.
 
 \begin{lemma}\label{lem:s R^i Q}
 	Let $R$ and $P$ satisfy Assumption \ref{assum:R}. Then for $\kappa \in [0,1]$ and $k,j\in \{0,1\}$,
 	there exists $C>0$ independent of $\kappa,k$ and $j$ such that for all $i\geq 1$,
 	\begin{equation}\label{eqn:coro_ineq}
 		\sup_{s\in ( 0,\infty)} \left|s^{\kappa}(R(s))^{i} \big( e^{-s}-R(s) \big)^{j} \Big(\frac{1-e^{-s}}{s} -P(s) \Big)^{k}\right|\leq \frac{C}{( i+1)^{2j+k+\kappa}}.
 	\end{equation}    
 	Moreover, for $\kappa \in [0,1]$ and $k\in \{0,1\}$,
 	there exists $C>0$ independent of $\kappa$ and $k$ such that for all $i\geq 1$,
 	\begin{equation}\label{eqn:coro_ineq_2}
 		\sup_{s\in ( 0,\infty)} \left|s^{\kappa}(R(s))^{i} \big( e^{-s}-R(s) \big)\Big(\frac{1-e^{-s}}{s} -P(s) \Big)^{k}\right|\leq \frac{C\gl}{( i+1)^{1+k+\kappa}}.
 	\end{equation}
 \end{lemma}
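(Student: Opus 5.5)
The plan is to split $(0,\infty)$ into a small-$s$ region $(0,s_*]$ and a large-$s$ region $[s_*,\infty)$, with $s_*>0$ fixed depending only on $R$. On the large-$s$ region we use the strict contraction of $R$. Assumption \ref{assum:R}(i) gives $R(s)\in(-1,1)$ for $s\in(0,\infty]$. By continuity on the compactified interval $[s_*,\infty]$, this yields $q:=\sup_{s\ge s_*}|R(s)|<1$.

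The remaining factors are bounded uniformly for $s\ge s_*$:
\begin{itemize}
\item $|e^{-s}-R(s)|\le 2$;
\item $|(1-e^{-s})/s-P(s)|\le C$ by Assumption \ref{assum:R}(ii);
\item $s^\kappa|R(s)|^i\le s^\kappa|R(s)|\,q^{i-1}$, and $s^\kappa|R(s)|\le C$ since $|sR(s)|\le C$ by (i).
\end{itemize}
So the supremum over $[s_*,\infty)$ is at most $Cq^{i-1}$, which beats any negative power of $i+1$. For \eqref{eqn:coro_ineq_2} we also keep the factor $|e^{-s}-R(s)|$ and bound it by $\gl(1-|R(s)|)\le\gl$ using the definition \eqref{eqn:gamma}. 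This gives $C\gl q^{i-1}$.

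On the small-$s$ region we use Taylor expansions at $0$. Since $R(0)=1$ and $R'(0)=-1$, we have $e^{-s}-R(s)=O(s^2)$. Likewise $P(0)=1$ together with $(1-e^{-s})/s=1-s/2+\dots$ gives $(1-e^{-s})/s-P(s)=O(s)$. Moreover $R(s)\le 1-cs\le e^{-cs}$ on $(0,s_*]$ for $s_*$ small, and $R>0$ there.

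The quantity in \eqref{eqn:coro_ineq} is therefore bounded by $C s^{\kappa+2j+k}e^{-cis}$. Maximizing $s^m e^{-cis}$ over $s>0$ gives $C_m i^{-m}$ with $m=\kappa+2j+k$. Since $m\le 4$ ranges over a bounded set, the constant is uniform in $\kappa,k,j$. This proves \eqref{eqn:coro_ineq}.

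For \eqref{eqn:coro_ineq_2} we want the factor $\gl$ but only the power $1+k+\kappa$. We write $|e^{-s}-R(s)|=|\gamma(s)|(1-R(s))\le\gl(1-R(s))$ on $(0,s_*]$. Combined with $1-R(s)\le Cs$, this bounds the quantity by $C\gl s^{1+\kappa+k}R(s)^i\le C\gl s^{1+\kappa+k}e^{-cis}$. The same maximization gives $C\gl(i+1)^{-(1+k+\kappa)}$.

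The step needing the most care is the small-$s$ bound $0<R(s)\le e^{-cs}$. We obtain it from $R'(0)=-1$: choose $s_*$ so that $R'\le-1/2$ on $[0,s_*]$. Then $R(s)\le 1-s/2\le e^{-s/2}$, and $R(s)>0$ there as well.

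Finally, we check that $\gl<\infty$, which the factor-$\gl$ bound requires. This holds because $|\gamma(s)|$ is bounded: it is $O(s)$ near $0$ by the Taylor expansions, and it is bounded away from $0$ in $s$ because $1-|R|$ stays away from zero there. Throughout, $i\ge1$ lets us replace $i$ by $i+1$ at the cost of a constant.
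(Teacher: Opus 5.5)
Your proposal is correct and follows essentially the same route as the paper. You split $(0,\infty)$ into a region where $|R|\le q<1$ gives geometric decay $Cq^{i-1}$ and a region near $0$ where the order conditions give the factor $s^{2j+k}$ and $|R(s)|\le e^{-cs}$, then maximize $s^m e^{-cis}$; the $\gl$-version comes from $|e^{-s}-R(s)|\le \gl(1-|R(s)|)$. The only cosmetic difference is the choice of split point: you place it near $0$, get $q<1$ by compactness on $[s_*,\infty]$, and derive $R(s)\le e^{-s/2}$ directly from $R'(0)=-1$, whereas the paper splits at a large $S$ using $|R(s)|\le C/s$ and cites Thom\'ee's Lemma 9.2 for $|R(s)|\le e^{-cs}$ on $(0,S)$.
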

 \begin{proof}
 	Let
 	$Q(s) = s^{\kappa} ( e^{-s} - R(s) )^{j} ( \frac{1 - e^{-s}}{s} - P(s))^{k}$. Since \(|s R(s)| \leq C\) and \(|s P(s)| \leq C\) for \(s \geq 0\), we can choose \(S > 0\) sufficiently large and \(q < 1\) such that \(|R(s)| \leq C/s < q\) and \(|s^{\kappa} R(s)| \leq C\) for all \(s \in (S, \infty)\). Then we have
 	\begin{equation*}
    \begin{split}
 		\sup_{s \in (S, \infty)} \left| (R(s))^i Q(s) \right|
 		&\leq  \sup_{s \in (S, \infty)} |R(s)|^{i-1} \cdot  \sup_{s \in (S, \infty)} \frac{|Q(s)|}{s^{\kappa}} \cdot  \sup_{s \in (S, \infty)} |s^{\kappa} R(s)| \\
        &\leq C q^{i-1} \leq \frac{C }{(i+1)^{2j + k + \kappa}},
    \end{split}
 	\end{equation*}
 	since the exponential decay is dominated by the polynomial decay. The order conditions on $R$ and $P$ imply $\frac{{\rm d}^{q}}{{\rm d}s^{q}} \frac{Q\left( s \right)}{s^{\kappa}}=0$ for $q=0,\cdots,2j+k-1$, and thus $|\frac{Q(s)}{s^\kappa}| \leq C s^{2j+k}$ for $s\in (0,S)$. Further, for $s\in (0,S)$, we have $|R(s)|\leq e^{-cs}$ for some $c>0$ \cite[Lemma 9.2]{thomee2007galerkin}. Then, for $i \geq 1$, we derive
 	\begin{equation*}
 		\sup_{s\in(0,S)} |(R(s))^i Q(s)| \leq C\sup_{s\in (0,S)} |e^{-ics} s^{2j+k+\kappa}| = \frac{1}{(ci)^{2j+k+\kappa}} \sup_{s\in (0,S)} |e^{-ics} (ics)^{2j+k+\kappa}|.
 	\end{equation*}
 	The estimate $\sup_{s\in(0,\infty)} e^{-s}s^r=(\frac{r}{e})^r$ implies $\sup_{s\in (0,R)}|(R(s))^i Q(s)|\leq \frac{C}{( i+1)^{2j+k+\kappa}}$.
 	Combining the two cases proves \eqref{eqn:coro_ineq}. Next, by the definition of $\gl$ in \eqref{eqn:gamma}, we have 
 	\begin{align*}
 		&\sup_{s\in ( 0,\infty)} \left|s^{\kappa}(R(s))^{i} \big( e^{-s}-R(s) \big)\Big(\frac{1-e^{-s}}{s} -P(s) \Big)^{k}\right| \\
 		\leq &\gl \sup_{s\in ( 0,\infty)} \left|s^{\kappa}(R(s))^{i} \big( 1-|R(s)| \big)\Big(\frac{1-e^{-s}}{s} -P(s) \Big)^{k}\right|.
 	\end{align*}
 	The rest of the proof follows similarly as \eqref{eqn:coro_ineq}.
 \qed \end{proof}
 
 Since Lemma \ref{lem:s R^i Q} contains the factor $j^j$, it is useful only if $j$ is small. Next, we treat large $j$. Lemma \ref{lem:gamma_lin} bounds a series involving $R(s)$ and $e^{-s}$, whose growth is controlled by $\gl$.
 \begin{lemma}\label{lem:gamma_lin}
 	Let $R$ satisfy Assumption \ref{assum:R}. Then for $j\geq 0$ and $s\in (0,\infty)$, there holds
 	\begin{equation*}
 		\sum_{i=j}^{\infty} \binom{i}{j} |R( s) |^{i-j}|e^{-s}-R(s) |^{j+1}\leq \gl^{j+1}.
 	\end{equation*}
 \end{lemma}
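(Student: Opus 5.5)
The bound comes from the negative binomial series. Fix $s\in(0,\infty)$ and write $r=|R(s)|$, $b=|e^{-s}-R(s)|$. By Assumption \ref{assum:R}(i) we have $R(s)\in(-1,1)$, so $0\le r<1$.

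For any $x\in[0,1)$ and integer $j\ge 0$ the classical identity
\begin{equation*}
\sum_{i=j}^{\infty}\binom{i}{j}x^{i-j}=\frac{1}{(1-x)^{j+1}}
\end{equation*}
holds. It follows by differentiating the geometric series $\sum_{i\ge0}x^i=(1-x)^{-1}$ $j$ times and dividing by $j!$, or by induction on $j$ via Pascal's rule. The series has nonnegative terms and converges absolutely for $x<1$, so there is no rearrangement issue.

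Apply the identity with $x=r$ and pull out the factor $b^{j+1}$, which is independent of $i$:
\begin{equation*}
\sum_{i=j}^{\infty}\binom{i}{j}|R(s)|^{i-j}|e^{-s}-R(s)|^{j+1}=\Big(\frac{|e^{-s}-R(s)|}{1-|R(s)|}\Big)^{j+1}=|\gamma(s)|^{j+1},
\end{equation*}
with $\gamma$ as in \eqref{eqn:gamma}. Since $|\gamma(s)|\le\gl$ by definition of the supremum, we get $|\gamma(s)|^{j+1}\le\gl^{j+1}$, which is the claim.

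The only point that needs care is the strict inequality $|R(s)|<1$ for every $s\in(0,\infty)$. It guarantees convergence of the series and that $\gamma(s)$ is well defined. This is exactly condition (i) of Assumption \ref{assum:R}. No uniformity in $s$ is required, since the bound $\gl$ is uniform by construction, so I expect no real obstacle beyond recalling the binomial series identity.
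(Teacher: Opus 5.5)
Your proposal is correct and is essentially the paper's own proof. Both sum the negative binomial series $\sum_{i\ge j}\binom{i}{j}|R(s)|^{i-j}=(1-|R(s)|)^{-(j+1)}$, which converges because $|R(s)|<1$ by Assumption~\ref{assum:R}(i). The factor $|e^{-s}-R(s)|^{j+1}$ then turns the sum into $|\gamma(s)|^{j+1}\le\gl^{j+1}$.
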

 \begin{proof}
 	Note that $|R(s)| < 1$ for all $s \in (0, \infty)$. By the binomial series identity, we have  
 	\begin{equation*}
 		\sum_{i=j}^{\infty} \binom{i}{j} |R(s)|^{i-j} = \sum_{m=0}^{\infty} \binom{m+j}{j} |R(s)|^{m} = \left( \frac{1}{1 - |R(s)|} \right)^{j+1}.
 	\end{equation*}
 	Thus, by the definition \eqref{eqn:gamma} of the factor $\gl$, for all $s \in (0, \infty)$, we have
 	\begin{equation*}
 		\sum_{i=j}^{\infty} \binom{i}{j} |R(s)|^{i-j} |e^{-s} - R(s)|^{j+1} \leq \left( \frac{|e^{-s} - R(s)|}{1 - |R(s)|} \right)^{j+1} \leq \gl^{j+1}.
 	\end{equation*}
 \qed \end{proof}
 
 Next we refine  Lemma \ref{lem:gamma_lin} by taking an additional supremum over $s \in (0, \infty)$. See Appendix \ref{sec:proof1} for the lengthy and technical proof.
 \begin{theorem}\label{thm:inv_gamma_lin}
 	Let $R$ satisfy Assumption \ref{assum:R}. Then for $j \geq 2$, there exists $C>0$ independent of $j$ such that  
 	\begin{align*}
 		\sum_{i=j}^{\infty} \binom{i}{j} \sup_{s \in (0, \infty)} | ( R(s) )^{i-j} ( e^{-s} - R(s) )^{j} | &\leq C \gl ^{j}.
 	\end{align*}
 \end{theorem}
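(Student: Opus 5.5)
Write $\beta(s)=e^{-s}-R(s)$ and $Q(s,r)=R(s)^{r-1}\beta(s)$ as in Assumption~\ref{assum:R}(iii). For $m=i-j$,
\[
|R(s)|^{m}|\beta(s)|^{j}=|\beta(s)|\,|R(s)|^{m}|\beta(s)|^{j-1}=\bigl|Q(s,r)\bigr|^{j},\qquad r:=\tfrac{m}{j}+1 .
\]
So the summand equals $\binom{i}{j}\bigl(\sup_s|Q(s,1+m/j)|\bigr)^{j}$. The plan is to control $M(r):=\sup_{s>0}|Q(s,r)|$ as a function of $r\in[1,\infty)$ and compare it with the pointwise Lemma~\ref{lem:gamma_lin} bound. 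Pointwise in $s$, the sum equals $(|\beta|/(1-|R|))^{j}\cdot$(a mass-one binomial distribution). The supremum is taken inside the sum, so we must show the sum is still $\le C\gl^j$, i.e.\ that $\binom{m+j}{j}M(1+m/j)^j$ behaves like a discrete negative-binomial density at the maximizing $s$, with only an $O(1)$ loss after summation.

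The first step is Stirling. We have $\binom{m+j}{j}\approx \sqrt{\tfrac{r}{2\pi j(r-1)}}\,\bigl(\tfrac{r^r}{(r-1)^{r-1}}\bigr)^{j}$ with $r=1+m/j$. Define $\Phi(r):=\frac{r^r}{(r-1)^{r-1}}M(r)$. Fixing $s$, the elementary inequality $\frac{r^r}{(r-1)^{r-1}}x^{r-1}(1-x)\le1$ for $x=|R(s)|$ (equality at $x=1-1/r$) gives $\Phi(r)\le\gl$ for every $r$. The summand is then bounded by $C j^{-1/2}(r/(r-1))^{1/2}\Phi(r)^j$. If $\Phi(r)\le\gl$ held with a quadratic deficit around its maximizers, $\Phi(r)\le\gl(1-c(r-r^*)^2)$, then the sum over $m$ would be a Riemann sum of a Gaussian of width $\sqrt j$ in $m$ (width $j^{-1/2}$ in $r$). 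That gives $j\cdot j^{-1/2}\cdot j^{-1/2}\gl^j=O(\gl^j)$, as required. The endpoints need separate care. Small $m$ ($r$ near $1$) is handled crudely via Lemma~\ref{lem:s R^i Q}-type bounds. For large $r$, Lemma~\ref{lem:s R^i Q} gives $M(r)\lesssim (rj)^{-2}$ with $R\to1$ behaviour, so the terms decay geometrically once $\Phi(r)\le q\gl$ with $q<1$.

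The main obstacle is establishing the deficit of $\Phi$ below $\gl$. Near a maximizer $s_r$ of $|Q(\cdot,r)|$ (so $\partial_sQ=0$, $s_r\in\Lambda_r$), the inequality $\frac{r^r}{(r-1)^{r-1}}x^{r-1}(1-x)\le 1-c\,(x-(1-1/r))^2$ gives a loss proportional to $h(r)^2$, by the definition \eqref{hr}. Away from the zeros $r_k^*$ of $h$ this loss is uniformly positive, so those $r$ contribute exponentially less than $\gl^j$. Near $r_k^*$, the lower bound \eqref{eqn:f_lower bound} gives $h(r)\ge a|r-r_k^*|$, hence $\Phi(r)\le\gl(1-c|r-r_k^*|^2)$: exactly the quadratic deficit needed. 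Assumption~\ref{assum:R}(iv) will be used to ensure that the sign case $R>0$ is the relevant one and that $|\gamma|=\gl$ is attained at only one $s_0$, so there are finitely many peaks and constants stay uniform. For $R(s)<0$ we use $|\beta|/(1-|R|)<\gl-\epsilon$ there, so those $s$ give a uniform geometric gain. Assumption~\ref{assum:R}(iii) with $h(1)\ne0$ excludes a peak at the endpoint $r=1$. Summing the finitely many Gaussian-type bumps with the geometrically small remainder gives $C\gl^j$ with $C$ independent of $j\ge2$.
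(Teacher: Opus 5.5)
Your proposal is correct in substance, and its skeleton is the paper's: Stirling's bound on $\binom{i}{j}$, a split according to $r=i/j$, a tail regime $r\ge r_0$, and Gaussian summation of width $\sqrt{j}$ around the simple zeros $r_k^*$ of $h$, which is exactly Step (iii). The genuine difference is the intermediate regime where $h(r)\ge h_0>0$.

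In that regime the paper (Step (ii)) proves $|\gamma(s_{i,j})|\le c_0<\gl$ at the maximizer by a contradiction argument through \eqref{eqn:Q,gamma}. This is where the nondegeneracy $e^{-s_0}+R'(s_0)\neq 0$ of Assumption~\ref{assum:R}(iv) enters. You instead keep $|\gamma|\le\gl$ and take the loss from the other factor $g_r(x)=\frac{r^r}{(r-1)^{r-1}}x^{r-1}(1-x)$. Since the maximizer $s_r$ lies in $\Lambda_r$, the bound $|R(s_r)-(1-r^{-1})|\ge h(r)\ge h_0$ gives $\Phi(r)\le\gl(1-ch_0^2)$ directly; this is the mechanism the paper uses only near $r_k^*$. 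Your route buys three things:
\begin{enumerate}
\item it is shorter and treats both regimes uniformly;
\item it handles maximizers with $R(s_r)\le 0$ explicitly via $\sup_{\{R\le 0\}}|\gamma|<\gl$, a case the paper does not treat separately in Step (iii);
\item it uses from (iv) only that $s_0$ is the unique maximizer and $R(s_0)>0$.
\end{enumerate}
Your normalization $\Phi\le\gl$ also explains the paper's $r_0$: the bound $|e^{-s}-R(s)|\le\eta(1-|R(s)|)^2$ gives $\Phi(r)\le\frac{4\eta}{r+1}\bigl(1-\frac{1}{r+1}\bigr)^{r}$.

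Two endpoint statements need fixing.

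\emph{Large $r$.} Lemma~\ref{lem:s R^i Q} gives $M(r)\lesssim r^{-2}$, not $(rj)^{-2}$, hence $\Phi(r)\lesssim 1/r$. It is this decay, not merely $\Phi(r)\le q\gl$, that makes the tail summable; the uniform bound alone leaves infinitely many terms of size $j^{-1/2}(q\gl)^j$. With the decay,
\[
\sum_{m\ge (r_0-1)j}\Phi(1+m/j)^j\lesssim j\int_{r_0}^{\infty}(C/r)^j\,\mathrm{d}r+(C/r_0)^j\lesssim (C/r_0)^{j-1}
\]
for $j\ge 2$, which is $O(\gl^j)$ once $r_0\ge C/\gl$.

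\emph{Small $m$.} Lemma~\ref{lem:s R^i Q} does not help here: it controls only a single power of $e^{-s}-R(s)$, and its constant for the $j$-th power grows superexponentially in $j$. Use instead $M(r)\le\sup_{s}|e^{-s}-R(s)|<\gl$. The inequality is strict because $R(s_0)>0$ and $\gamma(s)\to 0$ as $s\to 0$ and as $s\to\infty$. Combined with $r^r/(r-1)^{r-1}\to 1$ as $r\downarrow 1$, this gives $\Phi\le c_1<\gl$ on $(1,1+\epsilon]$. That block then contributes $O(j\,c_1^j)$, without appealing to $h(1)\neq 0$.
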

 
 To numerically verify Theorem \ref{thm:inv_gamma_lin}, let 
 $S(j) = \sum_{i=j}^{100} \binom{i}{j} \sup_{s \in (0, \infty)} |R(s)|^{i-j} |e^{-s} - R(s)|^{j}$.
 The plot of $S(j)$ for three different stability functions in Fig.~\ref{fig:S(j)} agrees with the prediction by Theorem \ref{thm:inv_gamma_lin}. 
 \begin{figure}[htbp]
 	\centering\setlength{\tabcolsep}{0pt}
 	\begin{tabular}{ccc}
 		\includegraphics[width=.32\textwidth]{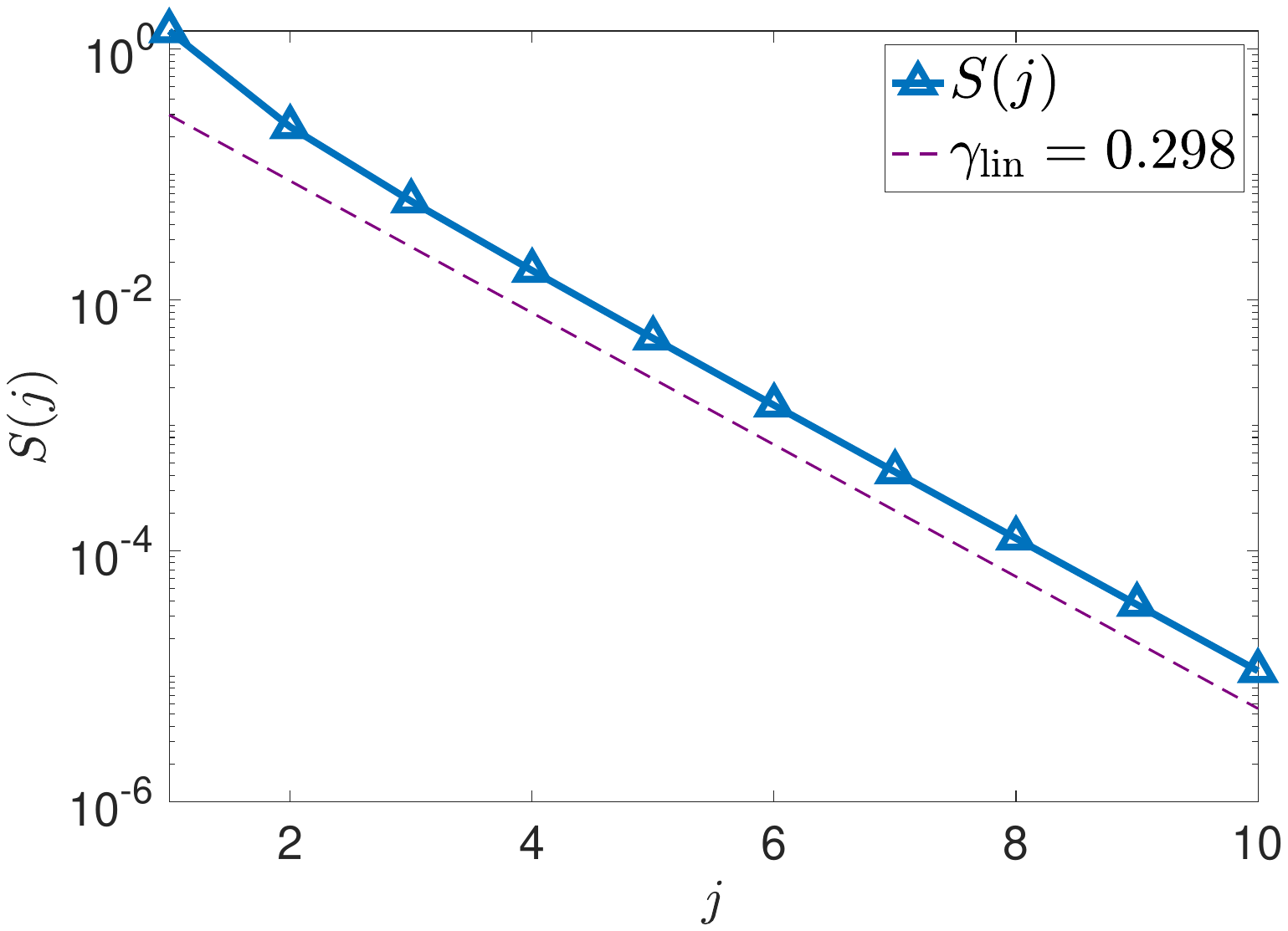}
 		& \includegraphics[width=.32\textwidth]{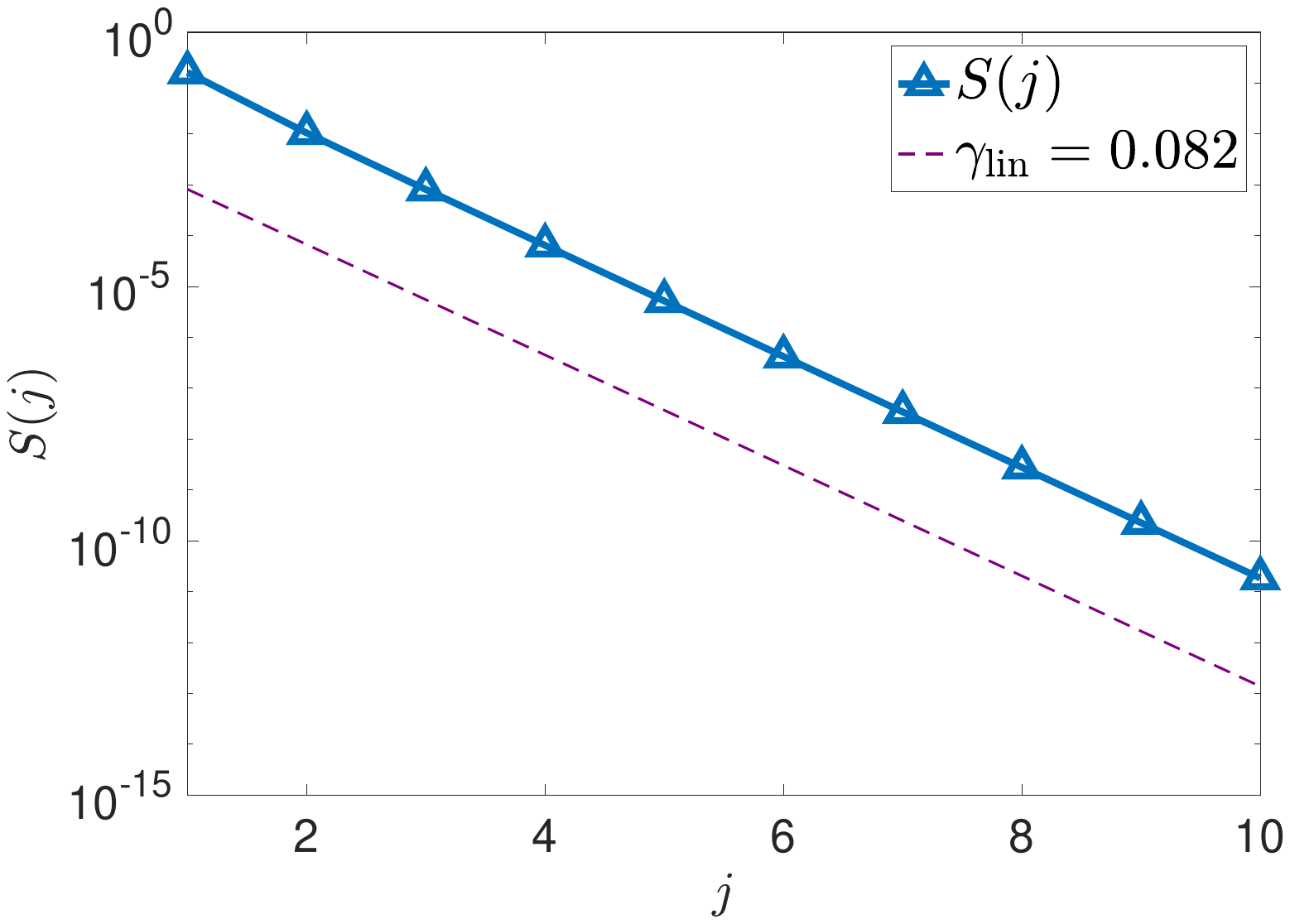}
 		&\includegraphics[width=.32\textwidth]{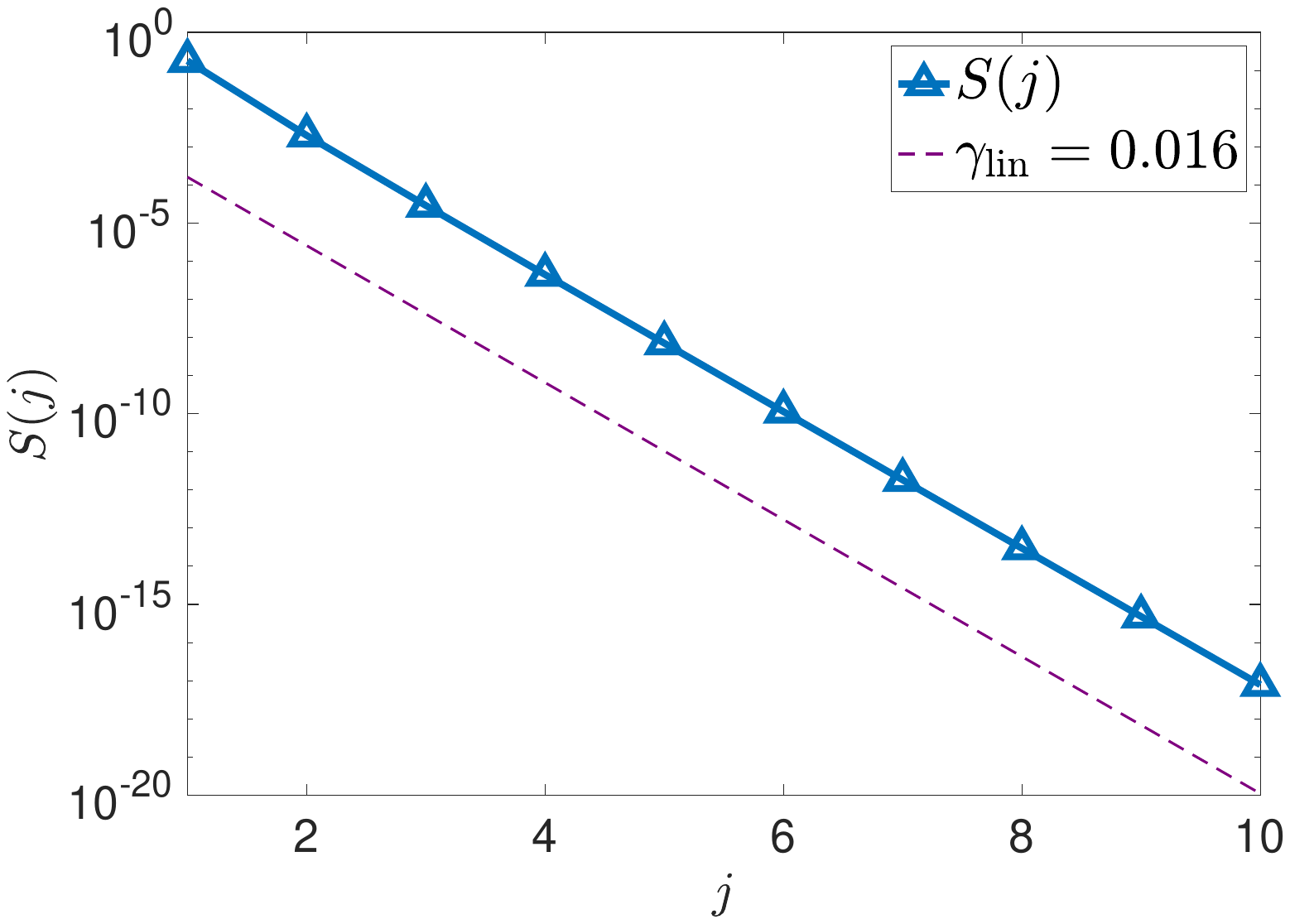}\\
 		(a) BE & (b) Two-stage Lobatto IIIC & (c) OCP
 	\end{tabular}
 	\caption{The graph of $S(j)$ for three CPs: BE, two-stage Lobatto IIIC and OCP.}
 	\label{fig:S(j)}
 \end{figure}
 
 \subsection{Error estimates}
 Next, we establish the linear convergence of the numerical solution by the parareal scheme \eqref{eqn:fp}. First, we present an estimate on the second summation in \eqref{eqn:para_whole}.
 \begin{lemma}\label{lem:lin}
 	Let the operators $\alpha$ and $\beta$ be defined in \eqref{eqn:al-beta}. Then the following estimate holds
 	\begin{align*}
 		\bigg\| \beta^{k+1} \sum_{i=k}^{n} \binom{i}{k} \alpha^{i-k} E_{n-i}^{0}\bigg\|_{\dH q} &\leq \gl^{k+1} \sum_{i=0}^{n} \| E_{i}^0\|_{\dH q},\quad q=0,2.
 	\end{align*}
 \end{lemma}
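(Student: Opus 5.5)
The estimate will follow from the spectral calculus for the self-adjoint operator $A$ together with Lemma \ref{lem:gamma_lin}. The operators $\alpha=R(\Delta TA)$ and $\beta=e^{-\Delta TA}-R(\Delta TA)$ are functions of $A$, so they commute with $A^{q/2}$. For $q=0,2$ we therefore have $\|\alpha^{m}\beta^{l}v\|_{\dH q}=\|\alpha^m\beta^l A^{q/2}v\|_{L^2\II}$, and it suffices to work in $L^2\II$ with $v$ replaced by $A^{q/2}E^0_{n-i}$.

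First I reorder the sum by the index of the initial error. The triangle inequality gives
\begin{equation*}
\bigg\| \beta^{k+1} \sum_{i=k}^{n} \binom{i}{k} \alpha^{i-k} E_{n-i}^{0}\bigg\|_{\dH q}
\le \sum_{i=k}^{n} \big\|\tbinom{i}{k}\alpha^{i-k}\beta^{k+1}\big\|_{\mathcal L(L^2\II)} \|E^0_{n-i}\|_{\dH q}.
\end{equation*}
Next I bound each operator norm by its spectral supremum:
\begin{equation*}
\big\|\tbinom{i}{k}\alpha^{i-k}\beta^{k+1}\big\|_{\mathcal L(L^2\II)}\le \sup_{s\in(0,\infty)} \binom{i}{k}|R(s)|^{i-k}|e^{-s}-R(s)|^{k+1},
\end{equation*}
using $s=\Delta T\lambda_j$.

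Each of these suprema is at most $\gl^{k+1}$. To see this, fix $i$ and $s$. A single nonnegative term of the series in Lemma \ref{lem:gamma_lin} (with $j=k$) is bounded by the whole series, which Lemma \ref{lem:gamma_lin} bounds by $\gl^{k+1}$:
\begin{equation*}
\binom{i}{k}|R(s)|^{i-k}|e^{-s}-R(s)|^{k+1}\le \sum_{m=k}^\infty \binom{m}{k}|R(s)|^{m-k}|e^{-s}-R(s)|^{k+1}\le\gl^{k+1}.
\end{equation*}
Since the right-hand side does not depend on $s$, taking the supremum over $s$ keeps the bound. Summing over $i$ then gives $\gl^{k+1}\sum_{i=k}^n\|E^0_{n-i}\|_{\dH q}\le \gl^{k+1}\sum_{i=0}^n\|E^0_i\|_{\dH q}$, which is the claim.

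I do not expect a genuine obstacle here. The only point needing care is that bounding term by term (rather than putting the supremum inside the series, as Theorem \ref{thm:inv_gamma_lin} does) is enough, because the final bound is a plain sum of the $\|E_i^0\|$. If one wanted instead to bound the whole operator $\beta^{k+1}\sum_i\binom{i}{k}\alpha^{i-k}$ applied to a common vector, Lemma \ref{lem:gamma_lin} would apply directly. Here the vectors $E^0_{n-i}$ differ, so the term-by-term route is the natural one.
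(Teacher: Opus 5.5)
Your proof is correct, but it takes a different route from the paper.

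\textbf{Your route.} You apply the triangle inequality first. You then bound each operator $\binom{i}{k}\alpha^{i-k}\beta^{k+1}$ separately by $\gl^{k+1}$, keeping only one term of the series in Lemma~\ref{lem:gamma_lin}. This gives exactly the stated $\ell^1$-type bound.

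\textbf{The paper's route.} The paper works one eigencomponent at a time. For each $\varphi_j$ it pulls out $\max_{0\le i\le n}|(E^0_{n-i},\varphi_j)|$. It then applies Lemma~\ref{lem:gamma_lin} to the whole series at $s=\Delta T\lambda_j$, which gives $|(\beta^{k+1}\sum_{i}\binom{i}{k}\alpha^{i-k}E^0_{n-i},\varphi_j)|\le\gl^{k+1}\max_i|(E^0_{n-i},\varphi_j)|$. Finally it bounds $\max_i|\cdot|^2\le\sum_i|\cdot|^2$ before summing over $j$.

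\textbf{What each buys.} The paper's argument handles the differing vectors $E^0_{n-i}$ without giving up the full series. It therefore proves the sharper estimate $\gl^{k+1}\big(\sum_{i=0}^n\|E_i^0\|^2_{\dot H^q(\Omega)}\big)^{1/2}$, which is at most $\sqrt{n+1}\max_i\|E_i^0\|_{\dot H^q(\Omega)}$. Your intermediate bound grows like $(n-k+1)\max_i\|E_i^0\|_{\dot H^q(\Omega)}$. The stated inequality follows from the paper's version because the $\ell^2$ sum is dominated by the $\ell^1$ sum.

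Your route is shorter and avoids the eigen-expansion bookkeeping. It is also flexible in another direction. For $k\ge1$ you can replace the per-term bound by Theorem~\ref{thm:inv_gamma_lin}, which puts the supremum inside the sum. The same triangle-inequality argument then gives $C\gl^{k+1}\max_{0\le i\le n}\|E_i^0\|_{\dot H^q(\Omega)}$, independent of $n$. The price is a generic constant $C$ that the lemma as stated does not carry.
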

 
 \begin{proof}  
 	Recall that $\{(\lambda_j, \varphi_j)\}_{j\geq1}$ are the eigenpairs of the self-adjoint operator $A$. Then the definitions of the operators $\alpha$ and $\beta$ in \eqref{eqn:al-beta} and Lemma \ref{lem:gamma_lin} lead to
 	\begin{align*}
 		&\Big|\Big( \beta^{k+1} \sum_{i=k}^{n} \binom{i}{k} \alpha^{i-k} E_{n-i}^{0}, \varphi_j \Big)\Big| = | e^{-\Delta T \varphi_j} - R(\Delta T \lambda_j)|^{k+1} \Big| \sum_{i=k}^{n} \binom{i}{k} (R(\Delta T \lambda_j))^{i-k} \Big| |(E_{n-i}^{0},\varphi_j)| \\
 		\le& \max_{0\leq i\leq n}|(E_{n-i}^0, \varphi_j) || e^{-\Delta T \lambda_j} - R(\Delta T \lambda_j) |^{k+1}  \sum_{i=k}^{n} \binom{i}{k} |R(\Delta T \lambda_j)|^{i-k}
 		\le \gl^{k+1} \max_{0\leq i\leq n}|(E_{n-i}^0, \varphi_j)|.
 	\end{align*}
 	Then we obtain the desired estimates for $q=0,2$,
 	\begin{align*}
 		\bigg\| \beta^{k+1} \sum_{i=k}^{n} \binom{i}{k} \alpha^{i-k} E_{n-i}^{0}\bigg\|_{\dot H^q (\Omega)}^2
 		& \le \gl^{2(k+1)} \sum_{j=1}^{\infty} \lambda_j^{2q}\max_{0 \le i \le n} |( E_{n-i}^{0}, \varphi_j)|^{2} \\
 		&\le \gl^{2(k+1)} \sum_{i=0}^{n}    \sum_{j=1}^{\infty} \lambda_j^{2q}  |( E_{i}^{0}, \varphi_j)|^{2} \le \gl^{2(k+1)} \sum_{i=0}^{n} \| E_{i}^{0}\|_{\dot H^q (\Omega)}^{2}.
 	\end{align*}
 \qed \end{proof}
 
 The next lemma provides an estimate on the first summation  in \eqref{eqn:para_whole}. 
 \begin{lemma}\label{lem:f-f}
 	Let 
 	${\rm I}_j= \sum_{i=j}^{n-1} \binom{i}{j} \alpha^{i-j} \beta^{j} \Delta TP( \Delta TA ) ( f( U_{n-i}^{k+1-j} ) -f( U_{n-i}))$, and for $q=0,2$, $e_{n,q}^k=\max_{0\leq i\leq n}\|E_i^k\|_{\dH q}$. Then there exists  $C$ depending on $f$ but independent of $n$ and $k$ such that 
 	\begin{align*}
 		\| {\rm I}_j\|_{L^{2}(\Omega)} 
 		\leq& \begin{cases}C\Delta T \sum_{i=1}^{n} \| E_{i}^{k+1}\|_{L^{2}(\Omega)} ,&j=0,\\ C \Delta T \ln ne_{n,0}^k ,&j=1,\\ C \Delta T \gl^j  e_{n,0}^{k+1-j}, &j\geq 2,\end{cases}\\
 		\| {\rm I}_j\|_{\dot{H}^{2}(\Omega)} 
 		\leq& \begin{cases}C \Delta T^{\frac{1}{2}}  \sum_{i=1}^{n} \frac{1}{\sqrt{i+1}} \| E_{n-i}^{k+1}\|_{\dot{H}^{2}(\Omega)} ,&j=0,\\C \Delta T^{\frac{1}{2}}  e_{n,2}^k ,&j=1,\\ C \Delta T^{\frac{1}{2}}\gl^j e_{n,2}^{k+1-j},&j\geq 2.\end{cases}
 	\end{align*}
 \end{lemma}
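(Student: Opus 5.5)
Fix $j$ and bound each summand of ${\rm I}_j$ spectrally, using the functional calculus of $A$. For each $i$, the operator $\alpha^{i-j}\beta^j\Delta T P(\Delta TA)$ is the multiplier $R(s)^{i-j}(e^{-s}-R(s))^j\,\Delta T P(s)$ with $s=\Delta T\lambda$. The nonlinear difference is handled in $L^2$ by the Lipschitz bound from Assumption \ref{assum:f}:
\[
\|f(U^{k+1-j}_{n-i})-f(U_{n-i})\|_{L^2\II}\le C\|E^{k+1-j}_{n-i}\|_{L^2\II}.
\]
In $\dot H^2$ we want to use the smoothing in the multiplier. The natural route writes $\|A\,\alpha^{i-j}\beta^j\Delta TP(\Delta TA)g\|_{L^2\II}$ as $\|\Delta T^{1/2}\,(\Delta T A)^{1/2}\alpha^{i-j}\beta^jP(\Delta TA)\,A^{1/2}g\|$. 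We then bound $\|A^{1/2}(f(U^{k+1-j})-f(U))\|=\|f(U^{k+1-j})-f(U)\|_{\dot H^1}\le C\|E^{k+1-j}\|_{H^2}$ by \eqref{eqn:basic-est3}, with the $H^2$ and $\dot H^2$ norms equivalent on $\Dom(A)$. So for every $j$ we get the multiplier $s^\kappa R^{i-j}(e^{-s}-R)^jP(s)$ with $\kappa=0$ in $L^2$ and $\kappa=1/2$ in $\dot H^2$, carrying the prefactor $\Delta T$ or $\Delta T^{1/2}$ respectively. Since $|P|\le C$ by Assumption \ref{assum:R}(ii), the factor $P$ can be dropped.

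For $j=0$, the multiplier is $s^\kappa R(s)^i P(s)$. In $L^2$ we use $|R|<1$ and $|P|\le C$, so each term is bounded by $C\Delta T\|E^{k+1}_{n-i}\|$, and summing gives the first claim. In $\dot H^2$ we use the bound $\sup_s|s^{1/2}R(s)^i|\le C(i+1)^{-1/2}$. This is \eqref{eqn:coro_ineq} with $j=k=0$, $\kappa=1/2$; the proof of that lemma also handles $P$ because $|P|$ is bounded. This yields $C\Delta T^{1/2}\sum_i (i+1)^{-1/2}\|E^{k+1}_{n-i}\|_{\dot H^2}$.

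For $j=1$, we take $\binom{i}{1}=i$ and apply \eqref{eqn:coro_ineq} with $j=1$ (exponent $i-1\ge 1$ once $i\ge2$, and $i=1$ is trivial) to get a bound of order $i\cdot i^{-2-\kappa}$. For $\kappa=0$ the sum $\sum_{i=1}^{n-1} i^{-1}$ is bounded by $C\ln n$ times $e^k_{n,0}$. For $\kappa=1/2$ the sum $\sum_i i^{-3/2}$ is bounded, giving $C\Delta T^{1/2}e^k_{n,2}$.

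For $j\ge2$, we pull $\max_i\|\cdot\|\le Ce^{k+1-j}_{n,q}$ out of the sum and bound the remaining scalar series $\sum_{i\ge j}\binom{i}{j}\sup_s|s^\kappa R^{i-j}(e^{-s}-R)^j|$. For $\kappa=0$, Theorem \ref{thm:inv_gamma_lin} gives $C\gl^j$ directly.

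The main obstacle is $\kappa=1/2$ with $j\ge2$. Theorem \ref{thm:inv_gamma_lin} does not include the weight $s^{1/2}$. The first plan is to avoid needing it by placing all of the $\dot H^2$ regularity on the nonlinear difference. The bound \eqref{eqn:basic-est3} only gives $\dot H^1$, but we can write $A=A^{1/2}\cdot A^{1/2}$ and use $|\Delta T P(s)\,s^{1/2}|\le C\Delta T$ together with $s=\Delta T\lambda$. This gives $\|A\,\Delta TP(\Delta TA)g\|\le \Delta T^{1/2}\sup_s|s^{1/2}P(s)|\,\|g\|_{\dot H^1}$, and $\sup_s|s^{1/2}P(s)|\le C$ by Assumption \ref{assum:R}(ii) (interpolating $|P|$ and $|sP|$). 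Then the remaining multiplier $R^{i-j}(e^{-s}-R)^j$ has no weight, and Theorem \ref{thm:inv_gamma_lin} applies unchanged. The same trick also simplifies $j=0,1$ in $\dot H^2$, though there the decay from Lemma \ref{lem:s R^i Q} is still needed for $j=0$ and convenient for $j=1$.
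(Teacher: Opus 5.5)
Your proposal is correct and follows the paper's own route. In $L^2$ you use the Lipschitz bound on $f$. In $\dot H^2$ you use the splitting $A\,\Delta T P(\Delta TA)=\Delta T^{1/2}(\Delta TA)^{1/2}P(\Delta TA)A^{1/2}$ together with \eqref{eqn:basic-est3}. For $j=0,1$ you invoke \eqref{eqn:coro_ineq}, and for $j\ge2$ Theorem~\ref{thm:inv_gamma_lin}; there, absorbing $s^{1/2}$ into $P$ via $\sup_s|s^{1/2}P(s)|\le C$ is exactly what the paper's ``follows similarly'' relies on without saying.

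One small correction: for $j=1$ in $\dot H^2$, the $\kappa=\tfrac12$ decay is needed, not merely convenient. Absorbing $s^{1/2}$ into $P$ there would leave $\sum_i i^{-1}$ and an extra factor $\ln n$ that the stated bound does not allow; your actual $j=1$ argument already uses the decay, so this is only a mislabel.
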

 \begin{proof}
 	We analyze ${\rm I}_0$, ${\rm I}_1$ and ${\rm I}_{j}$ ($j\geq 2$) separately. First we prove the $L^2(\Omega)$ estimate. By the Lipschitz continuity of $f$, we have
 	$$ \|f(U_{n-i}^{k+1-j})-f(U_{n-i})\|_{L^2(\Omega)}\leq C\|E_{n-i}^{k+1-j}\|_{L^2(\Omega)}.$$ 
 	The bound on ${\rm I}_0$ follows directly from the stability of $\alpha$ and $P$.
 	For the case $j=1$, by the stability of $P$,
 	\begin{align*}
 		\|{\rm I}_1\|_{L^2(\Omega)} &\leq C\Delta Te_{n,0}^k\sum_{i=1}^{n-1} \binom{i}{1}\left\|  \alpha^{i-1} \beta \right\|_{\mathcal{L} (L^2 (\Omega))}.
 	\end{align*}
 	Meanwhile, by the estimate \eqref{eqn:coro_ineq} (with $\kappa, k=0$ and $j=1$), we derive
 	\begin{align*}
 		\sum_{i=1}^{n-1} \binom{i}{1}\left\|  \alpha^{i-1} \beta \right\|_{\mathcal{L} (L^2 (\Omega))}&\leq  \sum_{i=1}^{n-1} \binom{i}{1} \sup_{s\in( 0,\infty)} |(  R(s))^{i-1} ( e^{-s}-R(s)) | \le C \sum_{i=1}^{n-1} \frac1i 
 		\le  C   \ln n .
 	\end{align*}
 	Thus the bound on $\|{\rm I}_1\|_{L^2(\Omega)}$ follows.
 	Similarly, for $j \geq 2$, Theorem \ref{thm:inv_gamma_lin} implies
 	\begin{align*}
 		\|{\rm I}_j \|_{L^{2}(\Omega)}
 		&\leq C\Delta Te _{n,0}^{k+1-j} \sum_{i=j}^{n-1} \binom{i}{j} \sup_{s\in \left( 0,\infty \right)} |\left( R\left( s \right) \right)^{i-j} \left( e^{-s}-R\left( s \right) \right)^{j} | \leq C \Delta T\gl^j e_{n,0}^{k+1-j}.
 	\end{align*}
 	Next we estimate the $\dot{H}^2(\Omega)$ norm. For the term ${\rm I}_0$, by Lemma \ref{lem:f}, we obtain
 	\begin{align*}
 		\left\|{\rm I}_0  \right\|_{\dot{H}^2(\Omega)} \leq & \Delta T^{\frac{1}{2}} \left\| \sum_{i=0}^{n-1} \alpha^{i} {( \Delta T A)^{\frac{1}{2}}} P( \Delta TA) ( A^{\frac{1}{2}}( f( U_{n-i}^{k+1} ) - f( U_{n-i} )) ) \right\|_{L^{2}(\Omega)} \\
 		\leq &C \Delta T^{\frac{1}{2}}  \sum_{i=0}^{n-1}  \| \alpha^{i} {(\Delta T A)^{\frac{1}{2}}} P( \Delta TA ) \|_{\mathcal{L}(L^2 \II )} \|E_{n-i}^{k+1}\|_{\dH2}.
 	\end{align*}
 	By Assumption \ref{assum:R} (ii), $|sP(s)|$ and $|P(s)|<C$ for all $s\in (0,\infty)$, and thus $|s^\frac12 P(s)|\leq C$. Then, by the estimate \eqref{eqn:coro_ineq} (with $\kappa=\frac12$, $j=0$ and $k=0$), we derive
 	\begin{align*}
 		&\quad \sum_{i=0}^{n-1} {\| \alpha^{i} (\Delta T A)^{\frac{1}{2}} P( \Delta TA ) \|_{\mathcal{L}(L^2 \II )}}  \|E_{n-i}^{k+1}\|_{\dH2}\\
 		&= \|(\Delta T A)^{\frac{1}{2}}P(\Delta TA)\|_{\mathcal{L}(L^2 \II)}\|E_n^{k+1}\|_{\dH2} +  \sum_{i=1}^{n-1} {\| \alpha^{i} ( \Delta T A)^{\frac{1}{2}} P( \Delta TA ) \|_{\mathcal{L}(L^2 \II )}} \|E_{n-i}^{k+1} \|_{\dH2} \\
 		& \leq \sup_{s\in \left( 0,\infty \right)} |s^{\frac{1}{2}}P\left( s \right) | \|E_n^{k+1}\|_{\dH2} + C\sum_{i=1}^{n-1} \sup_{s\in \left( 0,\infty \right)} |\left( R\left( s \right) \right)^{i} s^{\frac{1}{2}} P(s)| \|E_{n-i}^{k+1}\|_{\dH2}\\
 		&\leq C\|E_n^{k+1} \|_{\dH2}+C\sum_{i=1}^{n-1}\frac{1}{\sqrt{i+1}}\|E_{n-i}^{k+1}\|_{\dH2}\leq C\sum_{i=0}^{n-1} \frac{1}{\sqrt{i+1}} \|E_{n-i}^{k+1}\|_{\dH2}.
 	\end{align*}
 	For the term ${\rm I}_1$, by Lemma \ref{lem:f} and the estimate \eqref{eqn:coro_ineq} (with $\kappa=\frac12$, $j=1$ and $k=0$), we obtain
 	\begin{align*}
 		\left\| {\rm I}_1 \right\|_{\dot{H}^2(\Omega)} = &\Delta T^{\frac{1}{2}} \left\| \sum_{i=1}^{n-1} \binom{i}{1} \alpha^{i-1} \beta ( \Delta T A)^{\frac{1}{2}} P(\Delta TA) A^{\frac{1}{2}}( f( U_{n-i}^{k}) - f( U_{n-i})) \right\|_{L^{2}(\Omega)} \\
 		\leq& C \Delta T^{\frac{1}{2}} \sum_{i=1}^{n-1} \binom{i}{1} \|  \alpha^{i-1} \beta( \Delta T A )^{\frac{1}{2}} P( \Delta TA)\|_{\mathcal{L}(L^2 \II )} \|E_{n-i}^{k}\|_{\dH2} \\
 		\leq & C \Delta T^{\frac{1}{2}}  \sum_{i=1}^{n-1} i^{-\frac{3}{2}} e_{n,2}^k  \leq   C \Delta T^{\frac{1}{2}} e_{n,2}^k.
 	\end{align*}
 	The bound on ${\rm I}_j$ $(j \geq 2)$ follows similarly using Theorem \ref{thm:inv_gamma_lin}. 
 \qed \end{proof}

 For the last summation in \eqref{eqn:para_whole}, we first bound the nonlinear term $\text{Res}(u,v)$. Below  $\overline{v}(s)$ denotes the exact solution to problem \eqref{eqn:pde} at time $s$ with the initial value $v \in L^2 \II$.
 \begin{lemma}\label{lem:f_-2}
 	For $q=0$ or $2$, the following estimate holds for $s \in (0,\Delta T)$,
 	\begin{align*} 
 		\| ( f( \overline{U}_{n-i}^{k} (s)) - f(U_{n-i}^{k})) - ( f( \overline{U}_{n-i} ( s)) - f( U_{n-i}))\|_{\dH{q-2}}&\leq  C \Delta T \|E_{n-i}^k\|_{\dH{q}},
 	\end{align*}
 	where the constant $C$ depends on $f$ and $(1+ \|U_{n-i}^k\|_{\dH2} + \|U_{n-i}\|_{\dH2})$, but is independent of $s$. 
 \end{lemma}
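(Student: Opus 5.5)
The plan is to write the quantity as a double difference and use the fundamental theorem of calculus twice. Set $a=U_{n-i}^k$, $b=U_{n-i}$, $\bar a(s)=\overline{U}_{n-i}^k(s)$ and $\bar b(s)=\overline{U}_{n-i}(s)$. With $\delta_a(s)=\bar a(s)-a$ and $\delta_b(s)=\bar b(s)-b$, we have
\begin{equation*}
f(\bar a)-f(a)=\int_0^1 f'(a+\theta\delta_a)\,\d\theta\;\delta_a,
\end{equation*}
and similarly for $b$. The difference then splits as $D_1+D_2$, where
\begin{equation*}
D_1=\int_0^1 f'(a+\theta\delta_a)\,\d\theta\,(\delta_a-\delta_b),\qquad
D_2=\int_0^1\big(f'(a+\theta\delta_a)-f'(b+\theta\delta_b)\big)\,\d\theta\,\delta_b .
\end{equation*}
The factor $\Delta T$ should come from the smallness of $\delta_a,\delta_b$ on $(0,\Delta T)$, and the factor $\|E_{n-i}^k\|_{\dH q}$ from either $\delta_a-\delta_b$ or $a-b$.

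\textbf{Step 1 (bounds on $\delta$).} Lemma \ref{lem:regu_U_n}, applied with initial data $a$ and $b$, bounds $\|\partial_t\bar a\|_{L^2}$ and $\|\partial_t\bar b\|_{L^2}$ by $C(1+\|a\|_{\dH2})$ and the analogous quantity for $b$. Integrating in time gives $\|\delta_a(s)\|_{L^2},\|\delta_b(s)\|_{L^2}\le C\Delta T$. For the difference $w=\bar a-\bar b$, I will differentiate the equation $w'+Aw=f(\bar a)-f(\bar b)$. This gives
\begin{equation*}
w'(0)=-A(a-b)+f(a)-f(b),
\end{equation*}
and a Gronwall argument as in Lemma \ref{lem:regu_U_n} yields $\|\partial_t w\|_{\dH{q-2}}\le C\|a-b\|_{\dH q}$. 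For $q=2$, Lemma \ref{lem:f} handles the nonlinear terms in $L^2$. For $q=0$, I work in $\dH{-2}$, using \eqref{eqn:basic-est1} to control terms such as $f'(\bar a)\partial_t w$ and $(f'(\bar a)-f'(\bar b))\partial_t\bar b$. Integrating over $(0,s)$ then gives $\|\delta_a-\delta_b\|_{\dH{q-2}}\le C\Delta T\|E^k_{n-i}\|_{\dH q}$.

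\textbf{Step 2 (the terms).} For $q=2$, I estimate in $L^2$. The term $D_2$ is bounded by $\|f''\|_\infty(\|a-b\|_{L^\infty}+\|\delta_a-\delta_b\|_{L^\infty})\|\delta_b\|_{L^2}$, and the embedding $H^2\subset L^\infty$ for $d\le3$ gives the required size. The term $D_1$ needs $\|\delta_a-\delta_b\|_{L^2}\le C\Delta T\|E\|_{\dH2}$, which follows from Step 1 in $\dH0$ after interpolating, or by running Step 1 directly in $L^2$ with \eqref{eqn:basic-est2}.

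For $q=0$, I estimate in $\dH{-2}$ by duality against $\phi\in\dH2\subset L^\infty$. In $D_2$, \eqref{eqn:basic-est1}, or rather its $\theta$-integrated analogue, bounds the term by $C(\|a-b\|_{L^2}+\|\delta_a-\delta_b\|_{L^2})\|\delta_b\|_{L^2}$. Here $\|\delta_a-\delta_b\|_{L^2}\le C\|a-b\|_{L^2}$ by $L^2$ stability of the flow. In $D_1$, the multiplier $g=\int_0^1 f'(\cdot)\,\d\theta$ must act boundedly on $\dH{-2}$. By duality this amounts to $\|g\phi\|_{\dH2}\le C\|g\|_{H^2}\|\phi\|_{H^2}$, with $g\phi$ vanishing on the boundary. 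The last part of Lemma \ref{lem:f} supplies the $H^2$ bound on $f'$, with constants depending on $\|a\|_{\dH2}$ and $\|\bar a\|_{\dH2}$.

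\textbf{Main obstacle.} The hardest step will be the $\dH{-2}$ estimate of $\partial_t w$ in Step 1 for $q=0$. The $L^2$ bound itself is clean; the difficulty is the initial layer from $-A(a-b)$, which lies only in $\dH{-2}$ when $a-b\in L^2$, together with the multiplier bounds in negative norms. I would first work with the mild formulation in $\dH{-2}$, where $e^{-tA}$ is a contraction, and treat the nonlinear terms with \eqref{eqn:basic-est1} and the $H^2$-multiplier property of $f'(\bar a)$.
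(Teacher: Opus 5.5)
Your argument is correct, but it is organized differently from the paper's.

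The paper applies a mean value theorem in the initial-data variable. With $w_\theta=\theta U_{n-i}^k+(1-\theta)U_{n-i}$ and $D_\theta(s)$ the derivative of the flow map at $w_\theta$, it writes the double difference as $f'(\overline w_\theta(s))(D_\theta(s)-I)E_{n-i}^k+(f'(\overline w_\theta(s))-f'(w_\theta))E_{n-i}^k$. The first piece is bounded through the linearized equation for $D_\theta$ and $\|A^{-1}(e^{-sA}-I)\|_{\mathcal{L}(L^2(\Omega))}\le s$. The second is bounded through \eqref{eqn:basic-est1} (or \eqref{eqn:basic-est2} for $q=2$) together with $\|\overline w_\theta(s)-w_\theta\|_{L^2(\Omega)}\le C\Delta T$.

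You instead Taylor-expand $f$ pointwise along the time increments. Your $D_1$ is the analogue of the paper's first piece; indeed $\int_0^1(D_\theta(s)-I)E_{n-i}^k\,\d\theta=\delta_a-\delta_b$. In $D_2$ the roles are exchanged: the factor $\Delta T$ sits on $\delta_b$ and the error sits on $g_a-g_b$.

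What your route buys is elementarity. You need no Fr\'echet differentiability of the nonlinear flow map and no abstract mean value theorem, only Lipschitz stability of your $w=\overline a-\overline b$.

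The cost shows up at $q=2$. Bounding $\|g_a-g_b\|_{L^\infty}$ requires $\|\overline a(s)-\overline b(s)\|_{L^\infty}\le C\|a-b\|_{\dH{2}}$. This is an $\dot H^2$-Lipschitz estimate for the nonlinear flow, which the paper never needs because it keeps $E_{n-i}^k$ at the initial time. If you prove it through $v=\partial_t w$, note that \eqref{eqn:basic-est2} does not apply to $(f'(\overline a)-f'(\overline b))\partial_t\overline b$, since $\partial_t\overline b$ is not uniformly in $H^2(\Omega)$. Instead bound this term by $C\|w\|_{L^\infty}\|\partial_t\overline b\|_{L^2}$, then close with $\|w\|_{\dH{2}}\le\|v\|_{L^2}+\|f(\overline a)-f(\overline b)\|_{L^2}$ and Gronwall.

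Conversely, the step you single out as the main obstacle is a one-liner. The mild formulation gives $\delta_a-\delta_b=(e^{-sA}-I)(a-b)+\int_0^s e^{-(s-\tau)A}(f(\overline a(\tau))-f(\overline b(\tau)))\,\d\tau$. Hence $\|\delta_a-\delta_b\|_{\dH{-2}}\le Cs\|a-b\|_{L^2}$ and $\|\delta_a-\delta_b\|_{L^2}\le Cs\|a-b\|_{\dH{2}}$ follow directly. There is no need to differentiate the equation in time or to use multiplier bounds in negative norms for this step.
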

 \begin{proof}
 	We only prove the case $q = 0$, since the case $q = 2$  is similar. Let $w_\theta =  \theta {U}_{n-i}^k + (1-\theta)U_{n-i}$ for $\theta \in [0,1]$ and let $\overline{w}_\theta(s)$ be the exact solution of \eqref{eqn:pde} at time $s$ with initial data $w_\theta$. Let $D_\theta(s) =  \frac{\partial \overline{w}_\theta( s)}{\partial w_\theta}$. The generalized mean value theorem
 	\cite[Theorem 9.2.3 (ii)]{suhubi2013functional} implies
 	\begin{align*}
 		&\|( f( \overline{U}_{n-i}^{k} (s)) - f( U_{n-i}^{k}))-( f( \overline{U}_{n-i}( s ) ) - f( U_{n-i}))\|_{\dH{-2}} \\
 		\le &  \sup_{\theta\in(0,1)} \| (f'( \overline{w}_\theta ( s)) D_\theta(s) - f'( w_\theta)) E_{n-i}^{k} \|_{\dH{-2}} \\
 		\leq & \sup_{\theta\in(0,1)}  \| f'( \overline{w}_\theta ( s ) ) ( D_\theta(s) - I)  E_{n-i}^{k}\|_{\dH{-2}}  \\
 		&  +  \sup_{\theta\in(0,1)}  \left\| ( f'( \overline{w}_\theta( s)) - f'( w_\theta))( E_{n-i}^{k}) \right\|_{\dH{-2}}:= {\rm I} + {\rm II}.
 	\end{align*}
 	We first bound the term ${\rm I}$. Note that for $u \in \dH{-2}$ and $v \in H^2 \II$, 
 	\begin{align*}
\|uv\|_{\dH{-2}}&=\sup_{\psi \in \dot{H^{2}} \II} \frac{( uv,\psi)}{\| \psi \|_{\dot{H^{2}} \II}} =\sup_{\psi \in \dot{H^{2}} \II} \frac{( u,v\psi)}{\| \psi \|_{\dot{H^{2}}\II}} \\
&\leq C\sup_{\psi \in \dot{H^{2}}\II} \frac{\|u\|_{\dH{-2}  }\|v\psi \|_{\dH{2}}}{\| \psi \|_{\dot{H^{2}}\II}}.
 	\end{align*}
 	For the case $d \leq 3$, we have the well-known bilinear estimate \cite[Theorem 4.39]{adams2003sobolev},
 	\begin{align*}
 		\|v\psi \|_{\dH{2}}&\leq \| v\|_{L^{\infty}(\Omega)} \| \psi \|_{{H}^{2}\II} +
 		2{\| \nabla v\|_{L^4\II}\| \nabla \psi \|_{L^4\II}} +\| v\|_{{H}^{2} \II } \| \psi \|_{L^{\infty}\II}\\
 		& \leq C\|v\|_{H^2\II}\|\psi\|_{\dot{H}^2 \II}.
 	\end{align*}
 	Thus, there holds $\|uv\|_{\dH {-2}}
 	\leq C\|u\|_{\dH{-2}}\|v\|_{H^2 \II}$. This directly implies 
 	\begin{align*} 
 		{\rm I}&\leq {C} \|f'(\overline{w}_\theta(s))\|_{H^2 \II}\|( D_\theta(s)  -I) E_{n-i}^{k} \|_{\dH{-2}}.
 	\end{align*}
 	By Lemmas \ref{lem:f} and \ref{lem:regu_U_n}, we have
 	\begin{align*} 
 		\|f'(\overline{w}_\theta(s))\|_{H^2(\Omega)}\leq  C( 1+ \|\overline{w}_\theta(s)\|_{\dH2}) \leq C( 1+ \|U_{n-i}\|_{\dH2} + \|U_{n-i}^k\|_{\dH2}).
 	\end{align*}
 	Consequently, 
 	\begin{align}
 		{\rm I} 
 		&\leq  C( 1+ \|U_{n-i}\|_{\dH2} + \|U_{n-i}^k\|_{\dH2})\|( D_\theta  (s)-I)E_{n-i}^{k} \|_{\dH{-2}}. \label{eqn:I}
 	\end{align}
 	Note that $\overline{w}_\theta$ satisfies
 	$\overline{w}_\theta ( s ) =e^{-sA}w_\theta+\int_{0}^{s} e^{- ( s-\tau  ) A}f ( \overline{w}_\theta  ( \tau  ) )  \,\d\tau$.
 	Differentiating the identity with respect to $w_\theta$ gives
 	\begin{equation*}
 		D_\theta(s) = e^{-sA}+\int_{0}^{s} e^{-(s-\tau) A}f'( \bar{w}_\theta( \tau)) D_\theta ( \tau) \,\d\tau.
 	\end{equation*}
 	By taking the operator norm $\|\cdot\|_{\mathcal{L}(L^2\II)}$, we derive
 	\begin{equation*}
 		\| D_\theta(s)\|_{\mathcal{L}(L^2\II)} \leq 1+ C \int_{0}^{s}   \| D_\theta ( \tau)\|_{\mathcal{L}(L^2\II)} \,\d\tau.
 	\end{equation*}
 	Then Gronwall's inequality leads to the estimate
 	$\| D_\theta(s)\|_{\mathcal{L}(L^2\II)} \le {C}$ for all $s\in[0,\Delta T]$.
 	Thus we can bound the term ${\rm I}$ by 
 	\begin{align*}
 		{\rm I}&\leq  C \left\| A^{-1}( D_\theta(s) -I) E_{n-i}^{k} \right\|_{L^{2}(\Omega)} 
 		\\
 		&\leq C\| A^{-1}( e^{-sA}-I) E_{n-i}^{k}\|_{L^{2}(\Omega)} + C\int_{0}^{s} \|e^{-( s-\tau) A}{f'}(\overline{w}_\theta(\tau))D_\theta(\tau) E_{n-i}^{k} \|_{\dH{-2}}\, \d\tau \\
 		&\leq  C\| A^{-1}( e^{-sA}-I) E_{n-i}^{k} \|_{L^{2}(\Omega)} + C \int_{0}^{s} \|D_\theta( \tau)E_{n-i}^{k} \|_{L^2(\Omega)}\, \d\tau 
 		\leq C\Delta T \| E_{n-i}^{k}\|_{L^{2}(\Omega)}.
 	\end{align*}
 	Now for the term ${\rm II}$,
 	by Lemmas \ref{lem:f} and \ref{lem:regu_U_n}, we derive 
 	\begin{equation*}
 		\begin{split}
 			{\rm II} &\leq C\| \overline{w}_\theta \left( s \right) -w_\theta\|_{L^{2}(\Omega)} \| E_{n-i}^{k}\|_{L^{2}(\Omega)} 
 			\leq C s \max_{0\le \tau\le s}\|  \overline{w}_\theta' (\tau ) \|_{L^{2}(\Omega)} \| E_{n-i}^{k}\|_{L^{2}(\Omega)} \\
 			&\le C \Delta T \| w_\theta \|_{\dH2} \| E_{n-i}^{k}\|_{L^{2}(\Omega)} 
 			\le C \Delta T (\| U_{n-i}^k \|_{\dH2} +\| U_{n-i} \|_{\dH2})  \| E_{n-i}^{k}\|_{L^{2}(\Omega)}.
 		\end{split}
 	\end{equation*}
 	Combining the bounds on ${\rm I}$ and ${\rm II}$ yields the desired estimate with $q=0$.
 \qed \end{proof}

 Using Lemma \ref{lem:f_-2}, now we analyze the last summation in \eqref{eqn:para_whole}.
 \begin{lemma}\label{lem:Res}
 	For $q=0,2$, let $e_{n,q}^k=\max_{0\leq i\leq n}\|E_i^k\|_{\dH q}$. The following estimates hold
 	\begin{align*}
 		\Big\| \sum_{i=j}^{n-1} \binom{i}{j} \alpha^{i-j} \beta^{j} ~\mathrm{Res}( U_{n-i}^{k-j},U_{n-i}) \Big\|_{L^{2}(\Omega)}
 		&\leq \begin{cases}
 			C \Delta T \ln n e_{n,0}^k ,&j=0,\\ 
 			C \Delta T \gl  \ln n e_{n,0}^{k-1},&j= 1,\\
 			C \Delta T  \gl^{j}e_{n,0}^{k-j},&j\geq 2,\end{cases}\\
 		\Big\| \sum_{i=j}^{n-1} \binom{i}{j} \alpha^{i-j} \beta^{j}  \mathrm{Res}( U_{n-i}^{k-j},U_{n-i} )\Big\|_{\dH2}&\leq  C \sqrt{\Delta T} \gl^{j}  e_{n,2}^{k-j}, \quad j\geq0,
 	\end{align*}
 	where $C$ depends on $f$ and $\max_{\substack{0 \leq j \leq k \\ 0 \leq i \leq n}} ( 1 + \|U_i^j\|_{\dH2} + \| U_i \|_{\dH2})$, but is independent of $n$ and $k$.
 \end{lemma}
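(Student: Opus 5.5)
First I will obtain an integral representation of $\mathrm{Res}(u,v)$. By \eqref{eqn:fp} and \eqref{eqn:cp},
\begin{equation*}
(\mathcal F-\mathcal G)(u)=\beta u+\int_0^{\Delta T}e^{-A(\Delta T-s)}f(\overline u(s))\,\d s-\Delta T P(\Delta TA)f(u).
\end{equation*}
Writing $\Delta TP(\Delta TA)=\int_0^{\Delta T}e^{-A(\Delta T-s)}\,\d s-\Delta T\big(\tfrac{1-e^{-\Delta TA}}{\Delta TA}-P(\Delta TA)\big)$, I get
\begin{equation*}
\mathrm{Res}(u,v)=\int_0^{\Delta T}e^{-A(\Delta T-s)}\big[(f(\overline u(s))-f(u))-(f(\overline v(s))-f(v))\big]\d s+\Delta T\,\delta(\Delta TA)\big(f(u)-f(v)\big),
\end{equation*}
where $\delta(s)=\frac{1-e^{-s}}{s}-P(s)$. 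Call the two pieces $\mathrm{Res}_1$ and $\mathrm{Res}_2$.

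\emph{$L^2$ estimate.} For $\mathrm{Res}_1$, I write $e^{-A(\Delta T-s)}=(\Delta TA)\,e^{-A(\Delta T-s)}\cdot(\Delta T)^{-1}A^{-1}$. Lemma~\ref{lem:f_-2} with $q=0$ bounds the $\dH{-2}$ norm of the bracket by $C\Delta T\|E^{k-j}_{n-i}\|_{L^2}$. Hence
\begin{equation*}
\Big\|\alpha^{i-j}\beta^j\,\mathrm{Res}_1\Big\|_{L^2}\le \frac{C}{\Delta T}\,\Delta T\cdot \Delta T\,\sup_{s}\big|s\,R^{i-j}(e^{-s}-R)^j\big|\,\|E^{k-j}_{n-i}\|_{L^2},
\end{equation*}
using that $e^{-A(\Delta T-s)}$ commutes with everything and is a contraction. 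Here the factor $\Delta TA$ is absorbed as $s^\kappa$ with $\kappa=1$.

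For $\mathrm{Res}_2$, Lipschitz continuity of $f$ gives the factor $\Delta T\,\sup_s|R^{i-j}(e^{-s}-R)^j\delta(s)|$, which is the case $k=1$ in Lemma~\ref{lem:s R^i Q}.

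The case split is then as follows.
\begin{itemize}
\item For $j=0$: Lemma~\ref{lem:s R^i Q} with $j=0$ and $(\kappa,k)=(1,0)$ or $(0,1)$ gives decay $C/(i+1)$. Summing over $i$ yields $C\Delta T\ln n\, e^{k}_{n,0}$.
\item For $j=1$: \eqref{eqn:coro_ineq_2} gives $C\gl/(i+1)^{2}$. Multiplied by $\binom i1=i$, the sum is $C\gl\ln n$.
\item For $j\ge2$: I need a version of Theorem~\ref{thm:inv_gamma_lin} with the extra factor $s$ or $\delta(s)$. I would argue that $|s\,\delta(s)|$ and $|s\,(\text{stuff})|$ are bounded only where $R$ is bounded away from $1$ in absolute value. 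More simply, I would peel off one factor of $\beta$ and one of $\alpha$: bound $\sup_s|sR(e^{-s}-R)|\le C$ (both factors are bounded, since $sR$ is bounded), then apply Theorem~\ref{thm:inv_gamma_lin} with $j-1$ to the remainder. The binomial shift $\binom ij\le \frac{i}{j}\binom{i-1}{j-1}$ causes trouble here, though.
\end{itemize}
My first attempt at $j\ge2$ will be to reprove the appendix argument with $s^\kappa$ inserted, noting that the sup in Theorem~\ref{thm:inv_gamma_lin} is attained at $s$ in a compact set away from $0$, where $s$ and $\delta(s)$ are bounded. Near $0$, the extra factor $s$ only improves the decay. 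This step is the main obstacle.

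\emph{$\dH2$ estimate.} I use Lemma~\ref{lem:f_-2} with $q=2$, which bounds the bracket in $L^2$ by $C\Delta T\|E\|_{\dH2}$. Then $A\,\mathrm{Res}_1$ contributes $\Delta T^{-1}(\Delta TA)$ times that. I split $(\Delta TA)=(\Delta TA)^{1/2}(\Delta TA)^{1/2}$, placing one half on the semigroup integral via $\int_0^{\Delta T}\|A^{1/2}e^{-A(\Delta T-s)}\|\,\d s\le C\sqrt{\Delta T}$, and keeping $\kappa=1/2$ on the rational factor.

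For $\mathrm{Res}_2$, I use \eqref{eqn:basic-est3} in the form $\|f(u)-f(v)\|_{\dot H^1}\le C\|E\|_{\dH2}$ and write $A=(\Delta T)^{-1/2}(\Delta TA)^{1/2}A^{1/2}$, again with $\kappa=1/2$, $k=1$.

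The resulting decays are $(i+1)^{-3/2}$ for $j=0$ and $(i+1)^{-5/2}$ for $j=1$, both summable. This gives the $\sqrt{\Delta T}\,\gl^j e^{k-j}_{n,2}$ bounds without logarithms, with $j\ge2$ handled as in the $L^2$ case.
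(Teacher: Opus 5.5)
\textbf{Main gap: the $i=j$ summand.} Your decomposition of $\mathrm{Res}$ matches the paper's, and so does the mechanism: Lemma~\ref{lem:f_-2} gives a factor $\Delta T$ in $\dot H^{-2}$, you pay for it by moving $\Delta TA$ onto the rational symbol, and then you use Lemma~\ref{lem:s R^i Q}, \eqref{eqn:coro_ineq_2} or Theorem~\ref{thm:inv_gamma_lin} according to $j$. The gap is the summand $i=j$, in particular $i=j=0$. That term is $\mathrm{Res}(U_n^k,U_n)$ itself, with no power of $\alpha$ or $\beta$ in front. For it, your bound $\|\alpha^{i-j}\beta^j(\Delta TA)e^{-A(\Delta T-\sigma)}\|\le\sup_s|s\,R^{i-j}(e^{-s}-R)^j|$ becomes $\sup_s s=\infty$; equivalently, $\int_0^{\Delta T}\|Ae^{-A(\Delta T-\sigma)}\|\,\mathrm{d}\sigma$ diverges. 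The same happens in your $\dot H^2$ argument, where $\sup_s s^{1/2}=\infty$. Lemma~\ref{lem:s R^i Q} only holds for $i\ge1$, so your $j=0$ bullets silently drop this term.

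The paper splits off $i=j$ (its term ${\rm I}_3$) and bounds it directly:
\begin{itemize}
\item In $L^2$: $\|\beta^j\|\le\gl^j$, Lipschitz continuity of $f$ and $L^2$-stability of the exact flow give $\|\mathrm{Res}_1\|_{L^2}\le C\Delta T\|E^{k-j}_{n-j}\|_{L^2}$.
\item In $\dot H^2$: use \eqref{eqn:basic-est3}, together with $\dot H^2$-stability of the flow, to place the bracket in $\dot H^1$. Then $\int_0^{\Delta T}\|A^{1/2}e^{-A(\Delta T-\sigma)}\|\,\mathrm{d}\sigma\le C\sqrt{\Delta T}$ gives $C\sqrt{\Delta T}\gl^j\|E^{k-j}_{n-j}\|_{\dH2}$.
\end{itemize}
For $i=j\ge1$ your route does work, since $s|e^{-s}-R(s)|$ is bounded. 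Only $i=j=0$ has to be treated separately.

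\textbf{The open step for $j\ge2$.} You leave the extension of Theorem~\ref{thm:inv_gamma_lin} open.
\begin{itemize}
\item The factor $\delta(s)$ needs no argument. It is bounded, as is $s^{1/2}\delta(s)$, and that is all the paper uses when it cites the theorem.
\item For the factor $s^\kappa$, the paper also just absorbs it into the citation. The difficulty is at $s\to\infty$, not near $0$ as your sketch suggests. Split $(0,\infty)=(0,S]\cup(S,\infty)$ with $S\ge1$. On $(0,S]$, use $s^\kappa\le S^\kappa$ and apply the theorem. On $(S,\infty)$, use $|sR(s)|\le C$, $s|e^{-s}-R(s)|\le C$ and $|R|,|e^{-s}-R|\le\epsilon$. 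This gives a tail of at most $C\sum_{i\ge j}\binom ij\epsilon^{i-1}=C\epsilon^{j-1}(1-\epsilon)^{-j-1}\le C\gl^j$ once $\epsilon/(1-\epsilon)\le\gl$.
\end{itemize}
This split avoids the binomial-shift problem you ran into.

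\textbf{A small slip.} In $\dot H^2$ with $\kappa=\tfrac12$, the $\mathrm{Res}_1$ part for $j=0$ decays only like $(i+1)^{-1/2}$, not $(i+1)^{-3/2}$. Your conclusion still holds, because $\Delta T\sum_{i<n}(i+1)^{-1/2}\le2\sqrt{T\Delta T}$. The paper instead takes $\kappa=1$ and uses $\Delta T\ln n\le C\sqrt{\Delta T}$.
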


\begin{proof}
	First, we represent the  operator $\text{Res}(\cdot,\cdot)$ defined in \eqref{eqn:def_Res} by
	\begin{equation*}
		\begin{aligned}
			\text{Res}( u, v ) & = \left( \mathcal{F} - \mathcal{G} \right)( u ) - \left( \mathcal{F} - \mathcal{G} \right)( v ) - \beta ( u - v ) \\
			& = \int_{0}^{\Delta T} \left( e^{-(\Delta T - s)A} - P(\Delta T A) \right)  \mathrm{d} s~ ( f(u) - f(v)) \\
			&\quad  + \int_{0}^{\Delta T} e^{-(\Delta T - s)A} \left( ( f(\overline{u}(s)) - f(u) ) - ( f(\overline{v}(s)) - f(v)) \right)  \mathrm{d} s.
		\end{aligned}
	\end{equation*}
	Now we prove the $L^2(\Omega)$ estimate. By taking the $L^2(\Omega)$ norm and splitting out the case $i=j$, we arrive at
	\begin{align*}
		& \Big\| \sum_{i=j}^{n-1} \binom{i}{j} \alpha^{i-j} \beta^{j}~\text{Res}(U_{n-i}^{k-j}, U_{n-i}) \Big\|_{L^{2}(\Omega)} \leq {\rm I}_1+{\rm I}_2 +{\rm I}_3,\quad \mbox{with }\\
		{\rm I}_1=& \sum_{i=j}^{n-1} \Big\| \beta^{j} \binom{i}{j} \alpha^{i-j} \int_{0}^{\Delta T}( e^{-(\Delta T - s)A} - P(\Delta TA))  \mathrm{d} s \Big\|_{\mathcal{L}( L^2 (\Omega))}  \| f ( U_{n-i}^{k-j}  ) -  f( U_{n-i} )  \|_{L^{2}(\Omega)}, \\
		{\rm I}_2=& \Big\| \beta^{j} \sum_{i=j+1}^{n-1} \binom{i}{j} \alpha^{i-j} \int_{0}^{\Delta T} e^{-(\Delta T - s)A} \Big( \big( f ( \overline{U}_{n-i}^{k-j}  ( s  )  ) - f ( U_{n-i}^{k-j}  )  \big)  
		- \big( f (\overline{U}_{n-i} ( s  )  ) - f ( U_{n-i}  )  \big) \Big)  \mathrm{d} s \Big\|_{L^{2}(\Omega)},\\
		{\rm I}_3=& \|\beta^j\|_{\mathcal{L}(L^2 \II)}  \Big(\int_{0}^{\Delta T}\!\! \| f( \overline{U}_{n-j}^{k-j}(s)) -f( \overline{U}_{n-j}(s)) \|_{L^{2}( \Omega)} +\| f(U_{n-j}^{k-j}) -f(U_{n-j}) \|_{L^{2}( \Omega)} \d s\Big),
	\end{align*}
	where $\overline{U}_{n-i}^{k-j}(s)$ and $\overline{U}_{n-i}(s)$ denote the exact propagators at time $s$ with the initial values $U_{n-i}^{k-j}$ and $U_{n-i}$, respectively. The term ${\rm I}_1$ can be bounded as 
	\begin{align*}
		{\rm I}_1  &\leq Ce_{n,0}^{k-j}{\rm I}_1', \quad \mbox{with }{\rm I}_1'= \sum_{i=j}^{n-1} \Big\| \beta^j \int_{0}^{\Delta T}\binom{i}{j}\alpha^{i-j}  \left( e^{ -(\Delta T-s)A} - P\left( \Delta T A \right) \right) \mathrm{d} s \Big\|_{\mathcal{L}(L^2 (\Omega))}.
	\end{align*}
	Under Assumption \ref{assum:R}, we have  
	\begin{align*}
		{\rm I}_1'
		&\leq \Delta T \sum_{i=j}^{n-1} \Big\| \binom{i}{j} \alpha^{i-j} \beta^{j}  \big(  ( \Delta TA  )^{-1}  ( I-e^{-\Delta T A} ) -P ( \Delta TA  )  \big) \Big\|_{\mathcal{L} (L^2(\Omega))} \\
		&\leq \Delta T \sum_{i=j}^{n-1} \binom{i}{j} \sup_{s\in \left( 0,\infty \right)} \left| \left( R\left( s \right) \right)^{i-j} \left( e^{-s}-R\left( s \right) \right)^{j} \left( \frac{1-e^{-s}}{s} -P\left( s \right) \right) \right| \\
		&\leq \begin{cases}
			C \Delta T   \ln n, & j = 0 \quad \text{(\eqref{eqn:coro_ineq} with $\kappa=0$, $j=0$ and $k=1$)}, \\
			C \Delta T  \ln n \gl, & j = 1 \quad \text{(\eqref{eqn:coro_ineq_2} with $\kappa=0$ and $k=1$)}, \\
			C \Delta T  \gl^{j}, & j \ge 2 \quad \text{(Theorem \ref{thm:inv_gamma_lin})}.
		\end{cases}
	\end{align*}
	The term ${\rm I}_2$ can be bounded by
	\begin{align*}
		{\rm I}_2
		&\leq  \max_{0\leq i \leq n} \sup_{s \in \left( 0, \Delta T \right)} \big\| \big ( f( \overline{U}_{n-i}^{k-j} ( s )) - f( U_{n-i}^{k-j} ) \big) -  \big ( f ( \overline{U}_{n-i}  ( s  )  ) - f ( U_{n-i} )  \big ) \big \|_{\dH{-2}} \\
		&\qquad \times \sum_{i=j+1}^{n-1} \frac{1}{\Delta T} \int_{0}^{\Delta T} \binom{i}{j} \left\| \beta^j \alpha^{i-j} e^{ -(\Delta T-s)A} A \Delta T \right\|_{\mathcal{L}(L^2 (\Omega))} \mathrm{d} s=: {\rm I}_2 '\cdot {\rm I}_2''.
	\end{align*}
	By  Lemma \ref{lem:f_-2}, there exists $C$ depending on $f$ and $\max_{\substack{0 \leq j \leq k \\ 0 \leq i \leq n-1}} ( 1 + \|U_i^j\|_{\dH2} + \| U_i \|_{\dH2})$ such that
	${\rm I}_2'
	\leq C\Delta T e_{n-1,0}^{k-j}$.
	Further, under Assumption \ref{assum:R}, we obtain
	\begin{align*}
		{\rm I}_2''
		&= \sum_{i=j+1}^{n-1} \int_{0}^{1} \binom{i}{j} \| \beta^{j} \alpha^{i-j} e^{-\left( 1-t \right) \Delta T A} \Delta T A\|_{\mathcal{L} (L^2 \II )} \, \d t \\
		&\leq \sum_{i=j+1}^{n-1} \int_{0}^{1} \binom{i}{j} \sup_{s\in \left( 0,\infty \right)} |\left( e^{-s}-R\left( s \right) \right)^{j} {\left( R\left( s \right) \right)^{i-j} e^{-\left( 1-t \right) s}s| \, \d t} \\
		&\leq \begin{cases}
			C \ln n,  & j = 0 \quad \text{(\eqref{eqn:coro_ineq} with $\kappa=1$, $j=0$ and $k=0$)}, \\ 
			C \ln n  \gl , & j = 1 \quad \text{(\eqref{eqn:coro_ineq_2}  with $\kappa=1$ and $k=0$)}, \\
			C   \gl^{j}, & j \ge 2 \quad \text{(Theorem \ref{thm:inv_gamma_lin})}.
		\end{cases}
	\end{align*}
	The third term ${\rm I}_3$ can be bounded directly by
	\begin{align*}
		{\rm I}_3
		&\leq \gl^j C \Delta T \Big(\sup_{s\in ( 0,\Delta T)} \| \overline{U}_{n-j}^{k-j}( s) -\overline{U}_{n-j}(s) \|_{L^2(\Omega )} +\| E_{n-j}^{k-j}\|_{L^2(\Omega)}\Big)
		\leq C \Delta T \gl^j\| E_{n-j}^{k-j}\|_{L^2 \II}.
	\end{align*}
	Combining the preceding estimates yields the desired $L^2(\Omega)$ bound.
	Now we derive the estimate in the $\dot{H}^2 \II$ norm. By repeating the preceding argument with Lemma \ref{lem:f}, we obtain
	\begin{align*} 
		\Big\| \sum_{i=j}^{n-1}& \binom{i}{j} \alpha^{i-j} \beta^{j} A ~\text{Res}( U_{n-i}^{k-j}, U_{n-i}) \Big\|_{L^{2}(\Omega)} 
		\leq C e_{n,2}^{k-j}\cdot \mathrm{II}_1  + \mathrm{II}_2 \cdot \mathrm{II}_3 + C \sqrt{\Delta T} \gl^j \|E_{n-j}^{k-j}\|_{\dH2}, \\
		\mbox{with } &
		{\rm II}_1 = e_{n,2}^{k-j}\cdot \sum_{i=j}^{n-1} \Big\| \beta^{j} \binom{i}{j} \alpha^{i-j} \int_{0}^{\Delta T} A^{\frac{1}{2}}( e^{-(\Delta T-s)A}-P(\Delta TA) ) \, \d s \Big\|_{\mathcal{L} (L^2 \II)},\\
		&{\rm II}_2  = \max_{0\leq i \leq n} \sup_{s \in ( 0, \Delta T )}\|( f( \overline{U}_{n-i}^{k-j} ( s)) - f( U_{n-i}^{k-j} ) ) - ( f( \overline{U}_{n-i}( s)) - f( U_{n-i} ))\|_{L^2(\Omega)} , \\
		&{\rm II}_3 =\sum_{i=j+1}^{n-1} \frac{1}{\Delta T} \int_{0}^{\Delta T} \binom{i}{j} \| \beta^j \alpha^{i-j} e^{-( \Delta T-s )A} A \Delta T \|_{\mathcal{L}(L^2(\Omega))}\,\d s.
	\end{align*}
	For the term ${\rm II}_1$, we have 
	\begin{align*} 
		{\rm II}_1  & \le C \sqrt{\Delta T}\sum_{i=j}^{n-1}  \Big\| \binom{i}{j} \alpha^{i-j} \beta^{j} \left( \Delta TA \right)^{\frac{1}{2}}  ((  \Delta TA)^{-1}( I - e^{-\Delta T A}  ) - P ( \Delta TA  ) \big) \Big\|_{\mathcal{L}(L^2(\Omega))} \\
		&\leq C \sqrt{\Delta T} \sum_{i=j}^{n-1} \binom{i}{j}\sup_{s\in (0,\infty)}  \Big| ( R( s) )^{i-j} ( e^{-s} - R( s) )^{j} s^{\frac{1}{2}} \Big( \frac{1 - e^{-s}}{s} - P( s ) \Big)  \Big|\\
		&\leq  \begin{cases}
			C \sqrt{\Delta T},  & j = 0 \quad \text{(\eqref{eqn:coro_ineq} with $\kappa=\frac12$, $j=0$ and $k=1$)}, \\ 
			C \sqrt{\Delta T} \gl , & j = 1 \quad \text{(\eqref{eqn:coro_ineq_2} with $\kappa=\frac12$ and $k=1$)}, \\
			C  \sqrt{\Delta T} \gl^{j}, & j \ge 2 \quad \text{(Theorem \ref{thm:inv_gamma_lin})}.
		\end{cases}
	\end{align*}
	Meanwhile, Lemma \ref{lem:f_-2} implies 
	${\rm II}_2   \leq C \Delta T e_{n,2}^{k-j},$ with $C$ depending on $f$ and $\max_{\substack{0 \leq j \leq k \\ 0 \leq i \leq n-1}} ( 1 + \|U_i^j\|_{\dH2} + \| U_i \|_{\dH2})$. Moreover,
	repeating the argument for the estimate in the $L^2(\Omega)$ norm yields 
	\begin{align*}
		{\rm II}_3   &\leq  \begin{cases}
			C \ln n, & j = 0 \quad \text{(\eqref{eqn:coro_ineq} with $\kappa =1, j=0$ and $k=0$)}, \\
			C \gl \ln n, & j = 1 \quad \text{(\eqref{eqn:coro_ineq_2} with $\kappa=1$ and $k=0$)}, \\
			C \gl^{j}, & j \geq 2 \quad \text{(Theorem~\ref{thm:inv_gamma_lin})}.
		\end{cases}
	\end{align*}
	Combining the preceding estimates yields the estimate  for $j \geq 1$. For $j=0$, since $\ln n ~\Delta T \leq C \Delta T^\frac{1}{2}$, the result also follows.
\qed \end{proof}

\begin{remark}
	With the exponential Euler method as the CP, Brehier and Wang \cite{BrehierWang:2020} derived a result analogous to the case $j=0$ in Lemma \ref{lem:Res} under the $L^2 \II$ norm. Since the linear component of problem \eqref{eqn:pde} is exactly propagated within exponential integrators, the analysis is much simpler.
\end{remark}

Combining the relation \eqref{eqn:para_whole} with Lemmas \ref{lem:lin}, \ref{lem:f-f} and \ref{lem:Res} gives the main result in $\dot{H}^2 \II$.
\begin{theorem}\label{thm:main_thm_H2}
	Suppose that $U_n^0\in \dot{H}^2(\Omega)$. Then the error $E_n^k=U_n^k -U_n$ satisfies
	\begin{equation}\label{eqn:conv_H2}
		\max_{0 \leq n\leq N_c-1}\|E_{n+1}^{k+1}\|_{\dH2}\leq C\big( \gl +C_1 \sqrt{\Delta T}  \big)^{k+1},
	\end{equation}
	where $C$ and $C_1$ are independent of $n$, $k$, $N_c$ and $\Delta T$.
\end{theorem}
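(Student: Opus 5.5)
We take the $\dot H^2(\Omega)$ norm of the identity \eqref{eqn:para_whole} in Lemma \ref{lem:expression_para} and bound the three summations with the $\dot H^2$ parts of Lemmas \ref{lem:lin}, \ref{lem:f-f} and \ref{lem:Res}. This gives a recursive inequality in the pair $(k,n)$, which we close by induction on $k$.

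The constants in Lemmas \ref{lem:f_-2} and \ref{lem:Res} depend on $\max_{i,j}(1+\|U_i^j\|_{\dH2}+\|U_i\|_{\dH2})$. We therefore carry a uniform $\dot H^2$ bound on the iterates as part of the induction hypothesis. Lemma \ref{lem:regu_U_n} bounds $\|U_i\|_{\dH2}$. For the iterates, $\|U_i^j\|_{\dH2}\le \|U_i\|_{\dH2}+e^j_{N_c,2}$, and the error bound being proved (which does not grow in $k$ once $\gl+C_1\sqrt{\Delta T}<1$) keeps this bounded. For $k=0$ the initial errors $E^0_n$ are bounded in $\dot H^2$ because $U_n^0\in\dot H^2$ and the coarse propagator is stable there. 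We fix these constants once.

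With $e^k:=e^k_{N_c,2}$, the lemmas give, for each $n$,
\[
\|E_{n+1}^{k+1}\|_{\dH2}\le C\Delta T^{1/2}\sum_{i=0}^{n-1}\frac{\|E_{n-i}^{k+1}\|_{\dH2}}{\sqrt{i+1}} + \gl^{k+1}\Big(\sum_{i}\|E_i^0\|^2_{\dH2}\Big)^{1/2} + C\sqrt{\Delta T}\sum_{j=0}^{k}\gl^{j}\big(e^{k-j}+e^{k+1-j}\big)\Big|_{j\ge1 \text{ for } e^{k+1-j}}.
\]
The sum over $i$ comes from ${\rm I}_0$. The terms with $j\ge1$ in Lemma \ref{lem:f-f} and all terms in Lemma \ref{lem:Res} carry a factor $\sqrt{\Delta T}\,\gl^j$ times earlier iterates. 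The ${\rm I}_1$ term of Lemma \ref{lem:f-f} lacks a factor $\gl$, but it still multiplies $\sqrt{\Delta T}\,e^k$.

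The first term, from ${\rm I}_0$, contains the current iterate. We absorb it with a discrete Gronwall inequality with weakly singular kernel $(i+1)^{-1/2}$. Since $\Delta T^{1/2}\sum_{i\le N_c}(i+1)^{-1/2}\le C\sqrt{T}$, this gives
\[
e^{k+1}\le C\gl^{k+1}+C\sqrt{\Delta T}\sum_{j=0}^{k}\gl^{j}e^{k-j},
\]
with a constant depending only on $T$. The linear term should be bounded by $\gl^{k+1}$ times the $\dot H^2$ size of the initial errors; since $\sum_i\|E_i^0\|_{\dH2}^2\le N_c\,(e^0)^2$ is not uniform in $N_c$, the spectral argument of Lemma \ref{lem:lin} has to be used with the max over $i$ taken pointwise in frequency, not summed.

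Next we show $e^{k}\le M(\gl+C_1\sqrt{\Delta T})^{k}$ by strong induction on $k$. Write $\rho=\gl+C_1\sqrt{\Delta T}$ and insert the hypothesis $e^{k-j}\le M\rho^{k-j}$. Then
\[
\sqrt{\Delta T}\sum_{j}\gl^j\rho^{k-j}\le \sqrt{\Delta T}\,\rho^{k}\sum_j(\gl/\rho)^j\le \sqrt{\Delta T}\,\rho^k\frac{\rho}{\rho-\gl}=\frac{\rho^{k+1}}{C_1}.
\]
Choosing $C_1$ larger than the Gronwall constant $C$ (and taking $M\ge 2C$) gives $e^{k+1}\le C\gl^{k+1}+(C/C_1)M\rho^{k+1}\le M\rho^{k+1}$. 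This choice of $C_1$ is exactly what makes the geometric factor $\sum_j(\gl/\rho)^j$ harmless.

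I expect the main obstacles to be the Gronwall absorption of ${\rm I}_0$ uniformly in $n$, and keeping the linear term uniform in $N_c$, that is, bounding it by $\max_i\|E_i^0\|$ rather than by a sum. For the latter I would first re-run the proof of Lemma \ref{lem:lin} frequency by frequency with $\sup_i|(E_i^0,\varphi_\ell)|$, using that $E_i^0$ is itself generated by the coarse recursion, and otherwise accept a mild dependence absorbed into $C$.
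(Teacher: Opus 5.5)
Your proposal is correct and follows the paper's proof essentially step for step: you take the $\dot H^2$ norm of \eqref{eqn:para_whole}, bound it with Lemmas \ref{lem:lin}, \ref{lem:f-f} and \ref{lem:Res} under an induction hypothesis that keeps the iterates uniformly bounded in $\dot H^2$, absorb ${\rm I}_0$ with the weakly singular discrete Gronwall inequality (Lemma \ref{lem:x_k_1}), and close with a geometric-series induction in $k$ that is equivalent to Lemma \ref{lem:x_k}. Your concern about the linear term is legitimate: the paper keeps $\gl^{k+1}\sum_{i=0}^{N_c}\|E_i^0\|_{\dH2}$ and absorbs it into $C$, which is exactly your fallback, so your frequency-wise $\sup_i$ refinement would strengthen the claimed $N_c$-independence rather than being needed to match the paper.
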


\begin{proof}
	We proceed by mathematical induction on the iteration index $k$. Suppose that $$\max_{0\leq n \leq N_c-1}\|E_{n+1}^{k_0+1}\|_{\dH2}\leq C( \gl +C_1\sqrt{ \Delta  T})^{k_0+1}$$ holds for all $k_0 \leq k-1$. This and Lemma \ref{lem:regu_U_n} indicate that with small $\Delta T$, there holds
	\begin{equation*}
		\max_{0\leq n \leq N_c-1}\|U_{n+1}^{k_{0}+1}\|_{\dot{H}^{2}(\Omega)} 
		\leq \max_{0 \leq n \leq N_c-1} \| U_{n+1}\|_{\dot{H}^{2}(\Omega)} 
		+\max_{0\leq n \leq N_c-1}\| E_{n+1}^{k_{0}+1}\|_{\dH2}\leq C,
	\end{equation*}
	for all $k_0 \leq k-1$, and the bound is uniform in $k_0$, $k$ and $n$. 
	Then for $k+1$, the identity
	\eqref{eqn:para_whole} gives 
	\begin{align*}
		\max_{0\leq n \leq N_c-1}\|E_{n+1}^{k+1}\|_{\dH2} {\le} & \sum_{j=0}^{k} \sum_{i=j}^{N_c-2}\left\| \binom{i}{j} \alpha^{i-j} \beta^{j} A\Delta TP( \Delta TA) ( f( U_{N_c-1-i}^{k+1-j}) - f( U_{N_c-1-i}) )\right\|_{L^2 \II} \\
		& +\max_{0\leq n\leq N_c-1} \left\| \sum_{i=k}^{n} \binom{i}{k} \alpha^{i-k} \beta^{k+1} AE_{n-i}^{0} \right\|_{L^2(\Omega)} \\
		&+\sum_{j=0}^{k} \sum_{i=j}^{N_c-2} \left\|\binom{i}{j} \alpha^{i-j} \beta^{j}A~ \mathrm{Res}( U_{N_c-1-i}^{k-j}, U_{N_c-1-i} )\right\|_{L^2(\Omega)}.
	\end{align*}
	By combining Lemmas  \ref{lem:lin}, \ref{lem:f-f} and \ref{lem:Res}, we  obtain 
	\begin{align*}
		&\max_{0\leq n \leq N_c-1}\|E_{n+1}^{k+1}\|_{\dH2}\leq \Big( C \Delta T^{\frac{1}{2}} \sum_{i=0}^{N_c-2} \frac{1}{\sqrt{i+1}} \|E_{N_c-1-i}^{k+1}\|_{\dH2}+ C\Delta T^{\frac12} \max_{0\leq i \leq N_c} \|E_{i}^k\|_{\dH2} \\ &\quad+\sum_{j=2}^{k} C \Delta T^{\frac{1}{2}}\gl^{j} \max_{0\leq i \leq N_c} \|E_{i}^{k+1-j}\|_{\dH2} \Big)+\gl^{k+1} \sum_{i=0}^{N_c} \|E_{i}^{0}\|_{\dH2}+\sum_{j=0}^{k} C \Delta T^{\frac{1}{2}}\gl^{j}  \max_{0\leq i \leq N_c} \|E_{i}^{k-j}\|_{\dH2}\\
		&\leq C\Delta T\sum_{i=0}^{N_c-2}T_{i+1}^{-\frac12} \max_{0\leq n \leq N_c-1-i} \|E_{n}^{k+1}\|_{\dH2} +\gl^{k+1}\sum_{i=0}^{N_c}\|E_i^0\|_{\dH2} +C\Delta T^{\frac{1}{2}}\sum_{j=0}^k \gl^j \max_{0\leq i\leq N_c} \|E_i^{k-j}\|_{\dH2}.
	\end{align*}
	By applying Lemma \ref{lem:x_k_1} with $a=\gl^{k+1}\sum_{i=0}^{N_c}\|E_i^0\|_{\dH2} + C\sqrt{\Delta T}\sum_{j=0}^k \gl^j \max_{0\leq i\leq N_c} \|E_i^{k-j}\|_{\dH2}$ and $x_{n}=\max_{0\leq i\leq n}\|E_{i}^{k+1}\|_{\dH2}$, we obtain
	\begin{equation*}
		\max_{0\leq n\leq N_c -1}\|E_{n+1}^{k+1}\|_{\dH2}\leq C\sqrt{T}{e^{ C{T}}}\Big( \gl^{k+1} \sum_{i=0}^{N_c} \|E_{i}^{0}\|_{\dH2}+C \sqrt{\Delta T}\sum_{j=0}^{k} \gl^{j}  \max_{0\leq i \leq N_c} \|E_{i}^{k-j}\|_{\dH2} \Big).
	\end{equation*}
	This recursion and Lemma \ref{lem:x_k} yield the desired bound, with $C_1 = C\sqrt{T}e^{CT}$.
\qed \end{proof}

Theorem \ref{thm:main_thm_H2} directly implies the uniform boundedness of $\|U_{n}^{k}\|_{\dot{H}^2\II }$ for all $n$ and $k$.
\begin{corollary}\label{coro:regu_H2}
	Suppose that $ U_n^0 \in \dH2$ for all $n$. Then the parareal solution $U_n^k$ by Algorithm \ref{alg:para} satisfies
	$\|U_{n}^{k}\|_{\dot{H}^2\II } \leq C$, with the constant $C$ independent of $n$ and $k$.
\end{corollary}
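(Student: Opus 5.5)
The plan is to obtain the corollary from the triangle inequality $\|U_n^k\|_{\dH2}\le \|U_n\|_{\dH2}+\|E_n^k\|_{\dH2}$ and to bound each term uniformly. For the first term I will use Lemma \ref{lem:regu_U_n}: the exact coarse-grid values satisfy $U_n=u(T_n)$, so $\|U_n\|_{\dH2}\le C_T\|u_0\|_{\dH2}$ uniformly in $n$. For the second term, when $k\ge 1$ I invoke Theorem \ref{thm:main_thm_H2}, which gives
\begin{equation*}
\max_{0\le n\le N_c-1}\|E_{n+1}^{k}\|_{\dH2}\le C\big(\gl+C_1\sqrt{\Delta T}\big)^{k}.
\end{equation*}
The index $n=0$ is trivial, because $E_0^k=0$.

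The key step is to check that the base $\gl+C_1\sqrt{\Delta T}$ is at most $1$. Assumption \ref{assum:R}(i) together with the definition \eqref{eqn:gamma} gives $\gl<1$; for instance, for all three coarse propagators in Remark \ref{rem:gamma_lin} one has $\gl\le 0.3$. Hence, under the standing smallness assumption on $\Delta T$ (the same one already used in the proof of Theorem \ref{thm:main_thm_H2}), we can take $C_1\sqrt{\Delta T}\le 1-\gl$. Then $(\gl+C_1\sqrt{\Delta T})^k\le 1$, and the error is bounded by $C$ independently of $k$ and $n$.

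The case $k=0$ is handled directly. The initialization $U_{n+1}^0=\mathcal G(U_n^0)$ with $U_0^0=u_0\in\dH2$ stays in $\dH2$: the operators $R(\Delta TA)$ and $\Delta T P(\Delta TA)$ are bounded on $\dH2$, and $f(v)\in \dH2$ for $v\in\dH2$ by Lemma \ref{lem:f} and the boundary/compatibility structure. In any case, $U_n^0\in\dH2$ is the hypothesis of the corollary. A uniform bound on $\|E_n^0\|_{\dH2}$ is already implicit in Theorem \ref{thm:main_thm_H2}, since its constant $C$ absorbs $\sum_i\|E_i^0\|_{\dH2}$. I would simply take the constant in the corollary to be this quantity plus $C_T\|u_0\|_{\dH2}$, and combine it with the triangle inequality.

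The main subtlety is not in the corollary itself but in making sure the constants in Theorem \ref{thm:main_thm_H2} are genuinely independent of $k$, since that theorem's induction itself used uniform $\dH2$ bounds on the iterates. That induction already established $\max_n\|U_{n+1}^{k_0+1}\|_{\dH2}\le C$ for all $k_0$, with a bound uniform in $k_0$. So I would essentially extract this statement from the induction, noting that the constant there depends only on $T$, $f$, $u_0$ and the initial guess, and that the smallness of $\Delta T$ guarantees the geometric factor does not exceed one.
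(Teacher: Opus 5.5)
Your proposal is correct and follows the paper's argument: the corollary comes from Theorem~\ref{thm:main_thm_H2} via $\|U_n^k\|_{\dH2}\le\|U_n\|_{\dH2}+\|E_n^k\|_{\dH2}$ and Lemma~\ref{lem:regu_U_n}, with $\Delta T$ small enough that $\gl+C_1\sqrt{\Delta T}\le 1$ (the same uniform bound already used inside that theorem's induction), and with $k=0$ covered by the hypothesis on the finitely many $U_n^0$. One correction: $\gl<1$ does not follow from Assumption~\ref{assum:R}(i) (e.g.\ $R(s)=\frac{1+10s}{1+11s+0.1s^2}$ satisfies (i) but gives $|\gamma(3)|\approx 7.5$), so you must treat $\gl<1$ as a standing assumption, as the paper implicitly does (cf.\ the hypothesis $\gamma<1$ in Lemma~\ref{lem:x_k}); likewise $f(v)\in\dH2$ fails unless $f(0)=0$, although $\mathcal{G}$ still maps $\dH2$ into itself because $|sP(s)|\le C$.
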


Last, we bound the parareal error in $L^2 \II$, which exhibit a better rate in $\Delta t$ than that in Theorem \ref{thm:main_thm_H2}. 

\begin{theorem}\label{thm:main_thm_L2}
	Suppose that $U_n^0\in \dot{H}^2(\Omega)$ for all $n$. Then the parareal error $E_n^k$ satisfies
	\begin{equation*} \max_{0\leq n \leq N_c-1}\|E_{n+1}^{k+1}\|_{L^2(\Omega)}\leq C (\gl +   C_2  \Delta T \ln N_c )^{k+1},
	\end{equation*}
	where $C$ and $C_2$ are independent of $n$, $k$, $N_c$ and $\Delta T$.
\end{theorem}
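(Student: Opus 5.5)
The argument mirrors the proof of Theorem \ref{thm:main_thm_H2}, now in the $L^2(\Omega)$ norm. The only preliminary change is that Corollary \ref{coro:regu_H2} supplies, a priori, the uniform $\dot H^2(\Omega)$ bound on all iterates $U_n^k$. Consequently the constants in Lemmas \ref{lem:f_-2} and \ref{lem:Res} are uniform in $n$ and $k$, and no induction on the boundedness of the iterates is required.

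Write $e_{n,0}^k=\max_{0\le i\le n}\|E_i^k\|_{L^2(\Omega)}$. Starting from the identity \eqref{eqn:para_whole}, take $L^2(\Omega)$ norms and bound the three summations as follows.
\begin{itemize}
\item The linear term is handled by Lemma \ref{lem:lin} with $q=0$, which gives $\gl^{k+1}\sum_{i=0}^{N_c}\|E_i^0\|_{L^2(\Omega)}$.
\item The first summation is split according to Lemma \ref{lem:f-f}:
 \begin{itemize}
 \item for $j=0$ it gives $C\Delta T\sum_{i=1}^n\|E_i^{k+1}\|_{L^2(\Omega)}$;
 \item for $j=1$ it gives $C\Delta T\ln N_c\, e_{N_c,0}^k$;
 \item for $j\ge2$ it gives $C\Delta T\gl^j e_{N_c,0}^{k+1-j}$.
 \end{itemize}
\item The residual summation is bounded by Lemma \ref{lem:Res}, which gives $C\Delta T\ln N_c\,\gl^j e_{N_c,0}^{k-j}$ for $j\le1$ and $C\Delta T\gl^j e_{N_c,0}^{k-j}$ for $j\ge2$.
\end{itemize}
Collecting terms, with $x_n=e^{k+1}_{n,0}$ we obtain
\begin{equation*}
x_{n+1}\le C\Delta T\sum_{i=1}^{n}x_i+\gl^{k+1}\sum_{i=0}^{N_c}\|E_i^0\|_{L^2(\Omega)}+C\Delta T\ln N_c\sum_{j=0}^{k}\gl^{j}e_{N_c,0}^{k-j}.
\end{equation*}
The terms coming from $f(U^{k+1-j})$ with $j\ge1$ have been re-indexed into the same form, using $\gl\le1$ to absorb a single factor.

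The first term on the right is a discrete Gronwall term. Applying Lemma \ref{lem:x_k_1} (the discrete Gronwall lemma used for Theorem \ref{thm:main_thm_H2}, here with kernel $\Delta T$ in place of $T_{i+1}^{-1/2}\Delta T$) removes it at the cost of a factor $e^{CT}$. This yields the two-level recursion
\begin{equation*}
e^{k+1}_{N_c,0}\le C_T\Big(\gl^{k+1}a_0+C\Delta T\ln N_c\sum_{j=0}^k\gl^j e^{k-j}_{N_c,0}\Big),\qquad a_0=\sum_{i=0}^{N_c}\|E_i^0\|_{L^2(\Omega)}.
\end{equation*}
Lemma \ref{lem:x_k} then converts a recursion of the form $y_{k+1}\le \gl^{k+1}a+\varepsilon\sum_{j}\gl^j y_{k-j}$ into $y_{k+1}\le C(\gl+C\varepsilon)^{k+1}$, with $\varepsilon=C\Delta T\ln N_c$. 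To see why, set $y_k=(\gl+\delta)^kz_k$: the convolution sum is then bounded by $\varepsilon\,\max_k z_k/\delta$ times $(\gl+\delta)^{k}$, and choosing $\delta\sim\varepsilon$ closes the induction. Finally, $a_0$ is bounded by $C$ because $\|E_i^0\|_{L^2(\Omega)}\le C\Delta T$ for the first-order coarse initialization, so the factor $N_c\Delta T$ is controlled.

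The main obstacle is bookkeeping rather than any single estimate: the $\ln N_c$ factors appear only for $j=0,1$, and we must check that they enter additively as $\Delta T\ln N_c$, multiplying a convolution $\sum_j\gl^j e^{k-j}$ whose structure matches the hypotheses of Lemma \ref{lem:x_k}. In particular the $j=0$ residual term, $C\Delta T\ln N_c\,e^k$, must be placed on the $k$-level side of the recursion and not inside the Gronwall term; otherwise we would obtain $e^{CT\ln N_c}$, which is not uniform in $N_c$.
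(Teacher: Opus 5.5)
Your proposal is correct and follows essentially the same route as the paper. Both arguments use the uniform $\dot H^2(\Omega)$ bound from Corollary \ref{coro:regu_H2}, take $L^2(\Omega)$ norms in \eqref{eqn:para_whole} with Lemmas \ref{lem:lin}, \ref{lem:f-f} and \ref{lem:Res}, remove the $j=0$ nonlinear term by a discrete Gronwall argument in $n$ (keeping the $\Delta T\ln N_c$ terms on the iteration-$k$ side), and close the convolution recursion in $k$ with Lemma \ref{lem:x_k}. Your Gronwall step, with $x_n=e^{k+1}_{n,0}$ and the sum running only up to $n$, is stated more carefully than the paper's, and you correctly note that $a_0=\sum_i\|E_i^0\|_{L^2(\Omega)}$ must be bounded independently of $N_c$, which the paper silently absorbs into $C$; however, the bound $\|E_i^0\|_{L^2(\Omega)}\le C\Delta T$ that you invoke for this is an a priori coarse-scheme error estimate the paper does not prove, so it is an extra input rather than something already available.
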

\begin{proof} By Corollary~\ref{coro:regu_H2}, $\|U_n^k\|_{\dH2} \leq C$ for all $n$ and $k$. By taking the $L^2(\Omega)$ norm on both sides of the identity \eqref{eqn:para_whole}, and by Lemmas \ref{lem:f-f},  \ref{lem:lin} and \ref{lem:Res}, we obtain
	\begin{align*}
		\max_{0\leq n\leq N_c -1}\| E_{n+1}^{k+1}\|_{L^{2}(\Omega)}
		&\leq C\Delta T\sum_{i=1}^{N_c} \| E_{i}^{k+1}\|_{L^{2}(\Omega)} +\gl^{k+1} \sum_{i=0}^{N_c} \| E_{i}^{0}\|_{L^{2}(\Omega)} \\
		&\quad +C\Delta T \ln N_c \max_{0 \leq i \leq N_c } \| E_{i}^{k}\|_{L^{2}(\Omega)} +C\Delta T \ln N_c \sum_{j=1}^{k} \gl^j\max_{0 \leq i\leq N_c} \| E_{i}^{k-j}\|_{L^{2}(\Omega)}.
	\end{align*}
	The discrete Gronwall's inequality on the $(k+1)$-th iteration gives
	\begin{align*}
		\max_{0\leq n \leq N_c -1}\|E_{n+1}^{k+1}\|_{L^{2}\II} 
		&\leq e^{C T} \Big( \gl^{k+1} \sum_{i=0}^{N_c} \| E_{i}^{0}\|_{L^{2}\II}  + C\Delta T\ln N_c\max_{0 \leq i \leq N_c} \| E_{i}^{k}\|_{L^{2}\II}\\
		&\quad+ C \Delta T \ln N_c\sum_{j=1}^{k} \gl^j\max_{0\leq i \leq N_c} \| E_{i}^{k-j}\|_{L^{2}\II} \Big).
	\end{align*}
	Letting $x_{k}=\max_{0\leq i\leq N_c} \| E_{i}^{k}\|_{L^{2}\II}$ lead to 
	\begin{equation*}
		x_{k+1}\leq \gl^{k+1}f_{0}+\sum_{j=0}^{k} g_{j}x_{k-j}, \quad \text{with}~
		f_{0}=e^{CT} {\sum_{i=0}^{N_c} \| E_{i}^{0}\|_{L^{2}\II}}~\text{and}~ g_{j}= e^{CT}\gl^{j} C\Delta T \ln N_c.
	\end{equation*}
	Then Lemma \ref{lem:x_k} implies 
	the desired assertion, with $C_2 = Ce^{CT}  $.
\qed \end{proof}

\begin{remark}
	If problem \eqref{eqn:pde} is linear with a source $f(t)$, then by Lemma \ref{lem:expression_para}, the recursion for the parareal error reduces to $$E_{n+1}^{k+1}=\beta^{k+1} \sum_{i=k}^n \binom{i}{k} \alpha^{i-k}E_{n-i}^0.$$ 
	Then Lemma \ref{lem:lin} implies 
	$$\|E_{N_c}^{k+1}\|_{L^2\II } \leq C \gl^{k+1},$$
	which aligns with the linear convergence for linear parabolic equations.
\end{remark}

 \section{Numerical experiment}\label{sec:numerical}
 Now we numerically illustrate the theoretical finding in Theorem \ref{thm:main_thm_L2}. We 
 fix the domain $\Omega=(-1,1)$ and the final time $T=2$. Consider the semilinear parabolic equation
 \begin{equation*}
 	\partial_{t} u(x,t)=\partial_{xx} u(x,t)+c_L u(x,t)\left( 1-u(x,t)^2 \right) + g(x,t),
 \end{equation*}
 equipped with the forcing term $g(x,t)=\cos (\frac{\pi}{2} x) \cos (t)$ and a zero Dirichlet boundary condition. 
 We take the initial data 
 $$u_0 = \sqrt{x+1} \sin(2\pi x) \in \dot{H}^{2-\epsilon} \II\quad \text{for any small}~~\epsilon >0$$   
 Note that the constant $c_L$ in the nonlinear term controls the strength of nonlinearity. 
 
 We discretize the problem using the central finite difference method for the diffusion term $\partial_{xx}u$, with a mesh size of $h=1/256$, and employ SDIRK3 as the FPs. The temporal discretization parameters are set with a fine step size of $\Delta t = 1.5625\times 10^{-4}$. The parareal algorithm is initialized using the initial data $U_0^0$. We present the $L^2(\Omega)$ errors and compare different CPs across the settings $c_L\in \{1,5,10\}$ and $\Delta T \in \{ 0.05,0.0125,0.003125\}$ in Fig.~\ref{fig:Ex1}. 
 
 First, we observe that the $L^2$ errors exhibit steady linear convergence across all the settings. The performance of the parareal algorithm deteriorates as the degree of nonlinearity $c_L$ increases. Nonetheless, reducing the coarse time step size $\Delta T$ allows the convergence factor for each method to approach $\gl$, i.e., the convergence factor in the linear case; see Remark \ref{rem:gamma_lin} for the concrete values for the three CPs. These empirical observations fully align with the theoretical estimate in Theorem \ref{thm:main_thm_L2}.
 
 More precisely, for the BE method, decreasing the coarse time step size  $\Delta T$ improves the performance of the parareal algorithm. However, when $\Delta T$ becomes sufficiently small, the parareal algorithm tends to the convergence factor  $\gl \approx 0.298$, which is relatively large compared to the other two CPs and severely limits the convergence speed. For the two-stage Lobatto IIIC method, the CP resolves the nonlinear terms effectively and appears to be more robust with respect to the nonlinearity parameter $c_L$. When $\Delta T$ is sufficiently small, it still recovers the linear convergence factor, which ultimately limits further improvements in convergence speed.
 In sharp contrast, the OCP significantly reduces the linear convergence factor $\gl$ compared to both the BE method and the two-stage Lobatto IIIC method. These observations highlight the importance of designing appropriate CPs and conducting convergence analysis for linear problems to enhance the performance of the parareal algorithm.
 
 \begin{figure}[htbp!]
 	\centering
 	\includegraphics[width=0.99\linewidth]{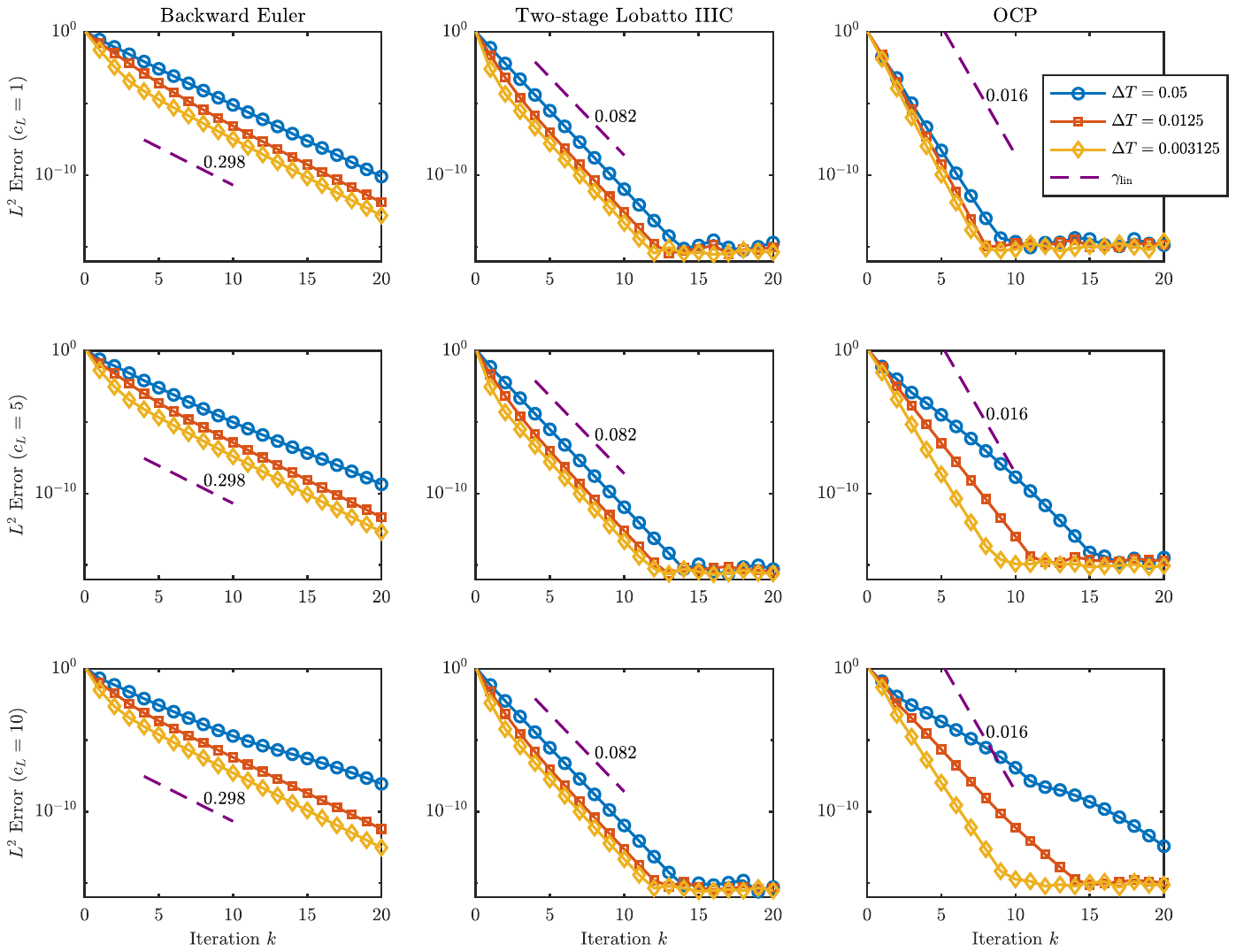}\vspace{-30pt}
 	\caption{The maximum $L^2$ error for three CPs versus the iteration $k$ over the setting $c_L\in \{1,5,10\}$ and $\Delta T \in \{ 0.05,0.0125,0.003125\}$. Left: BE; Middle: Two-stage Lobatto IIIC method; Right: OCP.} 
 	\label{fig:Ex1}
 \end{figure}
 
 \section{Conclusion}
 In this work, we have provided a novel convergence analysis of the parareal algorithm for solving semilinear parabolic equations with the $H^2$ initial data. 
 The algorithm employs a single-step coarse propagator and first-order linearization of the nonlinear term. 
 We have established a linear convergence rate of the parareal algorithm, and derived the convergence factor of the form $\gl + C_f\Delta T |\ln \Delta T|$, with the constant $C_f$ depending on the nonlinear function $f$. The theoretical findings have been also confirmed by numerical experiments. This result provides the much-needed theoretical justifications on the empirical observation in prior studies, and represents an important contribution to the theoretical analysis of the parareal algorithm.
 The analysis framework holds also the potential for analyzing more challenging scenarios, e.g., when the initial data is weaker (e.g., $u_0 \in L^2(\Omega)$) or when the nonlinearity is stronger. 
 \appendix
 
 \section{Proof of Lemma \ref{lem:f}}\label{sec:proof2}
 Since $f$ satisfies Assumption \ref{assum:f}, we have $\| f'(u)\|_{L^\infty \II} \leq C$ and $\| f'(u) - f'(v) \|_{L^2 \II} \leq C \|u-v\|_{L^2 \II}$. 
 For the estimate \eqref{eqn:basic-est1},
 \begin{align*}
 	\|( f'(u) - f'(v))( h) \|_{\dot{H}^{-2}\II}  &\leq C \sup_{\psi \in \dot{H}^{2}\II} \frac{(( f'(u) - f'(v))(h), \psi )}{\| \psi \|_{\dot{H}^{2 }\II}} \\
 	&\leq C\|( f'(u) - f'(v))(h) \|_{L^{1}\II} \sup_{\psi \in \dot{H}^2 \II}\frac{\| \psi \|_{L^{\infty}\II}}{\| \psi \|_{\dot{H}^{2}\II}}.
 \end{align*}
 By the Sobolev embedding $\| \psi \|_{L^{\infty}\II} \leq C\| \psi \|_{\dot{H}^{2}\II}$ (for $d=1,2,3$), we have
 \begin{align*}
 	&\|( f'(u) - f'(v))(h) \|_{\dot{H}^{-2}\II}  \leq C \|( f'(u) - f'(v))(h) \|_{L^{1} \II} \\
 	\leq& C \| f'(u) - f'(v) \|_{L^{2}\II} \| h \|_{L^{2} \II } 
 	\leq  C \| u - v \|_{L^{2}\II} \| h \|_{L^{2} \II}.
 \end{align*}
 This prove the estimate \eqref{eqn:basic-est1}.
 Next, the estimate \eqref{eqn:basic-est2} follows by
 \begin{equation*}
 	\|( f'(u) -f'(v))(h) \|_{L^{2} \II} \leq \| f'(u) -f'(v) \|_{L^{2} \II} \| h\|_{L^{\infty}\II} \leq C\| u-v\|_{L^{2} \II} \|h\|_{{H}^2 \II}.
 \end{equation*}
 For the estimate \eqref{eqn:basic-est3}, we obtain for $u,v \in \dot{H}^{2}(\Omega)$,
 \begin{align*}
 	\| \nabla(f(u) - f(v)) \|_{L^2(\Omega)} 
 	&\leq \| (f'(u) - f'(v)) \nabla u \|_{L^{2}(\Omega)} + \| f'(v) \|_{L^{\infty}(\Omega)} \| u - v \|_{\dot{H}^{1}(\Omega)} \\
 	&\leq \| f'(u) - f'(v) \|_{L^{\infty}(\Omega)} \| u \|_{\dot{H}^{1}(\Omega)} + C \| u - v \|_{\dot{H}^{1}(\Omega)} \\
 	&\leq C  \| u - v \|_{L^{\infty}(\Omega)} + C\| u - v \|_{\dot{H}^{1}(\Omega)} + C \| u - v \|_{\dot{H}^{2}(\Omega)}
 	\leq C \|u-v\|_{\dH2}.
 \end{align*}
 Further, since $f(u) - f(v)$ satisfies the zero boundary condition, $f(u) - f(v) \in \dH1$. Finally, for $u \in \dH2$, by the Gagliardo-Nirenberg interpolation inequality  $\| \nabla u\|_{L^{4}\II} \leq C\| u\|_{L^{\infty}\II }^{1/2} \| \Delta u\|_{L^{2} \II }^{1/2}$ \cite{nirenberg1966extended} and Assumption \ref{assum:f}, we have 
 \begin{align*}
 	\| f'(u) \|_{H^2(\Omega)} 
 	&\leq C(\| f'(u) \|_{L^2(\Omega)} + \| \nabla f'(u) \|_{L^2(\Omega)} + \| \Delta  f'(u) \|_{L^2(\Omega)}) \\
 	&\leq C\|f'(u)\|_{L^\infty \II} + C\| f''(u) \|_{L^\infty(\Omega)} \| \nabla u \|_{L^2(\Omega)} \\
 	&\quad + C\| f'''(u) \|_{L^\infty(\Omega)} \| \nabla u \|_{L^4(\Omega)}^2 
 	+ C\| f''(u) \|_{L^\infty(\Omega)} \|   u \|_{H^2(\Omega)} 
 	\leq C (1 + \| u \|_{\dot{H}^2(\Omega)} ).
 \end{align*}
 $\qed$

 \section{Proof of Theorem \ref{thm:inv_gamma_lin}}\label{sec:proof1}  The proof is lengthy and very technical, and it is divided into three steps. Let 
 \begin{equation}\label{eqn:eta}
 	\eta = \sup_{s \in (0, \infty)} \frac{|e^{-s} - R(s)|}{( 1 - |R(s)|)^2}\quad \mbox{and}\quad r_{0}=\inf \Big\{ r:\,\Big( 1-\frac{1}{r+1} \Big)^{r} \frac{4\eta}{r+1} \leq \gl,\,r\geq 2 \Big\}.
 \end{equation}
 By Assumption \ref{assum:R} (i), $\eta$ is finite. The parameter $r_0$ is chosen so that the summation of index over $r_0 j$ is bounded by $\frac{C}{\sqrt{j}}\gl^j$; see the end of Step (i).
 
 \medskip
 \noindent\textbf{Step (i):} Consider $i$ from $r_0j+1$ to $\infty$. By Stirling's formula $\sqrt{2\pi n} \left( \frac{n}{e} \right)^{n} \leq n!\leq \sqrt{2\pi n} \left( \frac{n}{e} \right)^{n} e^{\frac{1}{12n}}$ \cite[(1.53)]{spencer2014asymptopia},  we have 
 \begin{equation}\label{eqn:stirling}
 	\binom{i}{j} =\frac{i!}{j!\left( i-j \right) !} \leq \frac{\sqrt{2\pi i} \left( \frac{i}{e} \right)^{i} e^{\frac{1}{12}}}{\sqrt{2\pi j} \left( \frac{j}{e} \right)^{j} \sqrt{2\pi \left( i-j \right)} \left( \frac{i-j}{e} \right)^{i-j}} \leq \frac{e^{\frac{1}{12}}}{\sqrt{2\pi}} \sqrt{\frac{i}{j\left( i-j \right)}} \frac{i^{i}}{j^{j}\left( i-j \right)^{i-j}}.
 \end{equation}
 Then by the elementary inequality 
 \begin{equation}\label{eqn:basic-ineq0}
 	\theta^{a} \left( 1-\theta \right)^{b} \leq \frac{a^{a}b^{b}}{\left( a+b \right)^{a+b}},\quad \mbox{for }\theta\in [0,1], a,b>0,
 \end{equation}
 and the definition of $\eta$ in \eqref{eqn:eta}, we deduce 
 \begin{align*}
 	{\rm I}:=&\sum_{i=r_{0}j+1}^{\infty} \binom{i}{j} \sup_{s\in \left( 0,\infty \right)} |\left( R\left( s \right) \right)^{i-j} \left( e^{-s}-R\left( s \right) \right)^{j} | 
 	\leq  \eta^{j} \sum_{i=r_{0}j+1}^{\infty} \binom{i}{j} \sup_{s\in \left( 0,\infty \right)} |R\left( s \right) |^{i-j}\left( 1-|R\left( s \right) | \right)^{2j} \\
 	\leq &\, C\eta^{j} \sum_{i=r_{0}j+1}^{\infty} \frac{1}{\sqrt{j}} \frac{i^{i}}{j^{j}\left( i-j \right)^{i-j}} \frac{\left( i-j \right)^{i-j} \left( 2j \right)^{2j}}{\left( i+j \right)^{i+j}}
 	= C\eta^{j} \sum_{i=r_{0}j+1}^{\infty} \frac{1}{\sqrt{j}} \frac{i^{i}}{(i+j)^i} \frac{\left( 4j \right)^{j}}{\left( i+j \right)^{j}}.
 \end{align*}
 Now for $i\geq r_0j+1$, we have
 \begin{equation*}
 	\frac{i^i}{(i+j)^i} = \left(1-\frac{1}{1+\frac{i}{j}}\right)^{\frac{i}{j}\cdot j}\leq \left(1-\frac{1}{1+r_0}\right)^{r_0j},
 \end{equation*}
 since the function $r\mapsto (1-\frac{1}{1+r})^r$ is decreasing.
 Consequently, by the definition of $r_0$ in \eqref{eqn:eta}
 \begin{align*}
 	{\rm I}\leq & 
 	\frac{C}{\sqrt{j}} \left( 4\eta\left( 1-\frac{1}{r_{0}+1} \right)^{r_{0}}  \right)^{j} \sum_{i=r_{0}j+1}^{\infty} \frac{j^{j}}{\left( i+j \right)^{j}} \\
 	\leq& \, C\left( 4\eta\left( 1-\frac{1}{r_{0}+1} \right)^{r_{0}} \right)^{j} \frac{j^{j}}{\sqrt{j} \left( j-1 \right) \left( j\left( r_{0}+1 \right) \right)^{j-1}}  \leq \frac{C}{\sqrt{j}} \gl^{j}.
 \end{align*}
 \textbf{Step (ii):} 
 Consider the index $i$ ranging from $j$ to $r_0j$.
 Let $\{r_k^*\}_{k=1}^K$ be the zeros of $h(r)$ (cf. \eqref{hr}). By Assumption \ref{assum:R} (iii) and Remark \ref{rem:ass}, there exists a $\delta>0$ such that for all $r \in \bigcup_{k=1}^K (r_k^* - \delta, r_k^* + \delta)$, the inequality \eqref{eqn:f_lower bound} holds, and moreover $r_1^\ast - 1>\delta$. 
 Let $I_K = [1, r_0] \setminus \bigcup_{k=1}^K (r_k^* - \delta, r_k^* + \delta)$.
 Therefore, if $r \in I_K$, then we have $h(r)\geq h_0>0$. Now consider the following splitting:
 \begin{align}\label{eqn:Irj}
 	{\rm II=}&\sum_{\substack{i \in I_{K}j \cap \mathbb{N}}} \binom{i}{j}
 	\sup_{s \in (0,\infty)} \left|
 	R(s)^{i-j} \left( e^{-s} - R(s) \right)^j
 	\right| \nonumber \\
 	\leq &\sup_{s\in(0,\infty)}|e^{-s}- R(s)|^j +
 	\sum_{\substack{i \in I_{K}j \cap \mathbb{N} \\ i \neq j}} 
 	\binom{i}{j}
 	\sup_{s\in(0,\infty)} \left|
 	R(s)^{i-j} \left( e^{-s} - R(s) \right)^j
 	\right|.
 \end{align}
 Note that
 \begin{equation*}
 	\sup_{s\in(0,\infty)}|e^{-s}- R(s)|^j  \le \sup_{s\in(0,\infty)} \left|\frac{e^{-s}- R(s)}{1-|R(s)|}
 	\right|^j = \gl^j.
 \end{equation*}
 Let $s_{i,j}$ maximize $|Q(s,i/j)|$, with $Q(s,r)=R(s)^{r-1} (e^{-s}-R(s))$. We claim that for $i \in I_K j \cap \mathbb{N}$, there exists $c_0$ independent of $j$ such that 
 \begin{align} \label{eqn:c0-gamma}
 	|\gamma(s_{i,j})|\leq c_0 < \gl.
 \end{align}
 Suppose that no such $c_0$ exists. Then, for $j\geq 2$, let $i\left( j \right) := \arg\max_{i \in I_{K}j \cap \mathbb{N}} \left|\gamma \left( s_{i,j} \right) \right|$ and set $s_j := s_{i(j),j}$. Recall that $s_0$ is the unique maximizer of $|\gamma(s)|$, cf. Assumption \ref{assum:R}(iv). Now consider two cases: (i) There exists some $j_0$ such that $s_{j_0}=s_0$. Then $s_{k,j_0}=s_0$ for all $k \in \mathbb{N}$; (ii) $s_{j}\neq s_0$ holds for all $j\in \mathbb{N}$. In either case, there exists a subsequence $\{ s_{j_n}\}_{n=1}^\infty$ such that $\{\gamma(s_{j_n})\}_{n=1}^\infty$ converges to $\gl$ as $n \to \infty$. Moreover, the uniqueness of $s_0$ implies $s_{j_n}\to s_0$. Since $|\gamma(s)|$ achieves its supremum at $s_0$, Assumption \ref{assum:R} (iv) implies  $R(s_0)>0$, and hence, $R(s_{j_n})>0$ for all large $n$. Now we analyze $\partial_sQ(s,r)$ and $\gamma'(s)$ under the condition $R(s)>0$. Note that 
 \begin{align}
 	\partial_{s} Q\left( s,r \right) &= \left( r-1 \right) \left( R\left( s \right) \right)^{r-2} \left( R^{\prime}\left( s \right) \left( e^{-s}-R\left( s \right) \right) -\frac{R\left( s \right)}{r-1} \cdot \left( e^{-s}+R^{\prime}\left( s \right) \right) \right) \nonumber \\
 	&= \left( r-1 \right) \left( R\left( s \right) \right)^{r-2} \left( \left( 1-R\left( s \right) \right)^{2} \gamma^{\prime} \left( s \right) +\left( e^{-s}+R^{\prime}\left( s \right) \right) \left( 1-R\left( s \right) -\frac{R(s)}{r-1} \right) \right). \label{eqn:Q,gamma}
 \end{align}
 At $s=s_{j_n}$ and $r= i(j_n)/j_n$, we have $\partial_sQ(s,r)=0$. Under Assumption \ref{assum:R} (iv) and the condition $s_{j_n} \to s_0$ as $n \to \infty$,
 there exist $n_0>0$ and $A_0>0$ such that, for any $n>n_0$, we have $|e^{-s_{j_{n}}}+R'(s_{j_n})|\geq A_0$, $(1-R(s_{j_n}))^2\geq A_0$ and $R(s_{j_n}) \geq A_0$. Thus, the condition $\partial_s Q(s_{j_n},i(j_n)/j_n)=0$ implies
 \begin{equation*}
 	\gamma^{\prime} \left( s_{j_n} \right) =\frac{e^{-s_{j_n}}+R^{\prime}\left( s_{j_n} \right)}{\left( 1-R\left( s_{j_n} \right) \right)^{2}} \cdot \left( 1-R\left( s_{j_n} \right) -\frac{R\left( s_{j_n} \right)}{i(j_n)/j_n-1} \right).
 \end{equation*}
 If $\gamma'(s_{j_n}) \to 0$, then $h(i(j_n)/j_n) \to 0$ as $n \to \infty$, which contradicts the condition $h(i(j_n)/j_n)\geq h_0>0$ when $i(j_n) \in I_Kj_n\cap \mathbb{N}$, and hence the claim \eqref{eqn:c0-gamma} follows.
 Now by the claim \eqref{eqn:c0-gamma} and the estimate \eqref{eqn:basic-ineq0}, we have 
 {\begin{equation*}
 		\begin{aligned}
 			\left|
 			R(s_{i,j})^{i-j} \left( e^{-s_{i,j}} - R(s_{i,j}) \right)^j \right| &= 
 			\left|
 			R(s_{i,j})^{i-j} \left( 1- |R(s_{i,j})|\right)^j \right| \left|
 			\frac{e^{-s_{i,j}}- R(s_{i,j})}{1-|R(s_{i,j})|} \right|^j \\
 			& = c_0^j   \left(
 			|R(s_{i,j})|^{i-j} \left( 1- |R(s_{i,j})|\right)^j \right) 
 			\le c_0^j \frac{(i-j)^{i-j}j^j}{i^i}.
 		\end{aligned}
 \end{equation*}}
 Next, by Stirling's formula \eqref{eqn:stirling} and the assertion $c_0 < \gl$, we arrive at
 \begin{equation*}
 	\begin{aligned}
 		\sum_{\substack{i \in I_{K}j \cap \mathbb{N} \\ i \neq j}} 
 		\binom{i}{j}
 		\sup_{s\in(0,\infty)} \left|
 		R(s)^{i-j} \left( e^{-s} - R(s) \right)^j
 		\right| & \le \sum_{i=j+1}^{\lfloor r_0 j\rfloor+1} \binom{i}{j}
 		\left|
 		R(s_{i,j})^{i-j} \left( e^{-s_{i,j}} - R(s_{i,j}) \right)^j
 		\right| \\
 		& \le C c_0^j \sum_{i=j+1}^{\lfloor r_{0}j\rfloor +1} \sqrt{\frac{i}{j\left( i-j \right)}} \le C j c_0^j \le C\gl^j.
 	\end{aligned}
 \end{equation*}
 \textbf{Step (iii):}
 Let $B_k=(r_k^\ast - \delta,r_k^\ast + \delta)$, $k=1,\cdots,K$. Consider the case $i \in (\bigcup_{k=1}^K B_k) \cap \mathbb{N}$,
 \begin{align*}
 	{\rm III}
 	&= \sum_{k=1}^{K} \sum_{i \in B_{k}j \cap \mathbb{N}} \binom{i}{j} \sup_{s \in (0,\infty)} \left| \big(R(s)\big)^{i-j} \big(e^{-s} - R(s)\big)^j \right| \\
 	&\leq \gl^{j} \sum_{k=1}^{K} \sum_{i \in B_{k}j \cap \mathbb{N}} \binom{i}{j} \big|R(s_{i,j})\big|^{i-j} \big(1 - |R(s_{i,j})|\big)^j,
 \end{align*}
 where $s_{i,j}$ maximizes $|Q(s,i/j)|$. By Assumption \ref{assum:R} (iii) and Remark \ref{rem:ass}, there exists $a>0$ such that
 \begin{equation*}
 	|R(s_{i,j}) - (1-r_i^{-1})| \ge a |r_i-r_k^*|, \quad \forall {r_i=i/j}\in B_k.
 \end{equation*}
 Next, we bound for the case $R(s_{i,j})-(1-r_i^{-1})\geq a(r_k^\ast -r_i)$ when $r_k^\ast -\delta\leq r_i\leq r_k^\ast$ and other cases follow similarly. 
 Note that $r > 1$, and the map $ x\mapsto x^{r-1}(1-x)$ attains its maximum at $x = 1 - r^{-1}$ and decreases monotonically over the interval $[1 - r^{-1}, 1]$. Therefore,
 for all $r_i\in [r_k^\ast -\delta, r_k^\ast]$,
 \begin{equation}\label{eqn:R,1-R-rev}
 	\big|R(s_{i,j})\big|^{r-1} \big(1 - |R(s_{i,j})|\big)
 	\le  (1 - r_i^{-1} + a h_{i,k})^{r-1} (r_i^{-1}- a h_{i,k}),\quad \mbox{with }h_{i,k} = r_k^* - r_i. 
 \end{equation}
 This further implies
 \begin{equation*} 
 	\sum_{i \in B_{k}j \cap \mathbb{N}} \binom{i}{j} \big|R(s_{i,j})\big|^{i-j} \big(1 - |R(s_{i,j})|\big)^j \le C \sum_{i = \lfloor (r_k - \delta)j \rfloor + 1}^{\lfloor r_k j \rfloor} \binom{i}{j} \left[\left(1 - \frac{1}{r_i} + a h_{i,k}\right)^{r_i-1} \left(\frac{1}{r_i}- a h_{i,k}\right)\right]^j.
 \end{equation*}
 Let $g_k(r):=ar (r^\ast_k -r)$. By Stirling's formula \eqref{eqn:stirling}, we obtain the following bound on the summand:
 \begin{align*}
 	& \binom{i}{j} \left[\left(1 - \frac{1}{r_i} + a h_{i,k}\right)^{r_i-1} \left(\frac{1}{r_i}- a h_{i,k}\right)\right]^j
 	\leq \frac{C}{\sqrt{j}}  \frac{(r_i)^{i}}{(r_i - 1)^{i-j}} \left(\frac{r_i - 1 + a r_i h_{i,k}}{r_i}\right)^{i-j} \left(\frac{1 - a r_i h_{i,k}}{r_i}\right)^{j} \\
 	=& \frac{C }{\sqrt{j}}   \left(1 + \frac{g_{k}(r_i)}{r_i - 1}\right)^{i-j} \left(1 - g_{k}(r_i)\right)^{j} = \frac{C }{\sqrt{j}}  \left(\left(1 + \frac{g_{k}(r_i)}{r_i - 1}\right)^{r_i - 1} \left(1 - g_{k}(r_i)\right)\right)^{j}.
 \end{align*}
 For sufficiently small $\delta>0$, we have
 $$  \left(1 + \frac{g_{k}(r_i)}{r_i - 1}\right)^{r_i - 1} \big(1 - g_{k}(r_i)\big) \le e^{-C (h_{i,k})^2}, \quad \forall r_i \in (r_k^\ast-\delta,r_k^\ast).$$
 Now let $x_k^{(i)}=\frac{i-jr_k^\ast}{\sqrt{j}}$ and note the identity $(x_k^{(i)})^2= j^{-1}(i-jr_k^\ast)^2 = j (h_{i,k})^2 $. Then we derive
 \begin{align*}
 	&\sum_{i = \lfloor (r_{k} - \delta)j \rfloor + 1}^{\lfloor r_{k}j \rfloor} \left(\left(1 + \frac{g_{k}(r_i)}{r_i - 1}\right)^{r_i - 1} \left(1 - g_{k}(r_i)\right)\right)^{j}  \le \sum_{i = \lfloor (r_{k} - \delta)j \rfloor + 1}^{\lfloor r_{k}j \rfloor} e^{-C j (h_{i,k})^2} = \sum_{i = \lfloor (r_{k} - \delta)j \rfloor + 1}^{\lfloor r_{k}j \rfloor} e^{-C (x_k^{(i)})^2} \\
 	=& \sum_{i = \lfloor (r_{k} - \delta)j \rfloor + 1}^{\lfloor r_{k}j \rfloor} e^{-C (x_k^{(i)})^2} \sqrt{j} (x_{k}^{(i+1)} - x_{k}^{(i)}) 
 	\le C \sqrt{j }\int_0^\infty e^{-Cx^2} \, \d x \le C \sqrt{j }.
 \end{align*}
 Thus we obtain ${\rm III} \le C K \gl^j$.
 This completes the proof of the theorem.
 $\qed$

 \section{Two auxiliary results}
 \begin{lemma}[Discrete Gronwall's  inequality]\label{lem:x_k_1}
 	Consider the following recurrence inequality for $n\geq 1$ and $x_n>0$:
 	\begin{align*}
 		x_{n+1} \le C \Delta T \sum_{i=1}^{n} T_{n-i+1}^{-\frac12} x_{i} + a,\quad \text{with}~a>0.
 	\end{align*}
 	Then there exists a constant $C$  independent of $n$ and $a$ such that
 	$x_{n+1} \leq C\sqrt{T} a e^{CT}$.
 \end{lemma}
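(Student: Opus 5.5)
The plan is to derive this singular discrete Gronwall bound by comparison with a majorizing sequence, and then estimate that sequence with a weakly singular kernel argument. Here $T_m=m\Delta T$, so the kernel is $\Delta T\,T_{n-i+1}^{-1/2}=\sqrt{\Delta T}\,(n-i+1)^{-1/2}$.

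First I would reduce to monotone sequences. Define $y_1=x_1$ and
\begin{equation*}
y_{n+1}=C\Delta T\sum_{i=1}^{n}T_{n-i+1}^{-1/2}y_i+a.
\end{equation*}
Since all coefficients are nonnegative, induction on $n$ gives $x_n\le y_n$. It also gives $y$ nondecreasing, provided we interpret $x_1\le a$, or simply replace $a$ by $\max(a,x_1)$, which is harmless. So it suffices to bound a nondecreasing $y$.

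Second, I would iterate the inequality once to remove the singularity. Substituting the bound for $y_i$ into itself gives a double sum with kernel
\begin{equation*}
\Delta T^2\sum_{l<i\le n}T_{n-i+1}^{-1/2}T_{i-l}^{-1/2}.
\end{equation*}
By the discrete Beta-function estimate
\begin{equation*}
\Delta T\sum_{m=1}^{M-1}T_m^{-1/2}T_{M-m}^{-1/2}\le C\int_0^1 \theta^{-1/2}(1-\theta)^{-1/2}\,\d\theta=C\pi,
\end{equation*}
this double sum is bounded by $C\Delta T\cdot C\pi$ per $l$, which is a bounded nonsingular kernel. The single-iterate term contributes
\begin{equation*}
a\,C\Delta T\sum_{m=1}^{n}T_m^{-1/2}\le 2Ca\sqrt{T_n}\le 2Ca\sqrt T.
\end{equation*}
Together these give
\begin{equation*}
y_{n+1}\le a\bigl(1+2C\sqrt T\bigr)+C'\Delta T\sum_{l=1}^{n}y_l .
\end{equation*}

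Third, I would apply the standard discrete Gronwall inequality with a regular kernel. It yields
\begin{equation*}
y_{n+1}\le a\bigl(1+2C\sqrt T\bigr)e^{C'T_n}\le C\sqrt T\,a\,e^{CT},
\end{equation*}
after absorbing constants, using that $T$ is bounded below by a fixed positive quantity or simply writing $1+\sqrt T\le C\sqrt T$ for $T\ge T_{\min}$. Alternatively, monotonicity gives a one-line route: since $y$ is nondecreasing, $y_{n+1}\le 2C\sqrt{T_n}\,y_n+a$. That route only closes when $\sqrt T$ is small, so the iteration approach is the robust one.

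The main obstacle is the discrete Beta-function estimate for the convolution of two $m^{-1/2}$ kernels uniformly in $M$. I would prove it by comparing each sum with the integral, using monotonicity of $m^{-1/2}$ on each half of the range $m\le M/2$ and $m>M/2$. Each half is bounded by $2\sqrt{M}\cdot(M/2)^{-1/2}\le C$ after multiplying by $\Delta T$ and dividing by $\Delta T$. The rest is bookkeeping of constants, keeping them independent of $n$, $a$ and $\Delta T$.
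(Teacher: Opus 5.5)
Your proposal is correct and follows essentially the same route as the paper. Both proofs iterate the inequality once, bound the convolution $\sum_{k=1}^{n-j}(n-j-k+1)^{-1/2}k^{-1/2}$ uniformly (the paper's $b_j\le 4$, your discrete Beta-function estimate), bound the $a$-term by $Ca\sqrt{T}$ (absorbing the additive $1$ since $T$ is fixed), and close with the standard discrete Gronwall inequality. The monotone majorant $y_n$ is unnecessary, but your explicit requirement $x_1\le a$ is a genuine omission in the lemma as stated; it holds in the application because $E_1^{k+1}=0$.
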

 \begin{proof}
 	By iterating the given inequality once, the inequality becomes
 	\begin{align*}
 		x_{n+1}  \le C \Delta T \sum_{i=1}^{n}\frac{1}{\sqrt{(n-i+1)\Delta T}} x_{i} + a \leq C\Delta T\sum_{j=1}^{n-1}b_jx_j + aC\sqrt{\Delta T}\sum_{m=1}^n \frac{1}{\sqrt m} + a,
 	\end{align*}
 	where $b_j = \sum_{k=1}^{n-j} \frac{1}{\sqrt{n-j-k+1}}\frac{1}{\sqrt{k}}$ and $b_j \leq 4$ holds for all $j$. Then the inequality becomes
 	\begin{align*}
 		x_{n+1} &\leq C\Delta T\sum_{j=1}^{n-1} x_j + a(C\sqrt{T_n}+1) \leq C\Delta T\sum_{j=1}^{n-1} x_j +a C\sqrt{T}.
 	\end{align*}
 	Then we can conclude the lemma using the standard Gronwall's inequality.
 \qed \end{proof}
 
 \begin{lemma}\label{lem:x_k}
 	Consider the following recurrence inequality for $x_k > 0$:
 	\begin{equation*}
 		x_{k+1} \leq \gamma^{k+1} f_0 + h \sum_{j=0}^k \gamma^j x_{k-j}, \quad \text{with } h, \gamma < 1.
 	\end{equation*}
 	Then for  $M =\max \{f_0, x_0 \} > 0 $, there holds
 	\begin{equation*}
 		x_k \leq M \cdot (\gamma + h)^k, \quad \forall  k \geq 0.
 	\end{equation*}
 \end{lemma}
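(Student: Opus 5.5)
The plan is to prove the stronger statement $x_k \le M(\gamma+h)^k$ by strong induction on $k$, using the recursion exactly as stated. Throughout we take $\gamma,h\ge 0$.

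The base case $k=0$ is immediate, since $x_0\le M$ by the definition of $M=\max\{f_0,x_0\}$.

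For the inductive step, assume $x_m\le M(\gamma+h)^m$ for all $0\le m\le k$. Insert this into the recursion to get
\begin{equation*}
x_{k+1}\le \gamma^{k+1}M + hM\sum_{j=0}^{k}\gamma^j(\gamma+h)^{k-j}.
\end{equation*}
The key observation is a telescoping binomial-type identity. Write $a=\gamma$ and $b=\gamma+h$, so that $b-a=h$. Then
\begin{equation*}
(b-a)\sum_{j=0}^{k}a^jb^{k-j}=b^{k+1}-a^{k+1},
\end{equation*}
which is the standard factorization of $b^{k+1}-a^{k+1}$. Hence $h\sum_{j=0}^{k}\gamma^j(\gamma+h)^{k-j}=(\gamma+h)^{k+1}-\gamma^{k+1}$. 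Substituting gives
\begin{equation*}
x_{k+1}\le \gamma^{k+1}M+M\big((\gamma+h)^{k+1}-\gamma^{k+1}\big)=M(\gamma+h)^{k+1},
\end{equation*}
which closes the induction.

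The identity holds for all $a,b$, including $h=0$, where both sides vanish. So no case split is needed, and nonnegativity is used only to preserve the inequality when substituting the inductive bounds, since all coefficients $\gamma^j h$ are $\ge 0$. The hypothesis $h,\gamma<1$ is not needed for the inequality itself. It only ensures that the bound is useful, giving decay when $\gamma+h<1$.

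I expect no real obstacle here. The only point needing care is recognizing the exact telescoping sum, which makes the estimate sharp without any loss of constants.
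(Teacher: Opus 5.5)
Your proof is correct and follows the paper's argument. Both proceed by induction on $k$, substitute the inductive hypothesis into the recursion, and evaluate $\sum_{j=0}^k\gamma^j(\gamma+h)^{k-j}$ in closed form: the paper writes it as $(\gamma+h)^{k+1}(1-r^{k+1})/h$ with $r=\gamma/(\gamma+h)$, while you use the equivalent factorization of $(\gamma+h)^{k+1}-\gamma^{k+1}$, which avoids dividing by $h$ and makes explicit the nonnegativity of $\gamma,h$ and the bound $f_0\le M$ that the paper uses only implicitly.
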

 \begin{proof}
 	The proof is based on induction. For $k=0$, by the definition of $M$, $M\geq f_0$. Thus
 	\begin{equation*}
 		x_1 \leq \gamma f_0 + h x_0 \leq \gamma f_0 + h M \leq M(\gamma + h).
 	\end{equation*}
 	Now suppose that the estimate $ x_j \leq M(\gamma + h)^j $ holds for all $ j \leq k $ for some $k$. Then we obtain
 	\begin{equation*}
 		x_{k+1} \leq \gamma^{k+1} f_0 + h \sum_{j=0}^k \gamma^j \cdot M(\gamma + h)^{k-j}.
 	\end{equation*}
 	Meanwhile, by the geometric series sum 
 	\begin{equation*}
 		\sum_{j=0}^{k} \gamma^{j} \left( \gamma +h \right)^{k-j} =\left( \gamma +h \right)^{k} \sum_{j=0}^{k} \left( \frac{\gamma}{\begin{gathered}\gamma +h\end{gathered}} \right)^{j} =\left( \gamma +h \right)^{k+1} \frac{1-r^{k+1}}{h},
 	\end{equation*}
 	where $r=\frac{\gamma}{\gamma + h}$, we have
 	\begin{align*}
 		x_{k+1} &\leq \gamma^{k+1} f_0 + M (\gamma +h)^{k+1} (1-r^{k+1}) \\
 		&= M (\gamma+h)^{k+1} r^{k+1} + M (\gamma +h)^{k+1} (1-r^{k+1})=M(\gamma +h)^{k+1}.
 	\end{align*}
 	This completes the proof of the lemma.
 \qed \end{proof}

\bibliographystyle{spmpsci}
\bibliography{reference}

@article{jin2025optimizing,
	title={Optimizing Coarse Propagators in Parareal Algorithms},
	author={Jin, Bangti and Lin, Qingle and Zhou, Zhi},
	journal={SIAM J. Sci. Comput.},
	volume={47},
	number={2},
	pages={A735--A761},
	year={2025},
	publisher={SIAM}
}

@article {GanderKwok:2020,
    AUTHOR = {Gander, Martin J. and Kwok, Felix and Salomon, Julien},
     TITLE = {Para{O}pt: a parareal algorithm for optimality systems},
   JOURNAL = {SIAM J. Sci. Comput.},
  FJOURNAL = {SIAM Journal on Scientific Computing},
    VOLUME = {42},
      YEAR = {2020},
    NUMBER = {5},
     PAGES = {A2773--A2802},
      ISSN = {1064-8275,1095-7197},
   MRCLASS = {49M27 (49M05 49M41 65F08 65K10 65Y05 68W10)},
  MRNUMBER = {4150714},
       DOI = {10.1137/19M1292291},
       URL = {https://doi-org.ezproxy.lb.polyu.edu.hk/10.1137/19M1292291},
}

@article {Fang:2022,
    AUTHOR = {Fang, Liang and Vandewalle, Stefan and Meyers, Johan},
     TITLE = {A parallel-in-time multiple shooting algorithm for large-scale
              {PDE}-constrained optimal control problems},
   JOURNAL = {J. Comput. Phys.},
  FJOURNAL = {Journal of Computational Physics},
    VOLUME = {452},
      YEAR = {2022},
     PAGES = {Paper No. 110926, 24},
      ISSN = {0021-9991,1090-2716},
   MRCLASS = {65M06 (49M41 65L10 65Y05)},
  MRNUMBER = {4361834},
       DOI = {10.1016/j.jcp.2021.110926},
       URL = {https://doi-org.ezproxy.lb.polyu.edu.hk/10.1016/j.jcp.2021.110926},
}

@incollection {BalMaday:2002,
	AUTHOR = {Bal, Guillaume and Maday, Yvon},
	TITLE = {A ``parareal'' time discretization for non-linear {PDE}'s with
	application to the pricing of an {A}merican put},
	BOOKTITLE = {{Recent Developments in Domain Decomposition Methods
	({Z}\"{u}rich, 2001)}},
	SERIES = {Lect. Notes Comput. Sci. Eng.},
	VOLUME = {23},
	PAGES = {189--202},
	PUBLISHER = {Springer, Berlin},
	YEAR = {2002},
	ISBN = {3-540-43413-5},
	MRCLASS = {65M06 (91B28)},
	MRNUMBER = {1962689},
	DOI = {10.1007/978-3-642-56118-4\_12},
	URL = {https://doi.org/10.1007/978-3-642-56118-4_12},
}

@article {PagesPironneau:2016,
	AUTHOR = {Pag\`es, Gilles and Pironneau, Olivier and Sall, Guillaume},
	TITLE = {The parareal algorithm for {A}merican options},
	JOURNAL = {C. R. Math. Acad. Sci. Paris},
	FJOURNAL = {Comptes Rendus Math\'{e}matique. Acad\'{e}mie des Sciences.
	Paris},
	VOLUME = {354},
	YEAR = {2016},
	NUMBER = {11},
	PAGES = {1132--1138},
	ISSN = {1631-073X,1778-3569},
	MRCLASS = {65C05 (65M75 65Y05 91G20 91G60)},
	MRNUMBER = {3566516},
	DOI = {10.1016/j.crma.2016.09.010},
	URL = {https://doi.org/10.1016/j.crma.2016.09.010},
}

@article {BrehierWang:2020,
	AUTHOR = {Brehier, Charles-Edouard and Wang, Xu},
	TITLE = {On parareal algorithms for semilinear parabolic stochastic
	{PDE}s},
	JOURNAL = {SIAM J. Numer. Anal.},
	FJOURNAL = {SIAM Journal on Numerical Analysis},
	VOLUME = {58},
	YEAR = {2020},
	NUMBER = {1},
	PAGES = {254--278},
	ISSN = {0036-1429,1095-7170},
	MRCLASS = {65M12 (60H15 60H35)},
	MRNUMBER = {4049401},
	MRREVIEWER = {Marco\ P.\ Cabral},
	DOI = {10.1137/19M1251011},
	URL = {https://doi.org/10.1137/19M1251011},
}

@article {ArielKim:2016,
	AUTHOR = {Ariel, Gil and Kim, Seong Jun and Tsai, Richard},
	TITLE = {Parareal multiscale methods for highly oscillatory dynamical
	systems},
	JOURNAL = {SIAM J. Sci. Comput.},
	FJOURNAL = {SIAM Journal on Scientific Computing},
	VOLUME = {38},
	YEAR = {2016},
	NUMBER = {6},
	PAGES = {A3540--A3564},
	ISSN = {1064-8275,1095-7197},
	MRCLASS = {65L05 (37N30 65Y05)},
	MRNUMBER = {3569557},
	MRREVIEWER = {Gabriela\ Schranz-Kirlinger},
	DOI = {10.1137/15M1011044},
	URL = {https://doi.org/10.1137/15M1011044},
}

@article {XuHesthaven:2015,
	AUTHOR = {Xu, Qinwu and Hesthaven, Jan S. and Chen, Feng},
	TITLE = {A parareal method for time-fractional differential equations},
	JOURNAL = {J. Comput. Phys.},
	FJOURNAL = {Journal of Computational Physics},
	VOLUME = {293},
	YEAR = {2015},
	PAGES = {173--183},
	ISSN = {0021-9991,1090-2716},
	MRCLASS = {65L05 (34A08 65L20 65L60 65Y05)},
	MRNUMBER = {3342465},
	MRREVIEWER = {Duygu\ D\"{o}nmez Demir},
	DOI = {10.1016/j.jcp.2014.11.034},
	URL = {https://doi.org/10.1016/j.jcp.2014.11.034},
}

@article {Engblom:2009,
	AUTHOR = {Engblom, Stefan},
	TITLE = {Parallel in time simulation of multiscale stochastic chemical
	kinetics},
	JOURNAL = {Multiscale Model. Simul.},
	FJOURNAL = {Multiscale Modeling \& Simulation. A SIAM Interdisciplinary
	Journal},
	VOLUME = {8},
	YEAR = {2009},
	NUMBER = {1},
	PAGES = {46--68},
	ISSN = {1540-3459,1540-3467},
	MRCLASS = {65C30 (60H35 60J22 60J75 65Y05)},
	MRNUMBER = {2575044},
	MRREVIEWER = {Gon\c{c}alo\ Dos Reis},
	DOI = {10.1137/080733723},
	URL = {https://doi.org/10.1137/080733723},
}

@article {GWZ:Acta,
	AUTHOR = {Gander, Martin J. and Wu, Shu-Lin and Zhou, Tao},
	TITLE = {Time parallelization for hyperbolic and parabolic problems},
	JOURNAL = {Acta Numer.},
	FJOURNAL = {Acta Numerica},
	VOLUME = {34},
	YEAR = {2025},
	PAGES = {385--489},
	ISSN = {0962-4929},
	MRCLASS = {65M55 (65M06 65M12 65M15 65Y05)},
	MRNUMBER = {4926314},
	DOI = {10.1017/S0962492924000072},
	URL = {https://doi.org/10.1017/S0962492924000072},
}

@article {FriedhoffSouthworth:2021,
	AUTHOR = {Friedhoff, Stephanie and Southworth, Ben S.},
	TITLE = {On ``optimal'' {$h$}-independent convergence of parareal and
	multigrid-reduction-in-time using {R}unge-{K}utta time
	integration},
	JOURNAL = {Numer. Linear Algebra Appl.},
	FJOURNAL = {Numerical Linear Algebra with Applications},
	VOLUME = {28},
	YEAR = {2021},
	NUMBER = {3},
	PAGES = {e2301, 30},
	ISSN = {1070-5325},
	MRCLASS = {65L06 (65L20 65Y05)},
	MRNUMBER = {4242247},
	MRREVIEWER = {Bernhard A. Schmitt},
	DOI = {10.1002/nla.2301},
	URL = {https://doi.org/10.1002/nla.2301},
}

@article {DeSterck:2021,
	AUTHOR = {De Sterck, Hans and Falgout, Robert D. and Friedhoff,
	Stephanie and Krzysik, Oliver A. and MacLachlan, Scott P.},
	TITLE = {Optimizing multigrid reduction-in-time and parareal
	coarse-grid operators for linear advection},
	JOURNAL = {Numer. Linear Algebra Appl.},
	FJOURNAL = {Numerical Linear Algebra with Applications},
	VOLUME = {28},
	YEAR = {2021},
	NUMBER = {4},
	PAGES = {Paper No. e2367, 22},
	ISSN = {1070-5325},
	MRCLASS = {65M20 (65L06 65M22 65Y05)},
	MRNUMBER = {4284091},
	MRREVIEWER = {Daniel R. Reynolds},
	DOI = {10.1002/nla.2367},
	URL = {https://doi.org/10.1002/nla.2367},
}

@article {Bossuyt:2025,
    AUTHOR = {Bossuyt, Ignace and Vandewalle, Stefan and Samaey, Giovanni},
     TITLE = {Monte {C}arlo-moments micro-macro {P}arareal method for
              unimodal and bimodal scalar {M}c{K}ean-{V}lasov {SDE}s},
   JOURNAL = {SIAM J. Sci. Comput.},
  FJOURNAL = {SIAM Journal on Scientific Computing},
    VOLUME = {47},
      YEAR = {2025},
    NUMBER = {6},
     PAGES = {A3239--A3275},
      ISSN = {1064-8275,1095-7197},
   MRCLASS = {65C30 (60H35 65C35)},
  MRNUMBER = {4980425},
       DOI = {10.1137/23M1609142},
       URL = {https://doi-org.ezproxy.lb.polyu.edu.hk/10.1137/23M1609142},
}

@article {GanderVandewalle:2007,
	AUTHOR = {Gander, Martin J. and Vandewalle, Stefan},
	TITLE = {Analysis of the parareal time-parallel time-integration
	method},
	JOURNAL = {SIAM J. Sci. Comput.},
	FJOURNAL = {SIAM Journal on Scientific Computing},
	VOLUME = {29},
	YEAR = {2007},
	NUMBER = {2},
	PAGES = {556--578},
	ISSN = {1064-8275,1095-7197},
	MRCLASS = {65R20 (65L20)},
	MRNUMBER = {2306258},
	MRREVIEWER = {Neide\ Bertoldi\ Franco},
	DOI = {10.1137/05064607X},
	URL = {https://doi.org/10.1137/05064607X},
}

@book{thomee2007galerkin,
	AUTHOR = {Thom\'{e}e, Vidar},
	TITLE = {{Galerkin Finite Element Methods for Parabolic Problems}},
	EDITION = {Second},
	PUBLISHER = {Springer-Verlag, Berlin},
	YEAR = {2006},
	PAGES = {xii+370},
	ISBN = {978-3-540-33121-6; 3-540-33121-2},
	MRCLASS = {65-02 (65M15 65M60)},
	MRNUMBER = {2249024},
}

@book{martin-pint,
	author = {Gander, Martin J. and Lunet, Thibaut},
	title = {{Time Parallel Time Integration}},
	publisher = {SIAM, Philadelphia, PA},
	year = {2024},
	doi = {10.1137/1.9781611978025},
	edition   = {},
	doi = {10.1137/1.9781611978025},
}

@article {LionsMadayTurinici:2001,
	AUTHOR = {Lions, Jacques-Louis and Maday, Yvon and Turinici, Gabriel},
	TITLE = {R\'{e}solution d'{EDP} par un sch\'{e}ma en temps
	``parar\'{e}el''},
	JOURNAL = {C. R. Acad. Sci. Paris S\'{e}r. I Math.},
	FJOURNAL = {Comptes Rendus de l'Acad\'{e}mie des Sciences. S\'{e}rie I.
	Math\'{e}matique},
	VOLUME = {332},
	YEAR = {2001},
	NUMBER = {7},
	PAGES = {661--668},
	ISSN = {0764-4442},
	MRCLASS = {65M06 (65M20 65Y05)},
	MRNUMBER = {1842465},
	MRREVIEWER = {Dennis\ C.\ Jespersen},
	DOI = {10.1016/S0764-4442(00)01793-6},
	URL = {https://doi.org/10.1016/S0764-4442(00)01793-6},
}

@article{Dobrev:2017,
	AUTHOR = {Dobrev, V. A. and Kolev, Tz. and Petersson, N. A. and
	Schroder, J. B.},
	TITLE = {Two-level convergence theory for multigrid reduction in time
	({MGRIT})},
	JOURNAL = {SIAM J. Sci. Comput.},
	FJOURNAL = {SIAM Journal on Scientific Computing},
	VOLUME = {39},
	YEAR = {2017},
	NUMBER = {5},
	PAGES = {S501--S527},
	ISSN = {1064-8275},
	MRCLASS = {65M55 (65F10 65M22 65Y05)},
	MRNUMBER = {3716570},
	MRREVIEWER = {Benjamin Wi-Lian Ong},
	DOI = {10.1137/16M1074096},
	URL = {https://doi-org.ezproxy.lb.polyu.edu.hk/10.1137/16M1074096},
}

@incollection{gander201550,
	title={50 years of time parallel time integration},
	author={Gander, Martin J},
	booktitle={{Multiple Shooting and Time Domain Decomposition Methods: MuS-TDD, Heidelberg, May 6-8, 2013}},
	pages={69--113},
	year={2015},
	publisher={Springer}
}

@incollection{gander2008nonlinear,
	title={Nonlinear convergence analysis for the parareal algorithm},
	author={Gander, Martin J and Hairer, Ernst},
	booktitle={{Domain Decomposition Methods in Science and Engineering XVII}},
	pages={45--56},
	year={2008},
	publisher={Springer}
}

@article{WuZhou:2015,
	AUTHOR = {Wu, Shu-Lin and Zhou, Tao},
	TITLE = {Convergence analysis for three parareal solvers},
	JOURNAL = {SIAM J. Sci. Comput.},
	FJOURNAL = {SIAM Journal on Scientific Computing},
	VOLUME = {37},
	YEAR = {2015},
	NUMBER = {2},
	PAGES = {A970--A992},
	ISSN = {1064-8275},
	MRCLASS = {65L05 (65L20 65M20 65Y05 68Q60)},
	MRNUMBER = {3338002},
	MRREVIEWER = {Philip W. Sharp},
	DOI = {10.1137/140970756},
	URL = {https://doi-org.ezproxy.lb.polyu.edu.hk/10.1137/140970756},
}

@article{wu2015convergence2,
	title={Convergence analysis of some second-order parareal algorithms},
	author={Wu, Shu-Lin},
	journal={IMA J. Numer. Anal.},
	volume={35},
	number={3},
	pages={1315--1341},
	year={2015},
	publisher={OUP}
}

@incollection{bal2005convergence,
	title={On the convergence and the stability of the parareal algorithm to solve partial differential equations},
	author={Bal, Guillaume},
	booktitle={{Domain Decomposition Methods in Science and Engineering}},
	pages={425--432},
	year={2005},
	publisher={Springer}
}

@article{hessenthaler2020multilevel,
	title={Multilevel convergence analysis of multigrid-reduction-in-time},
	author={Hessenthaler, Andreas and Southworth, Ben S and Nordsletten, David and R\"ohrle, Oliver and Falgout, Robert D and Schroder, Jacob B},
	journal={SIAM J. Sci. Comput.},
	volume={42},
	number={2},
	pages={A771--A796},
	year={2020},
	publisher={SIAM}
}

@article{gander2013analysis,
	title={Analysis of two parareal algorithms for time-periodic problems},
	author={Gander, Martin J and Jiang, Yao-Lin and Song, Bo and Zhang, Hui},
	journal={SIAM J. Sci. Comput.},
	volume={35},
	number={5},
	pages={A2393--A2415},
	year={2013},
	publisher={SIAM}
}

@article{legoll2013micro,
	title={A micro-macro parareal algorithm: application to singularly perturbed ordinary differential equations},
	author={Legoll, Fr{\'e}d{\'e}ric and Lelievre, Tony and Samaey, Giovanni},
	journal={SIAM J. Sci. Comput.},
	volume={35},
	number={4},
	pages={A1951--A1986},
	year={2013},
	publisher={SIAM}
}

@book{suhubi2013functional,
	title={Functional analysis},
	author={Suhubi, Erdogan},
	year={2013},
	publisher={Springer, Dordrecht}
}

@article {Li:2024,
	AUTHOR = {Li, Guanglian},
	TITLE = {Wavelet-based edge multiscale parareal algorithm for
	subdiffusion equations with heterogeneous coefficients in a
	large time domain},
	JOURNAL = {J. Comput. Appl. Math.},
	FJOURNAL = {Journal of Computational and Applied Mathematics},
	VOLUME = {440},
	YEAR = {2024},
	PAGES = {115608, 24 pp.},
	ISSN = {0377-0427,1879-1778},
	MRCLASS = {65M70 (65T60)},
	MRNUMBER = {4660615},
	DOI = {10.1016/j.cam.2023.115608},
	URL = {https://doi.org/10.1016/j.cam.2023.115608},
}

@book{spencer2014asymptopia,
	title={Asymptopia},
	author={Spencer, Joel},
	year={2014},
	publisher={AMS, Providence, RI}
}

@article{nirenberg1966extended,
	AUTHOR = {Nirenberg, L.},
	TITLE = {An extended interpolation inequality},
	JOURNAL = {Ann. Scuola Norm. Sup. Pisa Cl. Sci. (3)},
	FJOURNAL = {Annali della Scuola Normale Superiore di Pisa. Classe di
	Scienze. Serie III},
	VOLUME = {20},
	YEAR = {1966},
	PAGES = {733--737},
	ISSN = {0391-173X},
	MRCLASS = {46.38},
	MRNUMBER = {208360},
	MRREVIEWER = {Richard\ Beals},
}

@book{adams2003sobolev,
	title={{Sobolev Spaces}},
	author={Adams, Robert A and Fournier, John JF},
	year={2003},
	publisher={Elsevier, Amsterdam}
}

@article{dai2013stable,
	title={Stable parareal in time method for first-and second-order hyperbolic systems},
	author={Dai, Xiaoying and Maday, Yvon},
	journal={SIAM Journal on Scientific Computing},
	volume={35},
	number={1},
	pages={A52--A78},
	year={2013},
	publisher={SIAM}
}

@article{pentland2023error,
	title={Error bound analysis of the stochastic parareal algorithm},
	author={Pentland, Kamran and Tamborrino, Massimiliano and Sullivan, Timothy John},
	journal={SIAM Journal on Scientific Computing},
	volume={45},
	number={5},
	pages={A2657--A2678},
	year={2023},
	publisher={SIAM}
}

@article{gander2019new,
	title={A new parareal algorithm for problems with discontinuous sources},
	author={Gander, Martin J and Kulchytska-Ruchka, Iryna and Niyonzima, Innocent and Scho\"{o}ps, Sebastian},
	journal={SIAM J. Sci. Comput.},
	volume={41},
	number={2},
	pages={B375--B395},
	year={2019},
	publisher={SIAM}
}

\end{document}